\definecolor{green}{RGB}{0,127,0}
\definecolor{red}{RGB}{105,89,205}
\DeclareMathOperator{\Aut}{Aut}
\DeclareMathOperator{\Hom}{Hom}
\DeclareMathOperator{\UT}{UT}
\DeclareMathOperator{\T}{T}
\DeclareMathOperator{\Stab}{Stab}
\DeclareMathOperator{\st}{st}
\DeclareMathOperator{\txtBohr}{{Bohr}}
\DeclareMathOperator{\txtdBohr}{{dBohr}}
\newcommand{\Bohr}[1]{{#1}^{\txtBohr}}
\newcommand{\dBohr}[1]{{#1}^{\txtdBohr}}
\DeclareMathOperator{\txtdef}{{def}}
\newcommand{\defi}[1]{{#1}_{\txtdef}}
\DeclareMathOperator{\tp}{{tp}}
\DeclareMathOperator{\cl}{{cl}}
\newcommand{\p}[1]{\widehat{#1}}
\newcommand{\R}{{\mathbb{R}}}
\newcommand{\Z}{{\mathbb{Z}}}
\newcommand{\N}{{\mathbb{N}}}
\newcommand{\Q}{{\mathbb{Q}}}
\newcommand{\F}{{\mathbb{F}}}
\newcommand{\ZZ}{{\bar{\Z}}}
\newcommand{\G}{{\bar{G}}}
\newcommand{\M}{{\bar{M}}}
\newcommand{\RR}{{\bar{R}}}
\newcommand{\Lsetp}[1]{\mathcal{L}_{set,{#1}}}
\newcommand{\LL}{\mathcal{L}}
\newcommand{\I}{\mathcal{I}}
\newcommand{\J}{\mathcal{J}}
\newcommand{\U}{\mathcal{U}}
\newcommand{\FF}[1]{\mathbb{F}_{#1}}
\newcommand{\Ra}{(R,+)}
\newcommand{\RRa}{(\RR,+)}
\newcommand{\RRaD}{(\RR,+)^{0}}
\newcommand{\RRaT}{(\RR,+)^{00}}
\newcommand{\RRaI}{(\RR,+)^{000}}
\newcommand{\RRiD}{\RR^{0}}
\newcommand{\RRiT}{\RR^{00}}
\newcommand{\RRiI}{\RR^{000}}
\newcommand{\ZZa}{(\ZZ,+)}
\newcommand{\ZZaD}{(\ZZ,+)^{0}}
\newcommand{\ZZaT}{(\ZZ,+)^{00}}
\newcommand{\ZZiT}{\ZZ^{00}}
\newcommand{\QM}[4]{  \begin{pmatrix} \begin{array}{c|c} {#1} & {#2} \\ \hline {#3} & {#4} \end{array}\end{pmatrix}}
\DeclareMathOperator{\topo}{{top}}
\theoremstyle{plain}
\newtheorem{theorem}{Theorem}
\numberwithin{theorem}{section}
\newtheorem{lemma}[theorem]{Lemma}
\newtheorem{fact}[theorem]{Fact}
\newtheorem{proposition}[theorem]{Proposition}
\newtheorem{remark}[theorem]{Remark}
\newtheorem{question}[theorem]{Question}
\newtheorem{corollary}[theorem]{Corollary}
\newtheorem{definition}[theorem]{Definition}
\newtheorem*{claim2}{Claim}
\theoremstyle{definition}
\newtheorem{example}[theorem]{Example}
\newtheorem*{remark2}{Remark}
\begin{document}

\title{Bohr compactifications of groups and rings}

\author{Jakub Gismatullin}

\address{(J. Gismatullin): Instytut Matematyczny Uniwersytetu Wroc{\l}awskiego, pl. Grunwaldzki 2/4, 50-384 Wroc{\l}aw, Poland \& Instytut Matematyczny Polskiej Akademii Nauk, ul. {\'S}niadeckich 8, 00-656 Warszawa, Poland \hspace{5mm}
ORCID: \href{https://orcid.org/0000-0002-4711-3075}{0000-0002-4711-3075}}

\email{jakub.gismatullin@uwr.edu.pl}

\thanks{\noindent The first author is supported by the Narodowe Centrum Nauki grants no.  2014/13/D/ST1/03491 and 2017/27/B/ST1/01467.}

\author{Grzegorz Jagiella}

\address{(G. Jagiella): Instytut Matematyczny Uniwersytetu Wroc{\l}awskiego, pl. Grunwaldzki 2/4, 50-384 Wroc{\l}aw, Poland \hspace{5mm}
ORCID: \href{https://orcid.org/0000-0002-5504-5260}{0000-0002-5504-5260}}

\email{grzegorz.jagiella@math.uni.wroc.pl}

\thanks{\noindent The second author is supported by the Narodowe Centrum Nauki grant no.  2018/31/B/ST1/00357.}

\author{Krzysztof Krupiński}

\address{ (K.\ Krupi\'{n}ski): Instytut Matematyczny Uniwersytetu Wroc{\l}awskiego, pl. Grunwaldzki 2/4, 50-384 Wroc{\l}aw, Poland \hspace{5mm}
ORCID: \href{https://orcid.org/0000-0002-2243-4411}{0000-0002-2243-4411}}

\email{Krzysztof.Krupinski@math.uni.wroc.pl}

\thanks{\noindent The third author is supported by the Narodowe Centrum Nauki grants no. 2016/22/E/ST1/00450 and 2018/31/B/ST1/00357.}


\keywords{Bohr compactification, Heisenberg group, group of upper unitriangular matrices, model-theoretic connected components}
\subjclass[2020]{03C98, 03C60, 20A15, 20G15, 16B70, 54H11, 03C45}
	
	\begin{abstract}
		We introduce and study model-theoretic connected components of rings as an analogue of model-theoretic connected components of definable groups. We develop their basic theory and use them to describe both the definable and classical Bohr compactifications of rings. We then use model-theoretic connected components to explicitly calculate Bohr compactifications of some classical matrix groups, such as the discrete Heisenberg group $\UT_3(\Z)$, the continuous Heisenberg group $\UT_3(\R)$, and, more generally, groups of upper unitriangular and invertible upper triangular matrices over unital rings.
	\end{abstract}
	
	\maketitle
	
	\tableofcontents
	
	\section*{Introduction}


The motivation for this research was the study of the \emph{model-theoretic connected components} of some matrix groups over unital rings in order to describe the classical \emph{Bohr compactifications} of these matrix  groups through the use of model theory.

Bohr compactifications of topological groups play an important role in topological dynamics and harmonic analysis, and they have some applications to differential equations. They allow to reduce many problems in the theory of almost periodic functions on topological groups to the corresponding problems about functions on compact groups. For example, see \cite{MR1120781, Pan}.
	
The model-theoretic connected components of a definable group $G$ (see Section \ref{section: preliminaries} for definitions) are among the fundamental objects used to study $G$ as a first-order structure. They are of particular significance in \emph{definable topological dynamics}, a generalization of classical topological dynamics. In \cite{GPP, KP}, the authors introduce and study the notion of the \emph{definable} Bohr compactification of a group $G$ definable in a first-order structure. This compactification is described in terms of one of the model-theoretic connected components of $G$. The classical Bohr compactification of a discrete group $G$ is a special case, and arises when $G$ is considered with the full set-theoretic structure (i.e. when every subset of $G$ is 0-definable). Also, the classical Bohr compactification of a topological group was described in  \cite{GPP, krupil} in terms of a suitably defined model-theoretic connected component.
	
	The calculation of model-theoretic connected components of matrix groups over a unital ring naturally led us to the development of the analogous notions of (model-theoretic) connected components of rings. These components were not studied so far and are interesting in their own right. 
In this paper, our first objective is to give precise definitions of various components of a ring (see Definition \ref{def:con} and the discussion following it), and prove some fundamental results about them such as Proposition \ref{prop:groupIdeal}, Proposition \ref{prop:con}, or Corollary \ref{cor:id}. In particular, we show in Proposition \ref{prop:con} that, as opposed to the group case, the appropriately defined $0$- and $00$-connected components of a unital ring always coincide. We also relate these components to the model-theoretic connected components of the additive group of the ring (see Corollaries \ref{cor:cor1_2}, \ref{cor:criteria}, the examples in Subsection \ref{subsection: rings vs groups}, and Proposition \ref{prop:R00Ideal}).
In Subsection \ref{subsection: Bohr comp of top. rings}, we observe that ring components can be used to describe the [definable] Bohr compactification of a discrete ring. 
In Subsection \ref{subsection:Bohr comp. top.}, we introduce a notion of a model-theoretic component for a topological ring and use it to describe the Bohr compactification of such a ring.  
Besides elementary algebraic and model-theoretic tools, also certain consequences of Pontryagin duality are involved in some arguments in the above part. All the  facts around Pontryagin duality which we need in this paper are discussed in the preliminaries.
	
Our original objective was to use 
model-theoretic connected components to explicitly compute both the definable and classical Bohr compactifications of some matrix groups. 
We focus on the groups $\UT_n(R)$ and $\T_n(R)$, where $R$ is a unital ring. We obtain a general description of the Bohr compactifications of these groups (see Propositions \ref{prop:utn00quot} and \ref{prop:tn00quot}). In the case of some classical rings, e.g. when $R$ is a field, or the ring of integers, or the ring of polynomials in several variables over an infinite field,
we get more precise descriptions, which in particular applies to the discrete Heisenberg group $\UT_3(\Z)$ (see general Corollary \ref{cor:utn00quot with dagger} and its applications in Subsection \ref{subsection: special rings}). We also adapt our approach to the groups $\UT_n(R)$ and $\T_n(R)$ treated as topological groups (for $R$ being a topological ring), obtaining descriptions of their classical Bohr compactifications, which in particular applies to the continuous Heisenberg group $\UT_3(\R)$ (see Propositions \ref{prop:utn00quot in topological case}, \ref{prop:tn00quottopo}, and Example \ref{example: top. field}).

Our method  of computing classical Bohr compactifications of the above matrix groups via model-theoretic connected components is novel, and, up to our knowledge, the descriptions of the Bohr compactifications which we obtained have not been known so far. 
	


As an example, let us state here our descriptions of the classical Bohr compactifications of both the discrete and continuous Heisenberg group.

The Bohr compactification of the discrete Heisenberg group $\UT_3(\Z)$ is 
	\[\begin{pmatrix}
			1 & \Bohr{\Z} & \hat{\Z}\\
			0 & 1 & \Bohr{\Z}\\
			0 & 0 & 1
			\end{pmatrix}:= 
\left\{ \begin{pmatrix}
			1 & a & b\\
			0 & 1 & c\\
			0 & 0 & 1
			\end{pmatrix} : a,c \in  \Bohr{\Z}, b \in \hat{\Z} \right\}, \]
where $\Bohr{\Z}$ is the Bohr compactification of the discrete group $(\Z,+)$, $\hat{\Z}$ is the profinite completion of $\Z$, and the product of two matrices from this set is defined as follows:

 \[\begin{pmatrix}
			1 & a & b\\
			0 & 1 & c\\
			0 & 0 & 1
			\end{pmatrix} 
 \begin{pmatrix}
			1 & \alpha & \beta\\
			0 & 1 & \gamma\\
			0 & 0 & 1
			\end{pmatrix} =
 \begin{pmatrix}
			1 & a+ \alpha& b+ \beta + \pi(a)\pi(\gamma)\\
			0 & 1 & c+\gamma\\
			0 & 0 & 1
			\end{pmatrix},\]
where $\pi \colon \Bohr{\Z} \to \hat{\Z}$ is a unique continuous group epimorphism compatible with the maps from $\Z$, provided by universality of  $\Bohr{\Z}$. More precisely, the Bohr compactification of $\UT_3(\Z)$ is the homomorphism from $\UT_3(\Z)$ to the above group of matrices which
is defined coordinatewise by the natural maps $\Z \to \Bohr{\Z}$ and $\Z \to \hat{\Z}$.

The Bohr compactification of the topological group $\UT_3(\R)$ is 
\[\begin{pmatrix}
			1 & \Bohr{\R} & 0\\
			0 & 1 & \Bohr{\R}\\
			0 & 0 & 1
			\end{pmatrix}:= 
\left\{ \begin{pmatrix}
			1 & a & 0\\
			0 & 1 & b\\
			0 & 0 & 1
			\end{pmatrix} : a,b \in  \Bohr{\R} \right\} \cong \Bohr{\R} \times \Bohr{\R}, \]
where $\Bohr{\R}$ is the Bohr compactification of the topological group $(\R,+)$. More precisely, the Bohr compactification of the topological group $\UT_3(\R)$ is the continuous homomorphism from  
$\UT_3(\R)$ to  $\Bohr{\R} \times \Bohr{\R}$ defined coordinatewise by the (Bohr compactification) map $\R \to \Bohr{\R}$.

	\section{Preliminaries}\label{section: preliminaries}
	In this paper, we use standard model-theoretic notations. We consider groups and rings as objects definable in some first-order structure $M$, and often assume the groups and rings themselves to be first-order structures in some language $\LL$ expanding the language of groups and rings, respectively.
	We always consider a structure $M$ together with a fixed $\kappa$-saturated and strongly $\kappa$-homogeneous elementary extension $\M \succ M$, where $\kappa > |M| + |\LL|$ is a strong limit cardinal. For a definable set $X \subseteq M$, we denote by $\bar{X}$ its interpretation in $\M$. We call a set $A \subseteq \M$ \emph{small} if $|A| < \kappa$. If $(G, \cdot, \ldots)$ is a definable group, we say that a subgroup $H \leq \G$ has \emph{bounded} index if $|\G/H| < \kappa$. For rings, by the \emph{index} of a subring we mean its index as a subgroup of the additive group of the ring.

Groups and rings will be often equipped with topology compatible with their operations. We make a blanket assumption that all topological groups, topological rings, and topological spaces in general that we consider in this paper are always Hausdorff (unless stated otherwise). We note however that most of the presented theory can be repeated, possibly with minor modifications, by requiring only compact spaces to be Hausdorff.

We will say that $M$ is equipped with the \emph{full (set-theoretic) structure} if all subsets of $M^{n}$, $n \in \N$, are 0-definable. 
The language of such a structure will be denoted by $\Lsetp{M}$. 
	
	If $G$ is a definable group in $M$, and $A\subset \bar M$ a small set, recall the following well-known subgroups of $\G$, so-called \emph{model-theoretic connected components}:
	\begin{compactitem}
		\item $\G^0_A$, the intersection of all $A$-definable subgroups of $\G$ with finite index,
		\item $\G^{00}_A$, the smallest $A$-type-definable subgroup of $\G$ with bounded index,
		\item $\G^{000}_A$, the smallest $A$-invariant subgroup of $\G$ with bounded index.
	\end{compactitem}
	We refer to \cite{modcon, BCG} for the properties of the connected components which we are going to use and explain below. Clearly $\G^{000}_A \leq \G^{00}_A \leq \G^0_A$. Sometimes (e.g. in theories with NIP), the group $\G^0_A$ does not depend on the choice of $A$, in which case we say that $\G^0 = \G^0_\emptyset$ exists, and similarly for the other components. Each component is a normal subgroup of $\G$. The quotients $\G/\G^{000}_A, \G/\G^{00}_A, \G/\G^{0}_A$ can be equipped with the \emph{logic topology} (where a set is closed if and only if its preimage under the quotient map is type-definable 
	over a small set of parameters), making them respectively a quasi-compact (i.e. not necessarily Hausdorff), a compact, and a profinite topological group. 
The same holds for the quotient of $\bar G$ by any normal subgroup of bounded index which is $A$-invariant, $A$-type-definable, or an intersection of some $A$-definable subgroups of $\G$, respectively.
	
	Let $G$ be a topological group. A \emph{compactification of $G$} is a compact topological group $K$ together with a continuous homomorphism $\phi\colon G \to K$ with dense image. The \emph{Bohr compactification of $G$} is a compactification $\phi\colon G \to K$ satisfying the following universal property: if $\phi'\colon G \to K'$ is a compactification of $G$, then $\phi' = f \circ \phi$ for a unique continuous homomorphism $f\colon K \to K'$. The Bohr compactification of $G$ always exists and is unique up to isomorphism. We will denote this object by $\Bohr{G}$. It is a classical notion in topological dynamics and harmonic analysis. It can be naturally extended to the category of topological rings, and other topological-algebraic objects, as done in \cite{Holm, HK}.
	
	The work in \cite{GPP} and \cite{KP} developed a model-theoretic version of Bohr compactifications. Let us briefly explain this setting. Suppose $X$ is a definable set and $C$ a compact topological space. Recall that a function $f \colon X \to C$ is said to be \emph{definable} if for each pair of disjoint, closed subsets $C_1, C_2 \subseteq C$, there are definable, disjoint subsets $U_1, U_2 \subseteq X$ such that $f^{-1}[C_i] \subseteq U_i$ for $i=1,2$. For a definable group $G$, we call its compactification $\phi \colon G \to K$ \emph{definable} if $\phi$ is definable. The results of \cite{GPP} show that a group $G$ definable in a model $M$ has the universal definable (called the \emph{definable Bohr}) compactification, which is just the quotient $\G/\G^{00}_M$ or rather the quotient homomorphism $G \to \G/\G^{00}_M$; 
we will denote it by $\dBohr{G}$. In the full (set-theoretic) setting $\Lsetp{M}$, $\dBohr{G}=\Bohr{G}$ (for $G$ treated as a discrete group), and the last result specializes to the following corollary.

	\begin{fact}\label{cor:gpp}
	Suppose $M$ is regarded in the language $\Lsetp{M}$ and $G$ is definable in $M$. Then $G \to \G/\G^{00}_M$ is the (classical) Bohr compactification of the discrete group $G$.
	\end{fact}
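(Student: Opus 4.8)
The plan is to deduce the statement directly from the description of the definable Bohr compactification recalled above from \cite{GPP}, the point being that in the full set-theoretic language the word ``definable'' becomes vacuous for the maps involved, so the universal definable compactification is simply the universal compactification.

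First I would recall that, by \cite{GPP}, the quotient homomorphism $\pi\colon G \to \G/\G^{00}_M$ is the universal \emph{definable} compactification $\dBohr{G}$ of $G$; in particular $\G/\G^{00}_M$ is a compact Hausdorff group in the logic topology and $\pi$ is in particular a definable compactification of the discrete group $G$. Along the way one should check the few purely topological points needed to see that $\pi$ is genuinely a compactification of $G$ in the classical topological-group sense: continuity of $\pi$ is automatic because $G$ is discrete, and the image $\pi[G]$ is dense because any closed subset $F \subseteq \G/\G^{00}_M$ containing $\pi[G]$ pulls back to an $M$-type-definable subset of $\G$ containing $G$, which by elementarity of $M \preceq \M$ must equal $\G$, forcing $F = \G/\G^{00}_M$.

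Next comes the key observation about $\Lsetp{M}$: every function $f\colon G \to C$ from (a definable subset) $G$ to a compact Hausdorff space $C$ is definable. Indeed, for disjoint closed $C_1, C_2 \subseteq C$ the preimages $U_i := f^{-1}[C_i]$ are disjoint subsets of $G \subseteq M^n$, hence $0$-definable in $\Lsetp{M}$, and trivially $f^{-1}[C_i] \subseteq U_i$. Consequently every compactification $\phi'\colon G \to K'$ of the discrete group $G$ is automatically a definable compactification.

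Finally I would combine the two: given an arbitrary compactification $\phi'\colon G \to K'$, it is a definable compactification by the previous step, so by the universal property of $\G/\G^{00}_M$ among definable compactifications (\cite{GPP}) there is a unique continuous homomorphism $f\colon \G/\G^{00}_M \to K'$ with $\phi' = f \circ \pi$. This is exactly the universal property characterizing $\Bohr{G}$, so by uniqueness of the Bohr compactification $\pi$ is the Bohr compactification of the discrete group $G$. I do not expect a genuine obstacle here; the only points requiring a little care are the verification that $\pi$ has dense image (hence is a compactification in the classical sense) and the bookkeeping that the universality quantifier from \cite{GPP} ranges over precisely the class of all compactifications once ``definable'' is vacuous in $\Lsetp{M}$.
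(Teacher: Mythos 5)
Your proof is correct and follows exactly the route the paper indicates: the paper states this as an immediate corollary of the result from \cite{GPP} that $G \to \G/\G^{00}_M$ is the universal definable compactification, together with the observation that in $\Lsetp{M}$ every function into a compact Hausdorff space is definable, so ``definable compactification'' coincides with ``compactification.'' You have simply filled in the details (density via the type-definability of closed-set preimages and elementarity, and vacuity of the definability condition) that the paper leaves implicit.
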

	 In this way, definable compactifications can be viewed as a generalization of classical ones.
	
	If $G$ is a locally compact abelian group, harmonic analysis provides a description of $\Bohr{G}$ 
	in terms of Pontryagin duality. Recall that the group $\Hom_c(G, S^1)$ of all continuous homomorphisms from $G$ into the circle group $S^1=\R/\Z=[-\frac{1}{2},\frac{1}{2})$ can be endowed with the compact-open topology, making it a locally compact abelian group. This object is called the \emph{Pontryagin dual} of $G$, which we denote by $\p{G}$: \[\p{G} = \Hom_c(G, S^1).\]

	\begin{fact}[\cite{Katz}, Chapter VII, Section 5]
		\label{fact:bohrPontryagin}
		Let $G$ be a locally compact abelian topological group. Then its Bohr compactification $\Bohr{G}$ is \[b\colon G\to  \Hom_c\left(\p{G}_{disc},S^1\right),\ \ g\mapsto (\varphi\mapsto \varphi(g)),\]
		where $\p{G}_{disc}$ denotes $\p{G}$ considered with the discrete topology. Moreover, still assuming that $G$ is a locally compact abelian group, the map $b$ is injective, that is for every $g\in G \setminus \{e\}$, there is $\varphi\in \p{G}$ such that $\varphi(g)\neq 0$.
	\end{fact}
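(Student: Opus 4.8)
The plan is to verify directly the universal property defining $\Bohr{G}$, with Pontryagin duality as the essential input. Write $K:=\Hom_c(\p{G}_{disc},S^1)$. Since $\p{G}_{disc}$ is a discrete abelian group, $K=\p{(\p{G}_{disc})}$ is a compact Hausdorff abelian group: it is a closed subgroup of $(S^1)^{\p{G}}$ with the product topology (the defining homomorphism conditions $f(x+y)=f(x)+f(y)$ are closed), and on $K$ the compact-open topology agrees with this product topology. The map $b$ is a homomorphism because each $\varphi\in\p{G}$ is one, and it is continuous because for every fixed $\varphi$ the coordinate $g\mapsto\varphi(g)$ of $b(g)$ is continuous, $\varphi$ being by definition a continuous character of $G$. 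Hence $b\colon G\to K$ is a continuous homomorphism into a compact Hausdorff abelian group.

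First I would show that $b(G)$ is dense in $K$. By Pontryagin duality for the discrete group $\p{G}_{disc}$ and its compact dual $K$, every character of $K$ has the form $\chi_\varphi\colon k\mapsto k(\varphi)$ for a unique $\varphi\in\p{G}$. Such a $\chi_\varphi$ vanishes on $b(G)$ exactly when $\varphi(g)=b(g)(\varphi)=0$ for all $g\in G$, i.e. only when $\varphi$ is trivial. Thus the annihilator of $\overline{b(G)}$ in $\p{K}$ is trivial, and the annihilator correspondence of Pontryagin duality forces $\overline{b(G)}=K$.

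Next I would establish the universal property. Let $\phi\colon G\to K'$ be any compactification of $G$; since $G$ is abelian and $\phi(G)$ is dense, $K'$ is abelian, so it is compact Hausdorff abelian, whence $\p{K'}$ is discrete and $K'\cong\p{(\p{K'})}$ by Pontryagin duality. The dual homomorphism $\p{\phi}\colon\p{K'}\to\p{G}$, viewed as a homomorphism of discrete abelian groups $\p{K'}\to\p{G}_{disc}$, dualizes to a continuous homomorphism $f:=\p{(\p{\phi})}\colon K=\p{(\p{G}_{disc})}\to\p{(\p{K'})}\cong K'$. That $f\circ b=\phi$ is a short computation with evaluations: for $g\in G$ and $\psi\in\p{K'}$ one gets $f(b(g))(\psi)=b(g)(\p{\phi}(\psi))=(\psi\circ\phi)(g)=\psi(\phi(g))$, which is precisely the image of $\phi(g)$ under the canonical isomorphism $K'\cong\p{(\p{K'})}$. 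Uniqueness of $f$ follows at once from the density of $b(G)$ in $K$ together with $K'$ being Hausdorff. Finally, the injectivity of $b$ is the statement that the continuous characters of $G$ separate its points, which is again part of Pontryagin duality for locally compact abelian groups (it is the injectivity of the canonical map $G\to\p{(\p{G})}$).

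The genuinely routine parts are the topological checks ($K$ compact, $b$ and $f$ continuous, $f$ unique) and the evaluation bookkeeping. The substantive input is entirely classical: the duality between compact and discrete abelian groups, the annihilator correspondence, and the reflexivity of locally compact abelian groups. Accordingly the only real obstacle is that one must import these theorems as black boxes; granting them, the verification of the universal property of $\Bohr{G}$ reduces to the straightforward manipulations sketched above.
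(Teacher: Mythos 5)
The paper does not give a proof of this statement; it is quoted as a black-box fact with a citation to [Katz], Chapter VII, Section 5, so there is no in-paper argument to compare against. Your proof is correct and is the standard route found in the literature on Pontryagin duality. You verify that $K=\Hom_c(\p{G}_{disc},S^1)$ is a compact Hausdorff abelian group (as the dual of a discrete group, realized as a closed subgroup of the compact group $(S^1)^{\p{G}}$ with the product topology, which on $K$ coincides with the compact-open topology); you show $b$ is a continuous homomorphism; you obtain density of $b(G)$ from the annihilator correspondence applied to the discrete group $\p{G}_{disc}$ and its compact dual $K$; and you produce the universal factoring map by dualizing an arbitrary compactification $\phi\colon G\to K'$ twice and invoking reflexivity of the compact abelian group $K'$, with uniqueness following from density of $b(G)$ and $K'$ being Hausdorff. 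The injectivity claim at the end is precisely the separation of points by continuous characters, i.e.\ the injectivity of the double-dual map $G\to\p{\p{G}}$, which you correctly attribute to Pontryagin duality. One small point you handle correctly but could spell out: $K'$ is abelian because the closure of an abelian subgroup of a Hausdorff topological group is abelian (the commutator map is continuous and its preimage of the identity is a closed set containing $\phi(G)$). There is no gap.
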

	
	Observe that $\Hom_c\left(\p{G}_{disc},S^1\right) = \Hom\left(\p{G}_{disc},S^1\right)$, since $\p{G}_{disc}$ has the discrete topology.

	

The next fact is the famous Pontryagin duality (see \cite[Theorem 24.2, p. 376]{HR}).
\begin{fact}\label{fact: Pontryagin dulaity for LCA}
If $G$ is a locally compact abelian group, then $G \cong \p{\p{G}}$ as topological groups, via $b\colon G \to \p{\p{G}}$ given by the same formula as in Fact \ref{fact:bohrPontryagin}.
\end{fact}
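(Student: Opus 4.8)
The plan is to prove that $b\colon G\to\p{\p G}$ is a topological isomorphism by first verifying the claim on a class of ``building blocks'' and then bootstrapping to an arbitrary locally compact abelian $G$ via the naturality of $b$ and the exactness of Pontryagin duality on short exact sequences of LCA groups. First I would record the formal properties of $b$: for $g\in G$ the evaluation $\varphi\mapsto\varphi(g)$ is a character of $\p G$, so $b$ is well defined; joint continuity of $(g,\varphi)\mapsto\varphi(g)$ together with the definition of the compact-open topologies on $\p G$ and on $\p{\p G}$ yields continuity of $b$; and $b$ is natural, i.e.\ $\p{\p f}\circ b_G=b_{G'}\circ f$ for every continuous homomorphism $f\colon G\to G'$. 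Injectivity of $b$ is the ``enough characters'' statement, which is already contained in Fact~\ref{fact:bohrPontryagin} (the composite $G\to\Bohr{G}\hookrightarrow\p{\p G}$ is injective, as a continuous character of $\p G$ is in particular an abstract character), so the real work is surjectivity and openness of $b$.

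Next I would dispatch the base cases by direct computation: $\p{\R}\cong\R$, $\p{\Z}\cong S^1$, $\p{S^1}\cong\Z$, and $\p{\Z/n\Z}\cong\Z/n\Z$, checking each time that $b$ is the evident isomorphism. Since the dual of a finite product is the product of the duals and $b$ respects this identification, the claim then holds for every ``elementary'' group $\R^n\times(S^1)^a\times\Z^b\times F$ with $F$ finite abelian. To reach an arbitrary compact group $K$ or discrete group $D$ I would pass to limits: a discrete abelian group is the directed union of its finitely generated subgroups, the functor $G\mapsto\p G$ converts this colimit into the inverse limit of the corresponding elementary duals, $b$ is compatible with these limits, and a short diagram chase promotes the isomorphism from the finitely generated pieces to $D$, and dually to $K=\p D$.

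For a general LCA group $G$ I would invoke the structure theory: picking a compact symmetric identity neighbourhood $U$, the subgroup $H$ generated by $U$ is open (hence closed) and compactly generated, and the structure theorem for compactly generated LCA groups gives $H\cong\R^n\times\Z^m\times K$ with $K$ compact, so the claim holds for $H$ by the previous step, while $G/H$ is discrete and is therefore also covered. I would then combine these by applying $G\mapsto\p G$ twice to the short exact sequence $0\to H\to G\to G/H\to 0$: restriction $\p G\to\p H$ is a topological quotient map with kernel $\p{G/H}$, so comparing the resulting two exact sequences through the natural transformation $b$ and invoking the five lemma in the category of topological groups gives that $b_G$ is a continuous bijection; openness of $b_G$ then follows because $H$ is open in $G$ and $b_H$, $b_{G/H}$ are already homeomorphisms.

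The main obstacle is not the diagram chasing but the two genuinely analytic or structural inputs it rests on. The first is the structure theorem expressing a compactly generated LCA group as $\R^n\times\Z^m\times(\text{compact})$. The second is the exactness of Pontryagin duality on short exact sequences — concretely, that for a closed subgroup $H\le G$ the restriction $\p G\to\p H$ is surjective and open: surjectivity requires extending a character of $H$ to $G$ (using divisibility of $S^1$ to extend it abstractly, then a Haar-measure averaging argument over $G/H$ to restore continuity), and openness requires a compactness argument. Granting these two inputs, everything else is bookkeeping with limits and the five lemma; this is precisely why the statement is quoted from \cite{HR} rather than reproved here.
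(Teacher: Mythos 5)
The paper does not prove this statement; it is quoted as a black-box fact with a citation to Hewitt--Ross (Theorem 24.2), so there is no "paper's own proof" to compare against. Your sketch is a correct outline of the standard proof and is essentially the route taken in the cited reference: reduce to elementary groups via the structure theorem for compactly generated LCA groups, pass to limits for discrete and compact groups, and glue along the open compactly generated subgroup $H\le G$ using the exactness of $\p{(\cdot)}$ on short exact sequences together with the five lemma. One logical caveat worth being aware of: you obtain injectivity of $b$ by appealing to Fact~\ref{fact:bohrPontryagin}, but the "enough characters to separate points" statement there is itself a nontrivial ingredient of the duality theorem (typically established via Gelfand--Raikov, or via Peter--Weyl plus structure theory); in a genuinely self-contained development you would need to establish it along the way rather than import it, so as not to smuggle in a piece of what you are trying to prove. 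Since the paper treats both Facts~\ref{fact:bohrPontryagin} and \ref{fact: Pontryagin dulaity for LCA} as cited external results, this circularity concern does not affect the paper, but it would matter if one actually wrote out your sketch in full.
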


 Recall that a \emph{profinite group} is an inverse limit of finite groups. The next fact is \cite[Theorem 2.9.6]{RZ}(b).

\begin{fact}
The Pontryagin dual of a profinite abelian group is a discrete, torsion abelian group. Conversely,
the Pontryagin dual of a discrete, torsion abelian group is a profinite abelian group.
\end{fact}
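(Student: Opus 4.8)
The plan is to prove the two implications separately; in each case the strategy is to reduce to \emph{finite} abelian groups, using the structure theory (a profinite group is the inverse limit of its finite quotients; a discrete torsion group is the directed union of its finite subgroups) together with the elementary self-duality of finite abelian groups and the two standard ``toggling'' facts of Pontryagin theory: the dual of a compact group is discrete, and the dual of a discrete group is compact.

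\textbf{Profinite $\Rightarrow$ discrete torsion.} Let $G$ be a profinite abelian group, so $G$ is compact and has a neighbourhood basis at the identity consisting of open subgroups of finite index. Given $\varphi \in \p{G} = \Hom_c(G,S^1)$, I would use that $S^1$ has no small subgroups: fix an open neighbourhood $U$ of $0$ in $S^1$ containing no nontrivial subgroup. By continuity $\varphi^{-1}[U]$ is open, so it contains an open finite-index subgroup $H \leq G$; then $\varphi(H)$ is a subgroup of $S^1$ contained in $U$, hence trivial, so $\varphi$ factors through the finite group $G/H$ and in particular has order dividing $|G/H|$. This shows $\p{G}$ is torsion. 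For discreteness, recall that a basic neighbourhood of $0$ in the compact-open topology has the form $W(K,U) = \{\varphi : \varphi(K) \subseteq U\}$ with $K \subseteq G$ compact; taking $K = G$ (permissible since $G$ is compact) and $U$ as above forces $\varphi(G)$, being a subgroup of $S^1$ inside $U$, to equal $\{0\}$, so $W(G,U) = \{0\}$ is open and $\p{G}$ is discrete.

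\textbf{Discrete torsion $\Rightarrow$ profinite.} Let $A$ be a discrete torsion abelian group, so $\p{A} = \Hom(A,S^1)$ with the compact-open topology, which, as $A$ is discrete, is the topology of pointwise convergence inherited from the product $(S^1)^A$. Write $A = \bigcup_i A_i$ as the directed union of its finitely generated subgroups; since $A$ is torsion, each $A_i$ is finite. Restriction of characters yields a topological isomorphism $\p{A} \cong \varprojlim_i \Hom(A_i, S^1)$, an inverse limit of finite abelian groups, so $\p{A}$ is profinite. (Equivalently: $\p{A}$ is a closed, hence compact, subgroup of $(S^1)^A$, being cut out by the closed homomorphism conditions, and the subgroups $\{\varphi : \varphi|_{A_i} = 0\}$ form a neighbourhood basis of $0$ consisting of open subgroups, so $\p{A}$ is compact and totally disconnected.)

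The points I would take care over are (i) the ``no small subgroups'' property of $S^1$ and the resulting fact that every continuous character of a profinite group is trivial on some open subgroup, and (ii) matching the compact-open topology on the relevant Hom-groups with, respectively, the discrete topology and the inverse-limit topology; everything else is a routine assembly of the finite case. I expect (ii) to be the only genuine technical obstacle.
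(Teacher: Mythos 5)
Your proof is correct. The paper itself gives no argument for this fact, citing it directly as \cite[Theorem~2.9.6]{RZ}(b), so there is no internal proof to compare against; your reduction to the finite case via the no-small-subgroups property of $S^1$ (for the forward direction) and the directed union of finite subgroups together with $\p{A} \cong \varprojlim_i \Hom(A_i,S^1)$ (for the converse) is a standard and complete treatment, matching how the result is usually established in the profinite-groups literature. One small point worth spelling out in the converse direction: the openness of $\{\varphi : \varphi|_{A_i}=0\}$ in your parenthetical alternative itself uses the no-small-subgroups property again (since $\varphi(A_i)\subseteq U$ forces $\varphi|_{A_i}=0$ when $A_i$ is finite), but your primary argument via the inverse limit sidesteps this cleanly because an inverse limit of finite discrete groups is automatically profinite.
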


From the last two facts, we get:

\begin{corollary}\label{cor: consequence of Pontryagin duality useful for us}
A discrete abelian group $G$ is torsion if and only if $\p{G}$ is profinite.
\end{corollary}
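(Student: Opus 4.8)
The plan is to derive this immediately from the two preceding facts, with Pontryagin duality doing the work in the nontrivial direction. For the forward implication, suppose $G$ is a discrete torsion abelian group. Then the penultimate fact (on duals of discrete torsion abelian groups) directly yields that $\p{G}$ is a profinite abelian group, and there is nothing more to check.

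For the converse, suppose $G$ is a discrete abelian group with $\p{G}$ profinite. Since $\p{G}$ is profinite abelian, the same fact applied in the other direction (duals of profinite abelian groups) tells us that $\p{\p{G}}$ is a discrete torsion abelian group. Now I invoke Pontryagin duality (Fact \ref{fact: Pontryagin dulaity for LCA}): as $G$ is discrete abelian it is in particular locally compact abelian, so the canonical map $b\colon G \to \p{\p{G}}$ is an isomorphism of topological groups. Since being torsion is preserved under group isomorphism, $G$ is torsion, as required.

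There is essentially no obstacle in this argument; it is a bookkeeping combination of the quoted results. The only minor points worth noting explicitly are that $\p{G}$ is automatically abelian (being a group of homomorphisms into $S^1$), that the hypothesis of Fact \ref{fact: Pontryagin dulaity for LCA} is met because discrete groups are locally compact, and that it is the double dual $\p{\p{G}}$ — identified with $G$ via $b$ — to which the fact about duals of profinite abelian groups is applied.
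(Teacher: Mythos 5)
Your proof is correct and is precisely the argument the paper intends when it says the corollary follows ``from the last two facts'': the forward direction is one half of Fact \ref{fact: Pontryagin dulaity for LCA}'s predecessor (duals of torsion discrete groups are profinite), and the converse applies the other half to $\p{G}$ and then transports the torsion property back along the isomorphism $G \cong \p{\p{G}}$ from Fact \ref{fact: Pontryagin dulaity for LCA}.
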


    We will need the following fact (e.g. see \cite[Theorem 3.3.14]{dik}) in our analysis of model-theoretic connected components. We give a short proof based on Pontryagin duality.

	\begin{fact}
		\label{fact:profiniteBohr}
		A discrete abelian group $G$ is of finite exponent if and only if $\Bohr{G}$  
		is profinite.
	\end{fact}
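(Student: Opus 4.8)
The plan is to prove both implications by reducing to the Pontryagin-dual picture furnished by Fact \ref{fact:bohrPontryagin} together with Corollary \ref{cor: consequence of Pontryagin duality useful for us}. Recall that Fact \ref{fact:bohrPontryagin} gives $\Bohr{G} = \Hom\big(\p{G}_{disc}, S^1\big) = \p{(\p{G}_{disc})}$, the Pontryagin dual of the discrete group underlying $\p{G}$. So $\Bohr{G}$ is profinite if and only if $\p{(\p{G}_{disc})}$ is profinite, which by Corollary \ref{cor: consequence of Pontryagin duality useful for us} happens if and only if the discrete abelian group $\p{G}_{disc}$ is torsion.

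Thus the statement reduces to the purely algebraic claim: a discrete abelian group $G$ has finite exponent if and only if $\p{G} = \Hom(G, S^1)$ is a torsion group. For the forward direction, if $nG = 0$ for some $n \geq 1$, then for every $\varphi \in \p{G}$ we have $n\varphi = 0$, so $\p{G}$ is torsion (indeed of exponent dividing $n$). For the converse, suppose $G$ does not have finite exponent. First I would observe that a torsion abelian group all of whose elements have order bounded by... no — rather, I would use that $\p{G}$ being torsion forces a uniform bound: since $G \cong \p{\p{G}}$ by Fact \ref{fact: Pontryagin dulaity for LCA} (here $G$ is discrete, hence locally compact), and by the first part applied to the discrete torsion group $\p{G}$ its dual $\p{\p{G}}$ has finite exponent, we would get that $G$ has finite exponent — but this presupposes $\p{G}$ has \emph{finite} exponent, not merely that it is torsion. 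So the real content is: a torsion discrete abelian group $\p{G}$ which arises as a dual need not a priori have finite exponent, and I must rule that out directly.

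The clean way, then, is: if $G$ has infinite exponent, produce a character of infinite order. Since $G$ has infinite exponent, either $G$ has elements of arbitrarily large (finite) order, or $G$ has an element of infinite order. In the second case pick $g$ of infinite order; the cyclic subgroup $\langle g\rangle \cong \Z$ admits the character $\tfrac{1}{3} \mapsto$ an irrational rotation — concretely a homomorphism $\langle g \rangle \to S^1$ with dense image, which has infinite order — and this extends to $G$ by divisibility of $S^1$ (injectivity of $S^1$ as a $\Z$-module), giving $\varphi \in \p{G}$ of infinite order. In the first case, for each $k$ choose $g_k \in G$ with $\mathrm{ord}(g_k) = n_k \to \infty$; extend a character of $\langle g_k\rangle$ of order $n_k$ to all of $G$ to get $\varphi_k \in \p{G}$ of order $n_k$, so $\p{G}$ has elements of unbounded order and in particular is not of finite exponent — but I need it non-torsion, which does not follow. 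To get genuine non-torsion I should instead build a single infinite-order character directly: if no element of $G$ has infinite order then $G$ is torsion of infinite exponent, so it contains, for infinitely many primes $p$ or for one prime $p$ with unbounded $p$-power orders, a subgroup $H$ with $\Hom(H, S^1)$ containing an element of infinite order (e.g. $H = \bigoplus_k \Z/n_k\Z$ maps onto the Prüfer-type target, or one uses that $H$ surjects onto a group whose dual is infinite and non-torsion). The main obstacle is exactly this step — cleanly producing an infinite-order character from an infinite-exponent torsion group — and I would handle it by choosing a countable subgroup $H \leq G$ of infinite exponent, noting $H$ surjects onto $\bigoplus_{k} \Z/n_k\Z$ with $n_k$ strictly increasing (or onto $\Z$), whose Pontryagin dual is $\prod_k \Z/n_k\Z$ (resp. $S^1$), which is not torsion; pull back a non-torsion character along $H \hookrightarrow G$ using injectivity of $S^1$ to extend, contradicting torsionness of $\p{G}$. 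This completes the converse and hence the proof.
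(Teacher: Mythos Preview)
Your reduction via Fact \ref{fact:bohrPontryagin} and Corollary \ref{cor: consequence of Pontryagin duality useful for us} is exactly the paper's setup, and your forward direction matches the paper's. The gap is in the converse, specifically in the torsion case.

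The claim that a countable abelian group $H$ of infinite exponent ``surjects onto $\bigoplus_{k}\Z/n_k\Z$ with $n_k$ strictly increasing (or onto $\Z$)'' is false. Take $H=\Z(p^\infty)$, the Pr\"ufer $p$-group: it is torsion of infinite exponent, every proper subgroup is finite cyclic, and every quotient is either trivial or $\Z(p^\infty)$ itself. So it neither surjects onto $\Z$ nor onto any infinite direct sum of finite cyclic groups, and it contains no such direct sum as a subgroup. Nonetheless $\Hom(\Z(p^\infty),S^1)\cong \Z_p$ is torsion-free, so infinite-order characters do exist; your construction simply does not reach them.

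The paper handles this step by invoking an external lemma (\cite[Lemma 4.9]{BCG}): if $G$ has infinite exponent, there is a homomorphism $G\to S^1$ with dense image, which is automatically of infinite order in $\p{G}$. More interestingly, you were one observation away from the paper's alternative argument (the Remark following the proof): you noted that your first idea ``presupposes $\p{G}$ has \emph{finite} exponent, not merely that it is torsion'', and abandoned it. But $\p{G}$ is \emph{compact} (the Pontryagin dual of a discrete group always is), and a torsion compact abelian group has finite exponent by the Baire category theorem: writing $K=\bigcup_n K[n]$ with $K[n]=\{k:nk=0\}$ closed, some $K[n]$ has interior, hence is open of finite index, and then $K$ has finite exponent. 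With that, $\p{G}$ has finite exponent, so $G\cong\p{\p{G}}$ does too, by your own forward direction. This Baire route is cleaner than constructing a character by hand and avoids the case analysis entirely.
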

	\begin{proof}
($\rightarrow$) Assume that  $G$ is of finite exponent. Then $\p{G}$ is also of finite exponent (as if $g^n=e$ for all $g \in G$, then $f^n(g)=f(g)^n=f(g^n) =0$ for any $f \in \p{G}$ and $g \in G$), so $\p{G}$ is a torsion abelian group. 
Therefore, by Fact \ref{fact:bohrPontryagin} and Corollary \ref{cor: consequence of Pontryagin duality useful for us}, $\Bohr{G}\cong \Hom\left(\p{G}_{disc},S^1\right)$ is profinite.

($\leftarrow$) Assume that  $\Bohr{G}$ is profinite. 
Then $\p{G}$ is torsion (again by Fact \ref{fact:bohrPontryagin} and Corollary \ref{cor: consequence of Pontryagin duality useful for us}). Suppose for a contradiction that $G$ is not of finite exponent. Then, by \cite[Lemma 4.9]{BCG}, there is a homomorphism from $G$ to $S^1$ with dense image. Such a homomorphism is an element of $\p{G}$ of infinite order, a contradiction.
%
	\end{proof}

\begin{remark2}
Alternatively, the implication ($\leftarrow$) can be obtained using the Baire category theorem and Fact \ref{fact: Pontryagin dulaity for LCA} in place of \cite[Lemma 4.9]{BCG}. First, observe that every torsion, compact abelian group $K$ has finite exponent. Indeed, by the Baire category theorem, for some $n \in \N_{>0}$ the closed subgroup $K[n]:=\{ k: k^n = e\}$ of $K$ is clopen and so of finite index; since $K$ is torsion and abelian, this implies that $K$ has finite exponent. 
Hence, since in our case $K: = \p{G}$ is torsion (by Fact \ref{fact:bohrPontryagin} and Corollary \ref{cor: consequence of Pontryagin duality useful for us}) and compact (by \cite[Proposition 2.9.1(b)]{RZ}), it has finite exponent. Therefore, since  $G \cong \p{\p{G}}$, we conclude that $G$ has finite exponent, too.
\end{remark2}

	By Fact \ref{fact:profiniteBohr} and Fact \ref{cor:gpp},
	we get the following

	\begin{corollary}
		\label{prop:bohrIfOnly}
		Let $G$ be an abelian group considered with the full structure. Then the Bohr compactification $\G/\G^{00}_G$ of $G$ is profinite if and only if $G$ has finite exponent.
	\end{corollary}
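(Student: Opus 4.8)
The plan is to read this corollary simply as the combination of the two facts already established in the preliminaries. By Fact \ref{cor:gpp}, since $G$ is considered with the full set-theoretic structure, the quotient homomorphism $G \to \G/\G^{00}_G$ is precisely the (classical) Bohr compactification $\Bohr{G}$ of the discrete group $G$. Hence the topological group $\G/\G^{00}_G$ is (canonically isomorphic to) $\Bohr{G}$. So the statement ``$\G/\G^{00}_G$ is profinite'' is equivalent to ``$\Bohr{G}$ is profinite''.

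Next I would invoke Fact \ref{fact:profiniteBohr}, which says exactly that for a discrete abelian group $G$, $\Bohr{G}$ is profinite if and only if $G$ has finite exponent. Chaining the two equivalences gives: $\G/\G^{00}_G$ is profinite $\iff$ $\Bohr{G}$ is profinite $\iff$ $G$ has finite exponent, which is the claim. Note that $G$ here is abelian (as assumed) and discrete (as it carries the full structure), so Fact \ref{fact:profiniteBohr} applies verbatim.

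There is essentially no obstacle: the proof is a two-line chase through Fact \ref{cor:gpp} and Fact \ref{fact:profiniteBohr}, the latter itself having been proved above via Pontryagin duality. The only point meriting a word of care is making sure the identification in Fact \ref{cor:gpp} is as topological groups (so that ``profinite'' transfers), which it is, since the definable Bohr compactification $\G/\G^{00}_G$ is equipped with the logic topology and Fact \ref{cor:gpp} identifies this with the classical Bohr compactification as a compactification, in particular as a topological group. Thus I would simply write: ``By Fact \ref{cor:gpp}, $\G/\G^{00}_G$ is the Bohr compactification of the discrete abelian group $G$, and by Fact \ref{fact:profiniteBohr} this is profinite if and only if $G$ has finite exponent.''
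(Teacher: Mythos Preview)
Your proposal is correct and matches the paper's approach exactly: the paper states the corollary as an immediate consequence of Fact \ref{fact:profiniteBohr} and Fact \ref{cor:gpp}, which is precisely the two-line chase you describe.
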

	\begin{corollary}
	If $G$ is an abelian group of finite exponent equipped with an arbitrary structure, then $\G/\G^{00}_A$ is profinite for any small set of parameters $A\subset \G$.
	\end{corollary}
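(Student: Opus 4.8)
The plan is to reduce the statement for an arbitrary structure on $G$ to the full-structure case already settled in Corollary~\ref{prop:bohrIfOnly}. Suppose $G$ is an abelian group of finite exponent, equipped with some language $\LL$, and fix a small set of parameters $A \subset \G$. Write $G_{\mathrm{set}}$ for the same group $G$ considered with the full set-theoretic structure $\Lsetp{G}$, and $\G_{\mathrm{set}}$ for its monster model. The key point is that $\G^{00}_A$, computed in the $\LL$-structure, contains the corresponding component $\G_{\mathrm{set}}{}^{00}_{G}$ of the full structure in the appropriate sense: every $A$-type-definable (in $\LL$) subgroup of bounded index is in particular a type-definable subgroup of bounded index in the full structure over the (small) set $G \cup A$, and the latter — since in the full structure a type-definable bounded-index subgroup over any small set is automatically type-definable over $\emptyset$ (all subsets being $0$-definable) — equals $\G_{\mathrm{set}}{}^{00}_{G}$. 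Hence there is a continuous surjective homomorphism $\G/\G^{00}_A \twoheadrightarrow \G_{\mathrm{set}}{}^{00}_{G}$-quotient... wait, the inclusion goes the other way.

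Let me restate: the correct direction is that $\G^{00}_A$ (in $\LL$) is a bounded-index type-definable subgroup, and by the remark above it is type-definable in the full structure over a small set, hence contains $\G_{\mathrm{set}}{}^{00}_{G}$ after identifying monster models appropriately. Therefore we get a continuous epimorphism $\G_{\mathrm{set}}/\G_{\mathrm{set}}{}^{00}_{G} \twoheadrightarrow \G/\G^{00}_A$ of compact Hausdorff groups, i.e. $\G/\G^{00}_A$ is a continuous quotient of $\Bohr{G}$ (the full-structure Bohr compactification of Fact~\ref{cor:gpp}). By Corollary~\ref{prop:bohrIfOnly}, $\Bohr{G}$ is profinite because $G$ has finite exponent; and a quotient of a profinite group by a closed subgroup is again profinite (an inverse limit of finite groups maps onto an inverse limit of finite images). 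Hence $\G/\G^{00}_A$ is profinite.

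The step I expect to require the most care is making the comparison of the two monster models precise: $\M$ for the $\LL$-structure and the monster model of the full structure are not literally the same object, so one should either work with a common expansion (take the full structure on $\M$ restricted suitably, or pass to a sufficiently saturated common extension) or argue directly on the level of quotients using the universal property. A clean route is: $\G^{00}_A$ has bounded index, so $G/(G \cap \G^{00}_A)$... — rather, the quotient map $G \to \G/\G^{00}_A$ is a compactification of the discrete group $G$, hence factors through the universal one $\Bohr{G} = \G_{\mathrm{set}}/\G_{\mathrm{set}}{}^{00}_{G}$ via a continuous epimorphism; this uses only Fact~\ref{cor:gpp} and the universal property of $\Bohr{G}$, sidestepping any explicit manipulation of saturated models. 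Combined with Corollary~\ref{prop:bohrIfOnly} and the stability of profiniteness under Hausdorff quotients, this finishes the proof.
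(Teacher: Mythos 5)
Your final \emph{clean route} is correct and is a genuinely (if mildly) different argument from the paper's. The paper first reduces to the case $A\subseteq G$, then works with a single monster model serving as a monster for both $\mathcal{L}$ and the full language $\mathcal{L}_{set,G}$: since $A\subseteq G$, every $\mathcal{L}_A$-type-definable bounded-index subgroup is $G$-type-definable, hence $\emptyset$-type-definable, in $\mathcal{L}_{set,G}$, so it contains $\G^{00}_\emptyset$ computed in the full language; this gives the quotient map $\G/\G^{00}_\emptyset \twoheadrightarrow \G/\G^{00}_A$. Your clean route reaches the same surjection by a softer argument: $G \to \G/\G^{00}_A$ is, by the logic topology, a compactification of the discrete group $G$, so by the universal property of $\Bohr{G}$ it factors through $\Bohr{G}$ via a continuous map which is necessarily onto (dense image, compact source); then Fact \ref{fact:profiniteBohr} and stability of profiniteness under Hausdorff quotients finish. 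This avoids touching saturated models at all, and in fact you do not even need Fact \ref{cor:gpp} --- only the classical universal property and Fact \ref{fact:profiniteBohr} are used. Both approaches work; yours buys you freedom from the common-monster bookkeeping, while the paper's is more in the spirit of directly comparing the type-definable components.

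One caution about your first attempt, which you rightly abandoned: the claim that ``in the full structure a type-definable bounded-index subgroup over any small set is automatically type-definable over $\emptyset$'' is false as stated. The full structure renders subsets of $G^n$ (the small base model) $0$-definable, not subsets of $\G^n$ defined over arbitrary small $A\subset\G$; if $A\not\subseteq G$, a $(G\cup A)$-type-definable set need not be $\emptyset$-type-definable. That is exactly what the paper's preliminary reduction ``we may assume $A\subseteq G$'' repairs (by enlarging the base model to a small elementary substructure of $\G$ containing $G\cup A$, which can only shrink $\G^{00}$). Your clean route sidesteps this entirely, which is one reason it is preferable.
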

	\begin{proof}
	We may assume that $A\subseteq G$. Let $\G$ be a monster model for both the full language and original language. Then $\G/\G^{00}_A$ is a topological quotient of $\G/\G^{00}_\emptyset$, where the former quotient is computed in the original language and the latter one in the full language. Since $\G/\G^{00}_\emptyset$ is a profinite group, so is $\G/\G^{00}_A$.
	\end{proof}
	

Another important consequence of Pontryagin duality is the following fact (see \cite[Proposition 5.1.2]{RZ}). Recall that under our assumptions, topological spaces are considered to be Hausdorff.

\begin{fact}\label{fact: profinite rings = compact rings}
A topological, unital ring is profinite if and only if it is compact.
\end{fact}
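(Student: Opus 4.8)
The plan is to prove Fact \ref{fact: profinite rings = compact rings} by reducing it to the already-established topological-group fact that profinite groups are exactly the compact, Hausdorff, totally disconnected ones, and then upgrading the totally disconnected condition to compactness alone by exploiting the ring multiplication. One direction is immediate: a profinite ring is by definition an inverse limit of finite (discrete) rings, hence an inverse limit of compact Hausdorff spaces, so it is compact and Hausdorff (and its underlying additive group is a profinite group).

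For the converse, suppose $R$ is a compact, Hausdorff, unital topological ring. The additive group $(R,+)$ is then a compact Hausdorff abelian group. The key step is to show that $(R,+)$ is \emph{totally disconnected}, for then it is a profinite abelian group, and one checks that the ring structure passes to the inverse limit, exhibiting $R$ as an inverse limit of finite rings. To get total disconnectedness, I would argue that the connected component $C$ of $0$ in $(R,+)$ is a closed ideal: it is a closed subgroup of $(R,+)$ by general topological group theory, and multiplication by any fixed $r \in R$ is a continuous additive endomorphism of $R$, hence maps the connected set $C$ into a connected set containing $0$, i.e.\ into $C$; thus $C$ is a two-sided ideal. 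The quotient ring $R/C$ is then a compact, Hausdorff, \emph{totally disconnected} unital ring, hence profinite by the classical structure theory. It remains to rule out $C \neq \{0\}$: if $C$ is a nonzero connected closed ideal containing no idempotent other than those forced on it, one uses that a compact ring has an identity and that $C$, being an ideal, would have to be ``small'' — concretely, $R = R/C \times$ (something) fails because $C$ need not be a direct summand, so the cleanest route is to invoke the known fact (attributable to Kaplansky, or deducible from the structure theory of compact rings) that a compact Hausdorff ring is totally disconnected; alternatively, observe that $C$ with the subspace topology is a connected compact ring without identity on which every element is a topological nilpotent-like obstruction, contradicting compactness via an idempotent-lifting / Peirce decomposition argument.

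Given the tools actually available in this paper, I expect the intended proof to simply cite \cite[Theorem 5.1.2]{RZ} or the analogous statement in the literature on profinite rings, since the equivalence ``profinite $\Leftrightarrow$ compact Hausdorff'' for rings is standard and its proof genuinely requires the Peirce-decomposition / idempotent-lifting machinery for compact rings rather than anything developed in the preliminaries here. So my proposed write-up is: \emph{($\Rightarrow$)} an inverse limit of finite rings is compact and Hausdorff; \emph{($\Leftarrow$)} reduce to the additive group, show the connected component of $0$ is a closed ideal so that the totally disconnected quotient is profinite, and then invoke the structure theory of compact rings to conclude the ideal is trivial.

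The main obstacle is exactly this last point: showing that a compact Hausdorff ring has no nontrivial connected closed ideal (equivalently, that its additive group is totally disconnected). Unlike the group case — where total disconnectedness is part of the hypothesis in the usual formulation — for rings one gets total disconnectedness for free, and proving this honestly needs either Kaplansky's theorem that a compact ring with identity is totally disconnected, or the Peirce decomposition relative to a suitable clopen subgroup together with idempotent lifting modulo the (closed) radical. I therefore anticipate the proof in the paper is a one-line citation to \cite{RZ}, and I would write it that way rather than reproducing the structure theory.
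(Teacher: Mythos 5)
Your conclusion — that the paper gives no proof of its own and simply cites \cite[Theorem 5.1.2]{RZ} — is exactly right. Your supplementary sketch also correctly isolates the one nontrivial point (that a compact Hausdorff unital ring is automatically totally disconnected, i.e.\ the connected component of $0$, which you rightly note is a closed two-sided ideal, must be trivial) and correctly observes that this requires Kaplansky-style structure theory not developed in the paper's preliminaries, which is precisely why the paper defers to the literature.
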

	Throughout the rest of the paper, $R = (R, +, \cdot, 0, 1, \ldots)$ is a (not necessarily commutative) unital ring, possibly with an additional structure, $\RR \succ R$ is a $\kappa$-saturated and strongly  $\kappa$-homogeneous elementary extension of $R$, and $A \subset \RR$ a small set of parameters. 
More generally, one can consider a ring $R$ which is $0$-definable in a structure $M$. 
We assume that $R$ is unital in order to proceed more smoothly in some proofs, apply Fact \ref{fact: profinite rings = compact rings}, or talk about the groups of invertible upper triangular matrices, but many definitions and observations work without this assumption, which will be mentioned in some places. However, an important consequence of Fact \ref{fact: profinite rings = compact rings}, namely the equality of the components in Proposition \ref{prop:con}(iv), requires unitality (see the discussion after Question \ref{quest: 000=00}).

Whenever we consider an ideal $I$ of a ring $R$, we will specify whether we mean a left, right, or two-sided ideal; except the cases where the (unital) ring $R$ is commutative.

	\section{Model-theoretic connected components of rings}

    \subsection{General theory}
	We define the following model-theoretic connected components of $\RR$ in a way analogous to model-theoretic connected components of groups. 
	\begin{definition}\phantomsection\label{def:con}
		\begin{enumerate}[(i)]
			\item $\RRiD_{A,ring}$ is the intersection of all $A$-definable subrings of $\RR$ with finite index.
			\item $\RRiT_{A,ring}$ is the smallest $A$-type-definable subring of $\RR$ with bounded index.
			\item $\RRiI_{A,ring}$ is the smallest $A$-invariant subring of $\RR$ with bounded index.
			\item $\RRiD_{A,ideal}, \RRiT_{A,ideal}, \RRiI_{A,ideal}$ are defined correspondingly using two-sided ideals instead of subrings.	
			\item We set $\RRiD_{A} := \RRiD_{A,ideal}$, $\RRiT_{A} := \RRiT_{A,ideal}$, $\RRiI_{A} := \RRiI_{A,ideal}$.
		\end{enumerate}
		If $\RRiD_{A,ring}$ does not depend on the choice of $A$, then we write $\RR^0_{ring} = \RR^0_{\emptyset, ring}$ and say that $\RR^0_{ring}$ exists. We do analogously for the remaining objects.
	\end{definition}


The existence of all these components (over the fixed set $A$) is clear and in fact the index of the smallest one (i.e. $\RRiI_{A,ring}$) in $\bar R$ is bounded by $2^{|\mathcal{L}| +|A|}$, as the relation of lying in the same coset of $\RRiI_{A,ring}$ is coarser than the finest bounded, $A$-invariant equivalence relation (i.e. the relation of having the same Lascar strong type over $A$) which is well-known to have at most $2^{|\mathcal{L}| +|A|}$ classes. 

	The following inclusions are obvious:
	\begin{center}
	\begin{tikzcd}
	\RRiI_{A,ring} \arrow[r, phantom, "\subseteq"] \arrow[d, phantom, sloped, "\subseteq"] & \RRiT_{A,ring} \arrow[r, phantom, "\subseteq"] \arrow[d, phantom, sloped, "\subseteq"] & \RRiD_{A,ring} \arrow[d, phantom, sloped, "\subseteq"]\\
	\RRiI_{A,ideal} \arrow[r, phantom, "\subseteq"] & \RRiT_{A,ideal} \arrow[r, phantom, "\subseteq"] & \RRiD_{A,ideal}
	\end{tikzcd}
	\end{center}
In fact, we prove in Proposition \ref{prop:con}(i)-(iii) that the components of the top row of the diagram coincide with the respective components of the bottom row. That is, there is no need to distinguish between the ring components and ideal components, which justifies item (v) of Definition \ref{def:con}. We moreover prove in Proposition \ref{prop:con}(iv) that the components $\RRiD_{A,ring}$ and $\RRiT_{A,ring}$ also coincide in any (unital) ring. This means that among the defined components there are only at most two distinct ones, and we leave as a question whether they coincide (see Question \ref{quest: 000=00}). We will keep distinguishing the components from the diagram until after Proposition \ref{prop:con} is proven.


The following example shows that (similarly to the group 00-component) the component $\RRiT_{R,ideal}$ can be thought of as a generalization of the kernel of the standard part map in the sense that it coincides with this kernel in a certain class of compact rings.
	\begin{example}
		If $R$ is a compact topological ring with a basis of neighborhoods of $0$ consisting of definable sets, and all definable subsets of $R$ have the Baire property, then $\RRiT_{R,ideal} = \ker(\st)$, where $\st \colon \RR \to R$ is the ``standard part'' map, and $\RR/\RRiT_{R,ideal} \cong R$. In particular,  this applies to the ring $\Z_p$ of $p$-adic integers in the (pure) language of rings.
	\end{example}

\begin{proof}
Let $\mu$ be the two-sided ideal of $\RR$ consisting of the infinitesimals, that is the intersection of the $\bar U$'s with $U$ ranging over all definable neighborhoods of $0$.
It is well-known that compactness of $R$ yields a well-defined group (in fact, also ring) homomorphism $\st \colon \RR \to R$ defined by $\st(r): =r'$ for a unique $r' \in R$ with $r-r' \in \mu$; moreover, $\ker(\st) = \mu$ and $\RR /\ker(\st) \cong R$ which is of bounded size. Therefore, $\RRiT_{R,ideal}\subseteq \ker(\st)$. It remains to show that $\ker(\st) \subseteq \RRiT_{R,ring}$. Write $\RRiT_{R,ring}$ as the intersection of some $R$-definable sets $\bar P_i$, $i \in I$, such that for every $i$ there is $j$ with $\bar P_j - \bar P_j \subseteq \bar P_i$. Then each $P_i$ (computed in $R$) is generic (that is some finitely many additive translates of $P_i$ cover $R$), and so, by compactness of $R$, each $P_i$ is non-meager. Since each $P_i$ has also the Baire property, we conclude from Pettis theorem (see \cite[Theorem 9.9]{Kec}) that each $P_i -P_i$ is a neighborhood of $0$. By the choice of the $P_i$'s, for every $i$ there is $j$ such that $P_j-P_j \subseteq P_i$, so we conclude that each $P_i$ is a definable neighborhood of $0$. Hence, $\ker(\st) = \mu \subseteq  \RRiT_{R,ring}$. 

Thus, we get the induced (abstract) isomorphism from $\RR/\RRiT_{R,ideal}$ to $R$. To see that it is a homeomorphism, it is enough to show that it is continuous (as both rings are compact). For this we need to check that $\st^{-1}[F]$ is type-definable for any closed subset $F$ of $R$. Note that $F=\bigcap_{r \in R \setminus F} U_r^c$ for some choice of definable neighborhood $U_r$ of $r$ (which exists by assumption). So it is enough to check that $\st^{-1}[F] = \mu + \bigcap_{r \in R \setminus F} \bar U_r^c$ which is clearly type-definable, where $\bar U_r$ is the interpretation of $U_r$ in $\bar R$. This is left for the reader.

The fact that the assumptions are satisfied for the ring $\Z_p$ follows from quantifier elimination in $\Q_p$ in Macintyre's language and the definability of $\Z_p$ in $\Q_p$ (see \cite{Bel}).
\end{proof}
	
		Consider the action of the \emph{monoid} $(\RR, \cdot)$ on $\RR$ by \emph{left} multiplication. For any $r \in \RR$, the map $f_r \colon \RR \to \RR$ given by $f_r(x) := r \cdot x$ is an endomorphism of the additive group $(\RR,+)$.
	For $X \subseteq \RR$, define its \emph{setwise stabilizer} as \[\Stab_\RR(X): = \left\{r \in \RR: r \cdot X \subseteq X\right\}.\] For any $X$, $\Stab_\RR(X)$ is a submonoid of $(\RR, \cdot)$. Moreover, if $X=G \leq (\RR,+)$ is a subgroup, then $\Stab_\RR(G)$ is a subring of $\RR$.
Further, if $X=S \subseteq \RR$ is a subring, then $\Stab_\RR(S)$ is the largest subring of $\RR$ in which $S$ is a left ideal; it is known as the \emph{left idealizer} of $S$ in $\RR$ \cite[p. 121]{good}.
	
	We similarly consider the action of $(\RR, \cdot)$ on $\RR$ by \emph{right} multiplication and denote the setwise stabilizer of $X$ under this action by $\Stab'_\RR(X):=\left\{r \in \RR: X \cdot r \subseteq X\right\}$.
	
	\begin{lemma}
		\label{lem:groupIdealLR}
		Let $G \leq (\RR, +)$ be a subgroup with bounded index such that $\Stab_\RR(G)$ [respectively $\Stab'_\RR(G)$] has bounded index.
		\begin{enumerate}[(i)]
			\item If $G$ is $A$-type-definable, then $G$ contains an $A$-type-definable left [respectively right] ideal of $\RR$ with bounded index.
			\item If $G$ is $A$-invariant, then $G$ contains an $A$-invariant left [respectively right] ideal of $\RR$ with bounded index.		
		\end{enumerate}
	\end{lemma}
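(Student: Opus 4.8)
The plan is to take as the required left ideal the \emph{largest left ideal of $\RR$ contained in $G$}, namely $I := \{x \in \RR : \RR x \subseteq G\}$, the bracketed ``right'' variant being handled symmetrically with $I' := \{x \in \RR : x\RR \subseteq G\}$. First I would dispose of the purely algebraic content: since $1 \in \RR$ we have $I \subseteq G$; since each $f_r$ is an additive endomorphism, $I$ is closed under addition and negation; and since $r(sx) = (rs)x$ for $r, s \in \RR$, we have $\RR I \subseteq I$, so $I$ is indeed a left ideal. The real work is therefore to show that $I$ has bounded index and inherits $A$-type-definability (resp.\ $A$-invariance) from $G$.

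The bounded-index statement is where the hypothesis on $S := \Stab_\RR(G)$ enters. As recalled before the lemma, $S$ is a subring, in particular an additive subgroup, of $\RR$, and by assumption $[\RR : S] < \kappa$; fix a set $\{c_i : i < \lambda\}$ of representatives of the cosets of $S$ in $(\RR, +)$, with $\lambda < \kappa$. The key claim is that $I = G \cap \bigcap_{i < \lambda} f_{c_i}^{-1}(G)$. The inclusion $\subseteq$ is immediate by applying the defining condition of $I$ to $r = 1$ and to $r = c_i$; for $\supseteq$, given $x \in G$ with $c_i x \in G$ for all $i$ and an arbitrary $r = c_i + s$ with $s \in S$, we get $rx = c_i x + sx \in G$, since $sx \in G$ (as $s \in S$ and $x \in G$). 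Now each $f_{c_i}^{-1}(G)$ is an additive subgroup, and the map $\RR/f_{c_i}^{-1}(G) \to \RR/G$, $x + f_{c_i}^{-1}(G) \mapsto c_i x + G$, is well defined and injective, so $f_{c_i}^{-1}(G)$ has index at most $[\RR : G] < \kappa$; since $G$ also has bounded index, $I$ is an intersection of at most $1 + \lambda < \kappa$ additive subgroups of bounded index. As $\kappa$ is a strong limit cardinal, $\RR/I$ then injects into a product of fewer than $\kappa$ sets each of size $< \kappa$, whence $[\RR : I] < \kappa$.

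For part (i), write $G$ as the set of realizations of a partial type $\pi(y)$ over $A$. Then $x \in I$ iff $\RR \models \varphi(rx)$ for every $r \in \RR$ and every $\varphi \in \pi$; interchanging the two universal quantifiers, this says exactly that $x$ realizes the partial type $\{\, \forall r\, \varphi(r \cdot y) : \varphi \in \pi \,\}$, all of whose formulas have parameters from $A$ only, so $I$ is $A$-type-definable. For part (ii), assume $G$ is $A$-invariant; for $\sigma \in \Aut(\RR/A)$ and $x \in I$ we have, for every $r \in \RR$, $r\,\sigma(x) = \sigma\big(\sigma^{-1}(r)\,x\big) \in \sigma(G) = G$, so $\sigma(x) \in I$, i.e.\ $I$ is $A$-invariant. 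Together with the previous paragraph this gives (i) and (ii); the right-multiplication versions follow by the obvious symmetric argument, with $\Stab'_\RR$ in place of $\Stab_\RR$, right ideals in place of left ideals, and $xr$ in place of $rx$.

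The step carrying all the weight is the bounded-index claim: a priori $I = \bigcap_{r \in \RR} f_r^{-1}(G)$ is an intersection over the \emph{unbounded} set $\RR$, and intersecting unboundedly many subgroups of bounded index may well produce a subgroup of unbounded index. The hypothesis that $\Stab_\RR(G)$ has bounded index is precisely what permits the reduction to the boundedly many representatives $c_i$; granted that reduction, the definability and invariance parts are routine.
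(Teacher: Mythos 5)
Your proof is correct and follows the paper's strategy. Both take $I := \{x \in \RR : \RR x \subseteq G\}$ (the paper's $J(G)$), show it is the largest left ideal contained in $G$, and use the bounded index of $S := \Stab_\RR(G)$ to reduce the a priori unbounded intersection $\bigcap_{r\in\RR} f_r^{-1}[G]$ to one over a small set of coset representatives. The one place where you genuinely diverge is the type-definability argument for (i): the paper first shows that $J(G)$ is type-definable over $A$ together with a fixed set of coset representatives of $S$ (using the same observation about cosets you use for the index bound), and then appeals to $A$-invariance of $J(G)$ to upgrade this to $A$-type-definability. You instead exhibit $I$ directly as the set of realizations of the partial type $\{\forall r\,\varphi(r\cdot y) : \varphi \in \pi\}$, which is over $A$ from the outset. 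This is a bit cleaner, avoids the auxiliary fact that an $A$-invariant, type-definable set is $A$-type-definable, and also makes transparent that $A$-type-definability of the largest left ideal inside $G$ does not use the stabilizer hypothesis at all; that hypothesis is needed only for the bounded-index claim. The cardinality step (a product of $<\kappa$ sets each of size $<\kappa$ has size $<\kappa$, using that $\kappa$ is a strong limit) and the injection $\RR/f_{c_i}^{-1}[G] \hookrightarrow \RR/G$ are both correct and match what the paper asserts more tersely as ``uniformly bounded index.''
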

	\begin{proof}
		(i) We give a proof for left ideals. Let $J(G) := \bigcap_{r \in \RR} f_r^{-1}[G]$; it is the largest left ideal of $\RR$ contained in $G$. We claim that $J(G)$ has the desired properties.
		It is clearly $A$-invariant. We may write $J(G)$ as $\bigcap_{r \in \RR} (f_r^{-1}[G] \cap G)$.
		\begin{claim2}
			If $r_1, r_2 \in \RR$ satisfy $r_1 - r_2 \in \Stab_\RR(G)$, then $f_{r_1}^{-1}[G] \cap G = f_{r_2}^{-1}[G] \cap G$.
		\end{claim2}
		\begin{proof}[Proof of Claim]
			It is sufficient to show that $f_{r_1}^{-1}[G] \cap G \subseteq f_{r_2}^{-1}[G]$. Write $r_2 = r_1 + a$ for some $a \in \Stab_\RR(G)$, and consider any $r' \in f_{r_1}^{-1}[G] \cap G$. Then $r_1r' \in G$ and $ar' \in G$, so $r_2 r' = r_1r' + ar' \in G$, and therefore $r' \in f_{r_2}^{-1}[G]$. This proves the claim.
		\end{proof}
		As $f_{r}^{-1}[G] \cap G$ depends only on the $\Stab_\RR(G)$-coset of $r$, $J(G)$ can be written as the intersection of a small number of type-definable sets over the same small set of parameters (namely $A$ together with a fixed set of representatives of the $\Stab_\RR(G)$-cosets), so $J(G)$ is type-definable. Since $J(G)$ is $A$-invariant, it is in fact $A$-type-definable. 
Since the subgroups  $f_{r}^{-1}[G]$, $r \in \RR$,  of the additive group of $\RR$ have uniformly bounded index,  an intersection of a small number of such subgroups is also a subgroup of bounded index. Hence, $J(G)$ has bounded index.

		(ii) follows by a similar argument.
	\end{proof}

A key point in what follows is the trivial observation below that the assumption that the index of the stabilizer is bounded is always satisfied when $G$ is a bounded index subring of $\RR$.

\begin{remark}\label{rem: trivial remark}
Let $S$ be a subring of $\RR$. Then $S \subseteq \Stab_\RR(S)$ and $S \subseteq \Stab'_\RR(S)$. Thus, if $S$ has bounded index, so do $\Stab_\RR(S)$ and $\Stab'_\RR(S)$.
\end{remark} 
	
	A standard observation about the connected components of groups is that each component has only boundedly many conjugates, so it must contain their intersection. In Lemma \ref{lem:groupIdealLR}, we instead used the assumption on the index of the stabilizer.
	%
Interestingly, the assumption that the index of the left [or right] stabilizer is bounded is sufficient to find a two-sided ideal instead of just one-sided one, as proved in the proposition below.
	\begin{proposition}\label{prop: two-sided ideal is contained}
		\label{prop:groupIdeal}
		Let $G \leq (\RR, +)$ be a subgroup with bounded index such that either $\Stab_\RR(G)$ or $\Stab'_\RR(G)$ has bounded index.
		\begin{enumerate}[(i)]
			\item If $G$ is $A$-type-definable, then $G$ contains an $A$-type-definable two-sided ideal of $\RR$ with bounded index.
			\item If $G$ is $A$-invariant, then $G$ contains an $A$-invariant two-sided ideal of $\RR$ with bounded index.	
		\end{enumerate}
	\end{proposition}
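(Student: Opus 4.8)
The plan is to bootstrap from Lemma~\ref{lem:groupIdealLR}, which already produces a one-sided ideal, by iterating the idealizer construction on the other side. Suppose without loss of generality that $\Stab_\RR(G)$ has bounded index (the case of $\Stab'_\RR(G)$ is symmetric). First I would apply Lemma~\ref{lem:groupIdealLR}(i) (or (ii) in the invariant case) to obtain $L := J(G) = \bigcap_{r \in \RR} f_r^{-1}[G]$, the largest left ideal of $\RR$ contained in $G$; it is $A$-type-definable (resp. $A$-invariant) and of bounded index. Now $L$ is a subring of $\RR$ of bounded index, so by Remark~\ref{rem: trivial remark} its right stabilizer $\Stab'_\RR(L)$ has bounded index as well. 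Hence I can apply the right-handed version of Lemma~\ref{lem:groupIdealLR} to the subgroup $L \leq (\RR,+)$: since $L$ has bounded index and $\Stab'_\RR(L)$ has bounded index, $L$ contains an $A$-type-definable (resp. $A$-invariant) right ideal $I := J'(L) = \bigcap_{r\in\RR} f'^{-1}_r[L]$ of $\RR$ of bounded index.

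The key step is then to verify that $I$ is automatically a \emph{two-sided} ideal. It is a right ideal by construction, so it remains to see that $\RR \cdot I \subseteq I$. The point is that $I \subseteq L$ and $L$ is a left ideal of $\RR$, so for any $s \in \RR$ and $x \in I$ we have $s x \in L$; moreover $I$ is a right ideal, so $x r \in I \subseteq L$ for all $r$, and hence $(sx) r = s(xr) \in s L \subseteq L$ — this shows $sx \in \bigcap_{r\in\RR} f'^{-1}_r[L] = I$. Thus $\RR I \subseteq I$, and $I$ is the desired two-sided ideal contained in $G$ (since $I \subseteq L \subseteq G$), of bounded index, and with the required definability/invariance.

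I expect the main (modest) obstacle to be purely bookkeeping: making sure that applying the one-sided lemma twice keeps the parameter set small and the index bounded. For the index: $L$ is an intersection of a small family of finite-index-bounded subgroups $f_r^{-1}[G]$ of $(\RR,+)$, hence of bounded index, exactly as in the proof of Lemma~\ref{lem:groupIdealLR}; and then the second application, to $L$ in place of $G$, is governed in the same way by the bounded index of $\Stab'_\RR(L)$, which Remark~\ref{rem: trivial remark} guarantees since $L$ is a subring. For the parameters: $L$ is $A$-invariant and type-definable, hence $A$-type-definable, and then $I$ is $A$-invariant (built canonically from $L$) and type-definable over $A$ together with a small set of representatives of the $\Stab'_\RR(L)$-cosets, hence again $A$-type-definable by $A$-invariance. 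The invariant case (ii) is identical with ``type-definable'' deleted throughout. I would close by remarking that one could equally well take $I$ to be the largest two-sided ideal of $\RR$ contained in $G$, namely $\bigcap_{r,s\in\RR} f_r^{-1}[\,f'^{-1}_s[G]\,]$, and run the same coset-counting argument directly; the two-step presentation above is just cleaner to write.
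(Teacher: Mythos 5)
Your proof is correct, and it uses the same engine as the paper — Lemma~\ref{lem:groupIdealLR} applied twice, with Remark~\ref{rem: trivial remark} as the bridge that makes the second application legitimate — but the bookkeeping at the end is genuinely different. The paper introduces the \emph{smallest} $A$-type-definable left ideal $I_l$ and right ideal $I_r$ of bounded index, applies the lemma to each of them (each being a subring, its opposite-sided stabilizer has bounded index) to show each contains a one-sided ideal of the other kind, and then invokes minimality to force $I_l = I_r$; this equality simultaneously supplies two-sidedness. You instead work with the \emph{largest} objects inside $G$: first $L := J(G)$, the largest left ideal in $G$, then $I := J'(L)$, the largest right ideal in $L$, and then close with a short direct calculation — $(sx)r = s(xr) \in sL \subseteq L$ — showing that $I$ is automatically two-sided. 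That little algebraic observation (the largest right ideal contained in a left ideal is two-sided) is a nice extra that the paper's minimality argument sidesteps. Both routes are essentially the same length; yours has the advantage of being constructive in the sense that the witnessing ideal is given by an explicit formula $\bigcap_{r,s\in\RR} f_r^{-1}\bigl[f'^{-1}_s[G]\bigr]$, as you note at the end, while the paper's produces the canonical object $\RRiT_{A,\mathrm{ideal}}$ directly and thereby proves Corollary~\ref{cor:id} in passing.
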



    \begin{proof}
(i)	Let $I_l$ and $I_r$ be the smallest $A$-type-definable left and right, respectively, ideals in $\RR$ with bounded index. By Remark \ref{rem: trivial remark} applied to $S:=I_l$, we see that $\Stab'_\RR(I_l)$ has bounded index. Thus, by Lemma \ref{lem:groupIdealLR}, $I_l$ contains $I_r$. In the same way, $I_r$ contains $I_l$. Hence, $I_l=I_r = \RRiT_{A,ideal}$ is a two-sided ideal.

Now, suppose that $\Stab_\RR(G)$ has bounded index (the case when  $\Stab'_\RR(G)$ has bounded index is similar). Then, by Lemma \ref{lem:groupIdealLR}, $G$ contains $I_l$, so we are done by the conclusion of first paragraph of this proof.		
		
(ii) The argument is again similar.
	\end{proof}

	We are now able to prove that some of the connected components introduced in Definition \ref{def:con} are actually equal.
	
	\begin{proposition}\phantomsection\label{prop:con}
		\begin{enumerate}[(i)]
			\item $\RRiI_{A,ring} = \RRiI_{A,ideal}$,
			\item $\RRiT_{A,ring} = \RRiT_{A,ideal}$, 
			\item $\RRiD_{A,ring} = \RRiD_{A,ideal}$,
			\item $\RRiD_{A,ring} = \RRiT_{A,ring}$.
		\end{enumerate}
	\end{proposition}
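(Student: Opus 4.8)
The plan is to prove the four equalities by leveraging Proposition~\ref{prop:groupIdeal} together with Remark~\ref{rem: trivial remark}, and then a separate averaging/finite-index argument for part (iv). For (i), (ii), and (iii), the inclusion ``$\supseteq$'' is trivial since every two-sided ideal is in particular a subring, so the corresponding ring-component is contained in the ideal-component. For the reverse inclusion, take $G$ to be $\RRiI_{A,ring}$ (resp. $\RRiT_{A,ring}$, $\RRiD_{A,ring}$). This $G$ is a subring of $\RR$ of bounded index (in cases (i),(ii) directly; in case (iii) it is an intersection of finitely-indexed definable subrings, hence of bounded index). By Remark~\ref{rem: trivial remark}, $\Stab_\RR(G)$ has bounded index, so the hypothesis of Proposition~\ref{prop:groupIdeal} is met. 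Hence $G$ contains an $A$-invariant (resp. $A$-type-definable, resp.\ again $A$-type-definable) two-sided ideal $I$ of $\RR$ of bounded index; thus $\RRiI_{A,ideal} \subseteq I \subseteq G = \RRiI_{A,ring}$ (resp. the analogous chain in case (ii)).

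The one subtlety is case (iii): Proposition~\ref{prop:groupIdeal}(i) produces only a \emph{type-definable} two-sided ideal $I$ inside $G = \RRiD_{A,ring}$, not a definable one, so it does not immediately give $\RRiD_{A,ideal} \subseteq \RRiD_{A,ring}$. To fix this, I would instead observe that $\RRiD_{A,ring}$ is itself an intersection of finitely many $A$-definable subrings of finite index (or handle one such subring $S$ at a time): for each $A$-definable finite-index subring $S$, the finitely many additive translates $r_1 + S, \dots, r_n + S$ covering $\RR$ give rise, via the maps $f_r$, to the two-sided ideal $J := \bigcap_{r,s \in \RR} f_r^{-1}[S] \cap f'^{-1}_s[S] \cap S$; since $f_r^{-1}[S] \cap S$ depends only on the $\Stab_\RR(S)$-coset of $r$ and $\Stab_\RR(S) \supseteq S$ has \emph{finite} index (as $S$ does), this intersection is actually over finitely many $r$, and similarly for the right side, so $J$ is a \emph{finite} intersection of $A$-definable finite-index subgroups, hence an $A$-definable two-sided ideal of finite index contained in $S$. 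Intersecting over the finitely many $S$'s defining $\RRiD_{A,ring}$ shows $\RRiD_{A,ideal} \subseteq \RRiD_{A,ring}$, giving (iii).

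For (iv), $\RRiD_{A,ring} = \RRiT_{A,ring}$: the inclusion $\RRiT_{A,ring} \subseteq \RRiD_{A,ring}$ is obvious. For the converse it suffices to show that $\RRiT_{A,ring}$ (equivalently, by (ii), $\RRiT_{A,ideal}$) is an intersection of $A$-definable subrings of finite index, equivalently that the compact Hausdorff ring $\RR/\RRiT_{A,ideal}$ is profinite. This is where I expect the real content to lie. The quotient $\bar R / \RRiT_{A,ideal}$ is a compact Hausdorff topological ring by the logic topology, hence profinite \emph{as a ring} by Fact~\ref{fact: profinite rings = compact rings} --- but one must be careful that profiniteness of this quotient as a topological ring is exactly what lets us write $\RRiT_{A,ideal}$ as an intersection of clopen (hence, by compactness and the logic topology, $A$-definable) finite-index two-sided ideals, and then these pull back to $A$-definable finite-index subrings of $\RR$ whose intersection is $\RRiT_{A,ideal}$, so $\RRiD_{A,ring} \subseteq \RRiT_{A,ideal} = \RRiT_{A,ring}$.

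The main obstacle is the passage from ``compact Hausdorff'' to a genuinely useful profinite structure in part (iv) and the bookkeeping that clopen subsets of the logic-topology quotient correspond to $A$-definable sets upstairs: one needs that a clopen set in $\RR/\RRiT_{A,ideal}$ has type-definable (over a small set, hence over $A$ after an invariance argument) preimage which is also the complement of a type-definable set, forcing it to be definable by compactness/saturation. The remaining steps --- the trivial inclusions, and the finite-index counting that keeps everything definable in case (iii) --- are routine once Proposition~\ref{prop:groupIdeal} and Remark~\ref{rem: trivial remark} are in hand.
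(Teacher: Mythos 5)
Your arguments for (i), (ii), and (iv) are essentially the paper's: (i)--(ii) use Remark~\ref{rem: trivial remark} and Proposition~\ref{prop:groupIdeal}, and (iv) uses Fact~\ref{fact: profinite rings = compact rings} to show $\RR/\RRiT_{A,ideal}$ is profinite, pulls back a clopen basis of ideals to definable finite-index ideals, and then makes them $A$-definable by an $\Aut(\RR/A)$-averaging argument (your ``invariance argument''). That part is fine.

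Your direct argument for (iii), however, has a genuine gap: the set $J := \bigcap_{r,s} f_r^{-1}[S] \cap (f'_s)^{-1}[S] \cap S$ is the intersection $J_l \cap J_r$ of the largest left ideal $J_l$ and the largest right ideal $J_r$ contained in $S$, and this is \emph{not} a two-sided ideal in general. Concretely, if $x \in J_l \cap J_r$ and $a \in \RR$, then $ax \in J_l$, but for $ax \in J_r$ one would need $a(xs) \in S$ for all $s$; since $xs$ merely lies in $S$ (a subring, not an ideal), $a(xs)$ can escape $S$ when $a \notin \Stab_\RR(S)$. The correct single-$S$ construction must be iterated: first form $J_r$ (the largest right ideal in $S$, which is an $A$-definable finite-index \emph{subring}, hence has $\Stab_\RR(J_r) \supseteq J_r$ of finite index), then form the largest left ideal $J'_l$ inside $J_r$; one checks that $J'_l$ is automatically also a right ideal (because $r(xa)=(rx)a \in J_r$ for $x\in J'_l$, using that $J_r$ is a right ideal), so $J'_l$ is an $A$-definable finite-index two-sided ideal contained in $S$. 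This is exactly the mechanism the paper uses in its non-unital variant (Lemma~\ref{lemma: 0 for non-unital rings}). But in fact you do not need any of this: the paper derives (iii) \emph{from} (iv), via the chain $\RRiD_{A,ideal} \subseteq \RRiT_{A,ideal} = \RRiT_{A,ring} \subseteq \RRiD_{A,ring} \subseteq \RRiD_{A,ideal}$, where the first inclusion is precisely what your (iv) argument establishes (that $\RRiT_{A,ideal}$ is an intersection of $A$-definable finite-index ideals). Since you already prove (iv) correctly, you could have obtained (iii) for free by noting this chain collapses to equalities, rather than giving the separate (and as written incorrect) argument.
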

	
	\begin{proof}
		Items (i) and (ii) follow from Remark \ref{rem: trivial remark} and Proposition \ref{prop:groupIdeal} applied to $G=S:=\RRiI_{A,ring}$ and $G=S:=\RRiT_{A,ring}$, respectively.

We prove (iii) and (iv). Since the quotient ring $\RR/\RRiT_{A,ideal}$ is compact, it is profinite by Fact \ref{fact: profinite rings = compact rings}, so there is a basis of neighborhoods of $0$ that consists of clopen two-sided ideals. Let $\pi \colon \RR \to \RR/\RRiT_{A,ideal}$ be the quotient map. We have
		\[\RRiT_{A,ideal} = \bigcap\left\{\pi^{-1}[\I]: \I \text{ is a clopen two-sided ideal of } \RR/\RRiT_{A,ideal}\right\}.\]
		Consider a clopen two-sided ideal $\I$ of $\RR/\RRiT_{A,ideal}$. Both $\J :=\pi^{-1}[\I]$ and its complement are type-definable, hence definable. 
%
Also, $\J$ has finite index.
Since $\RRiT_{A,ideal} \leq \J$, the orbit of $\J$ under $\Aut(\RR/A)$ is bounded and so finite by definability of $\J$. Thus, $\bigcap_{f \in \Aut(\RR/A)} f[\J]$ is $A$-definable with finite index. 
This shows that $\RRiT_{A,ideal}$ is an intersection of $A$-definable two-sided ideals with finite index, and therefore 
$\RRiT_{A,ideal} = \RRiD_{A,ideal}$. In particular, $\RRiT_{A,ring} \subseteq \RRiD_{A,ring} \subseteq \RRiD_{A,ideal} = \RRiT_{A,ideal} = \RRiT_{A,ring}$ (where the last equality holds by (ii)), and so we get (iii) and (iv).
	\end{proof}
	We now adopt the notation from item (v) of Definition \ref{def:con}; that is, we write $\RRiI_A$ for $\RRiI_{A,ideal} (= \RRiI_{A,ring})$, $\RRiT_A$ for $\RRiT_{A,ideal} (= \RRiT_{A,ring})$, and $\RRiD_A$ for $\RRiD_{A,ideal} (= \RRiD_{A,ring})$.

	Proposition \ref{prop:con} establishes that $\RRiT_A = \RRiD_A$ regardless of the first-order structure of $R$. This is in stark contrast to the case of groups. The key difference is that due to Pontryagin duality, every (unital) compact topological ring is necessarily profinite, hence totally disconnected (which forces $\RR/\RRiT_A$ and $\RR/\RRiD_A$ to be the same object). The analogous statement is not true for groups. In particular, by Corollary \ref{prop:bohrIfOnly}, given an abelian group $G$ with infinite exponent considered with the full structure, the (compact) quotient $\bar G / \bar{G}^{00}_G$ is not profinite; it follows that $\bar{G}^{00}_G \neq \bar{G}^{0}_G$. A concrete instance of this case is $(\Z, +)$, discussed in more detail in Example \ref{ex:Z}. Another counterexample is the circle group $S := S^1(\mathbb{R})$ defined in an o-minimal expansion of $\mathbb{R}$. We have $\bar{S}^{0}_\emptyset = \bar{S}$, but $\bar{S}^{00}_\emptyset \neq \bar{S}^{0}_\emptyset$ as it consists of the infinitesimal elements of $\bar{S}$.
	
	Regarding the components $\RRiT_{A}$ and $\RRiI_{A}$, let us write explicitly what we have observed in the first paragraph of the proof of Proposition \ref{prop: two-sided ideal is contained}.
	
	\begin{corollary} \phantomsection\label{cor:id}
		\begin{enumerate}[(i)]
			\item $\RRiT_{A}$ is the smallest left and the smallest right $A$-type-definable ideal of $\RR$ with bounded index.
			\item $\RRiI_{A}$ is the smallest left and the smallest right $A$-invariant ideal of $\RR$ with bounded index.
		\end{enumerate}
	\end{corollary}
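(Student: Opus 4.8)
The plan is to note that Corollary~\ref{cor:id} merely makes explicit what was already established in the first paragraph of the proof of Proposition~\ref{prop:groupIdeal}; so essentially no new work is required, and it only remains to spell out the minimality statements carefully.

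For part (i): let $I_l$ and $I_r$ be the smallest $A$-type-definable left and right, respectively, ideals of $\RR$ with bounded index (their existence is clear by the same soft counting as for the components of Definition~\ref{def:con}). Applying Remark~\ref{rem: trivial remark} with $S := I_l$ shows that $\Stab'_\RR(I_l)$ has bounded index, so the version of Lemma~\ref{lem:groupIdealLR}(i) for right ideals, applied to $G := I_l$, produces an $A$-type-definable right ideal of bounded index contained in $I_l$; minimality of $I_r$ then gives $I_r \subseteq I_l$. Symmetrically, using $\Stab_\RR(I_r)$ and the version of Lemma~\ref{lem:groupIdealLR}(i) for left ideals, $I_l \subseteq I_r$. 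Hence $I_l = I_r =: I$. Being simultaneously a left and a right ideal, $I$ is two-sided; and since every $A$-type-definable two-sided ideal of $\RR$ with bounded index is in particular an $A$-type-definable left ideal with bounded index, it contains $I_l = I$, so $I = \RRiT_{A,ideal} = \RRiT_A$. This is exactly the content of (i).

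For part (ii): one runs the identical argument with ``invariant'' replacing ``type-definable'' throughout, using Lemma~\ref{lem:groupIdealLR}(ii) (in its left and its right versions) together with Remark~\ref{rem: trivial remark} to supply the boundedness of the relevant stabilizers. One concludes that the smallest $A$-invariant left ideal and the smallest $A$-invariant right ideal of $\RR$ with bounded index coincide, that this common object is two-sided, and that it equals $\RRiI_{A,ideal} = \RRiI_A$.

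I do not expect a genuine obstacle here: the argument is inherited wholesale from Proposition~\ref{prop:groupIdeal}. The only points needing a moment's attention are bookkeeping: Remark~\ref{rem: trivial remark} supplies exactly the hypothesis on the index of the stabilizer that Lemma~\ref{lem:groupIdealLR} demands, and one must invoke the correct flavour of minimality at each step (the smallest one-sided ideal of bounded index embeds into every one-sided---hence into every two-sided---such ideal), after which the identifications $\RRiT_A = \RRiT_{A,ideal}$ and $\RRiI_A = \RRiI_{A,ideal}$ recorded just above finish the proof.
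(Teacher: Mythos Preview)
Your proposal is correct and follows exactly the paper's approach: the paper itself states that Corollary~\ref{cor:id} merely records what was shown in the first paragraph of the proof of Proposition~\ref{prop:groupIdeal}, and your write-up spells out precisely that argument (defining $I_l$, $I_r$, applying Remark~\ref{rem: trivial remark} and Lemma~\ref{lem:groupIdealLR} in both directions to get $I_l = I_r = \RRiT_A$, and likewise for the invariant case).
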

	

	\begin{question}\label{quest: 000=00} 
		Is $\RRiI_A = \RRiT_A \,(=\RRiD_A)$? Equivalently, is $\RR/\RRiI_A$ always profinite?
	\end{question}

This question is strongly related to some problems concerning our computation of the type-definable connected component of unitriangular groups, which will be discussed in Section \ref{sec:unig} after Question \ref{quest: finitely many steps in the sequence I_i}. In particular, see Lemma \ref{lem: conditions equivalent to 000=00} for equivalent statements.

We conclude this subsection with a discussion  on what happens if we drop the assumption that $R$ is unital. First, observe that this assumption is not needed in Lemma \ref{lem:groupIdealLR}, Remark \ref{rem: trivial remark} and Proposition \ref{prop:groupIdeal}
(working with $J(G) \cap G$ in place of $J(G)$ in the proof). However,
the assumption that $R$ is unital was used in the proofs of Proposition
\ref{prop:con} (iii) and (iv). 
Nevertheless, it turns out that (iii) holds also for non-unital rings, which is explained below, whereas (iv) fails in general: to see it, start from any abelian group $(R,+,\dots)$ for which $(\bar R,+)^{00}_A \ne (\bar R,+)^0_A$ and turn it into a (non-unital) ring with the trivial multiplication. Then the above additive group components coincide with the respective ring components, so $\RRiT_{A} = (\bar R,+)^{00}_A \ne (\bar R,+)^0_A= \RRiD_{A}$.

The proofs of Lemma \ref{lem:groupIdealLR}, Remark \ref{rem: trivial remark} and Proposition \ref{prop:groupIdeal} can be easily adapted to yield the following lemma.

\begin{lemma}\label{lemma: 0 for non-unital rings}
	Let $R$ be any (not necessarily unital) ring. Let $G \leq (\RR, +)$ be an $A$-definable subgroup with finite index
	such that $\Stab_\RR(G)$ [respectively $\Stab'_\RR(G)$] has finite index.
	Then:
	\begin{enumerate}[(i)]
		\item $G$ contains an $A$-definable left
		[respectively right] ideal of $\RR$ with finite index;
		\item if $S$ is a finite index subring of $\RR$,
		then $\Stab_\RR(S)$ and $\Stab'_\RR(S)$ are both of finite index;
		\item $G$ contains the intersection of all
		$A$-definable left ideals of finite index and also the intersection of all
		$A$-definable right ideals of finite index, and these two intersections
		coincide and form a two-sided ideal.
	\end{enumerate}
\end{lemma}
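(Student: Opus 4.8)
The plan is to prove Lemma \ref{lemma: 0 for non-unital rings} by adapting the proofs of Lemma \ref{lem:groupIdealLR}, Remark \ref{rem: trivial remark}, and Proposition \ref{prop:groupIdeal} to the ``finite index / $A$-definable'' setting, and then deriving Proposition \ref{prop:con}(iii) for non-unital rings as a consequence. Note that unitality was only genuinely needed where we invoked Fact \ref{fact: profinite rings = compact rings} (in the proof of \ref{prop:con}(iii)--(iv)); Lemma \ref{lem:groupIdealLR}, Remark \ref{rem: trivial remark}, and Proposition \ref{prop:groupIdeal} never used it, as already noted in the text (one works with $J(G) \cap G$ in place of $J(G)$). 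So the task is mostly bookkeeping: replace ``bounded index'' by ``finite index'', ``type-definable'' by ``definable'', and check the key finiteness passes through.

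\medskip

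For (i): mimic the proof of Lemma \ref{lem:groupIdealLR}(i). Given $A$-definable $G \leq (\RR,+)$ of finite index with $\Stab_\RR(G)$ of finite index, put $J(G) := \bigcap_{r \in \RR} f_r^{-1}[G]$, the largest left ideal contained in $G$ (for non-unital $R$ one should really take $J(G) \cap G$, but since $\Stab_\RR(G)$ has finite index, in particular $\RR \cdot \RR$-translates behave well and the same Claim applies). The Claim from the proof of Lemma \ref{lem:groupIdealLR} shows $f_r^{-1}[G] \cap G$ depends only on the $\Stab_\RR(G)$-coset of $r$; since $\Stab_\RR(G)$ now has \emph{finite} index, this is an intersection of \emph{finitely many} $A$-definable finite-index subgroups, hence an $A$-definable left ideal of finite index. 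For (ii): this is exactly Remark \ref{rem: trivial remark} with ``bounded'' replaced by ``finite'' --- $S \subseteq \Stab_\RR(S)$ and $S \subseteq \Stab'_\RR(S)$ for any subring $S$, so finite index of $S$ forces finite index of both stabilizers; no change to the argument is needed.

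\medskip

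For (iii): run the argument of Proposition \ref{prop:groupIdeal}(i) in this finite setting. Let $I_l$ (resp.\ $I_r$) be the intersection of all $A$-definable finite-index left (resp.\ right) ideals of $\RR$; these are the natural finite-index analogues and exist as finite intersections need not be an issue --- actually here we intersect possibly infinitely many, but each is $A$-definable of finite index, so the orbit-count / compactness argument used for $\RRiD$ in Definition \ref{def:con} shows $I_l$, $I_r$ are well-defined (and $A$-invariant). By (ii) applied to $S := I_l$, $\Stab'_\RR(I_l)$ has finite index, so by (i) applied with the roles of left/right swapped, $I_l$ contains an $A$-definable finite-index right ideal, hence $I_r \subseteq I_l$; symmetrically $I_l \subseteq I_r$. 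Thus $I_l = I_r$ is a two-sided ideal, and for any $A$-definable finite-index subgroup $G$ with $\Stab_\RR(G)$ of finite index, (i) gives an $A$-definable finite-index left ideal inside $G$, hence $I_l \subseteq G$; this yields the claim. Finally, to recover Proposition \ref{prop:con}(iii) for non-unital $R$: by definition $\RRiD_{A,ideal} \subseteq \RRiD_{A,ring}$, while conversely every $A$-definable finite-index subring $S$ contains, by (i)--(ii), an $A$-definable finite-index two-sided ideal (namely $I_l$), so $\RRiD_{A,ideal} \subseteq S$; intersecting over all such $S$ gives $\RRiD_{A,ideal} \subseteq \RRiD_{A,ring}$, whence equality.

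\medskip

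I do not expect a serious obstacle here --- the content is entirely in translating ``bounded/type-definable'' to ``finite/definable''. The one point needing slight care is that without a $1$, one must consistently work with $J(G) \cap G$ (not $J(G)$) to keep things inside $G$, exactly as flagged in the excerpt; and one should double-check that the hypothesis ``$\Stab_\RR(G)$ has finite index'' (rather than just bounded index) is strong enough to make the intersection in the definition of $J(G)$ a \emph{finite} intersection of $A$-definable sets, which it is, since $f_r^{-1}[G] \cap G$ only depends on the finitely many $\Stab_\RR(G)$-cosets. Note also that (iv) of Proposition \ref{prop:con} genuinely fails for non-unital rings --- e.g.\ a ring with zero multiplication, where every subgroup is an ideal, so $\RRiD$ and $\RRiT$ reduce to the additive-group components $\RR^0$ and $\RR^{00}$, which can differ --- so no attempt should be made to salvage it.
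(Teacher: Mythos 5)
Your parts (i) and (ii) are fine and match the paper's argument exactly: the finitely many cosets of $\Stab_\RR(G)$ make $J(G)\cap G$ a finite intersection of $A$-definable finite-index subgroups, and (ii) is verbatim Remark~\ref{rem: trivial remark}.

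Part (iii), however, has a genuine gap. You apply (ii) with $S:=I_l$ to conclude that $\Stab'_\RR(I_l)$ has finite index, and then apply (i) to $I_l$. But $I_l$, being the intersection of \emph{all} $A$-definable finite-index left ideals, need not be of finite index (for $R=\Z$, for instance, $I_l=\bigcap_n n\bar\Z$ has infinite --- merely bounded --- index in $\bar\Z$), so (ii) does not apply to it; and $I_l$ is only $A$-type-definable, not $A$-definable, so (i) does not apply either. The paper's proof avoids both problems by working with a single arbitrary $A$-definable left ideal $I$ of finite index: (ii) applies to $I$ because $I$ is a finite-index subring, giving $\Stab'_\RR(I)$ of finite index; then (i) (the right-ideal version) applies to $I$ because $I$ is $A$-definable of finite index, producing an $A$-definable right ideal of finite index inside $I$. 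Hence $I_r\subseteq I$ for every such $I$, so $I_r\subseteq I_l$, and symmetrically $I_l\subseteq I_r$. Once you quantify over individual ideals $I$ rather than passing prematurely to the intersection $I_l$, the rest of your argument goes through unchanged; as it stands, the step ``(ii) applied to $S:=I_l$'' is unjustified.
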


\begin{proof}
	(i) In the proof of Lemma \ref{lem:groupIdealLR}(i), it is enough to work with $J(G) \cap G$
	and observe that all $f_r^{-1}[G]\cap G$ are definable and of finite index
	and there are only finitely many of them.\\
	(ii) follows as in Remark \ref{rem: trivial remark}.\\
	(iii) We modify the proof of Proposition \ref{prop:groupIdeal}(i). Consider any
	$A$-definable left ideal $I$ of finite index. By (ii), $\Stab'_\RR(I)$ has
	finite index. Thus, by (i), $I$ contains an $A$-definable right ideal of
	finite index. Symmetrically, we have the same statements for switched roles
	of ``left'' and ``right''. This implies that the intersection of all
	$A$-definable left ideals of finite index coincides with the intersection
	of all $A$-definable right ideals of finite index, and so it is a
	two-sided ideal. Moreover, by (i) this two-sided ideal is contained in $G$.
\end{proof}

\begin{proposition}
	For an arbitrary (not necessarily unital)  ring $R$, $\RRiD_{A,ring} =
	\RRiD_{A,ideal}$ coincides with the intersection of all $A$-definable left
	[right] ideals of finite index.
\end{proposition}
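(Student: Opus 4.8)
The plan is to reduce the claim to the just-proved Lemma \ref{lemma: 0 for non-unital rings}, exactly as items (iii) and (iv) of Proposition \ref{prop:con} were reduced to Proposition \ref{prop:groupIdeal} in the unital case, but now avoiding any appeal to Fact \ref{fact: profinite rings = compact rings}, which used unitality. Write $I_{\mathrm{left}}$ for the intersection of all $A$-definable left ideals of finite index and $I_{\mathrm{right}}$ for the analogous intersection on the right. By Lemma \ref{lemma: 0 for non-unital rings}(iii), $I_{\mathrm{left}} = I_{\mathrm{right}}$ and this common object $I$ is a two-sided ideal of $\RR$. It is $A$-type-definable, being an intersection of $A$-definable sets, but in general it is \emph{not} of finite index (only bounded index), so it need not itself be one of the ideals in either defining family; that is fine, because the target object $\RRiD_{A,ideal}$ is also defined as an intersection, not required to be definable.

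First I would show $\RRiD_{A,ideal} \subseteq I$. Every $A$-definable left ideal of finite index in particular contains the $A$-definable two-sided ideal $\bigcap_{f\in\Aut(\RR/A)} f[J]$ obtained by the orbit-of-$J$ argument (here using that a finite-index definable ideal has a finite $\Aut(\RR/A)$-orbit, since its orbit is bounded and it is definable), and such two-sided ideals have finite index; hence $\RRiD_{A,ideal}$, being contained in all $A$-definable two-sided ideals of finite index, is contained in each $A$-definable left ideal of finite index, so $\RRiD_{A,ideal} \subseteq I_{\mathrm{left}} = I$. The reverse inclusion $I \subseteq \RRiD_{A,ideal}$ is immediate: every $A$-definable two-sided ideal of finite index is in particular an $A$-definable left ideal of finite index, so $I = I_{\mathrm{left}}$ is contained in it, whence $I \subseteq \RRiD_{A,ideal}$. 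This gives $\RRiD_{A,ideal} = I = I_{\mathrm{left}} = I_{\mathrm{right}}$.

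Finally I would handle $\RRiD_{A,ring}$. The inclusion $\RRiD_{A,ring} \subseteq \RRiD_{A,ideal}$ is trivial since every ideal is a subring. For the converse, let $S$ be any $A$-definable subring of $\RR$ of finite index; then $S$ is in particular an $A$-definable subgroup of $(\RR,+)$ of finite index, and by Lemma \ref{lemma: 0 for non-unital rings}(ii) both $\Stab_\RR(S)$ and $\Stab'_\RR(S)$ have finite index, so by Lemma \ref{lemma: 0 for non-unital rings}(i) $S$ contains an $A$-definable left ideal of finite index, which in turn contains $I_{\mathrm{left}} = \RRiD_{A,ideal}$. Thus $\RRiD_{A,ideal}$ is contained in every $A$-definable finite-index subring, i.e. $\RRiD_{A,ideal} \subseteq \RRiD_{A,ring}$, and the two are equal. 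I do not foresee any genuine obstacle here: the whole point is that Lemma \ref{lemma: 0 for non-unital rings} was designed to carry the finite-index arguments through without unitality, so the only care needed is to keep everything phrased in terms of intersections (bounded index) rather than asserting that $I$ itself is definable of finite index, and to not reintroduce Fact \ref{fact: profinite rings = compact rings} anywhere.
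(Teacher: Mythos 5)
The key step of your argument — that every $A$-definable left ideal $L$ of finite index contains an $A$-definable two-sided ideal of finite index — is not established, and this is a genuine gap. You appeal to ``the $A$-definable two-sided ideal $\bigcap_{f\in\Aut(\RR/A)} f[J]$ obtained by the orbit-of-$J$ argument,'' but that argument (as used in the proof of Proposition \ref{prop:con}) only promotes a \emph{definable} finite-index two-sided ideal to an \emph{$A$-definable} one by intersecting its finite $\Aut(\RR/A)$-orbit; it cannot convert a one-sided ideal into a two-sided one, since $f[J]$ is two-sided exactly when $J$ already is. Moreover you never exhibit any candidate two-sided $J$ inside $L$: Lemma \ref{lemma: 0 for non-unital rings}(i) hands you only one-sided ideals, and invoking part (iii) — which gives that $L$ contains the intersection $I$ — would be circular, since $\RRiD_{A,ideal}\subseteq I$ is precisely what you are trying to prove. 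With this step missing, the inclusion $\RRiD_{A,ideal}\subseteq I$ is unproved, and consequently so is $\RRiD_{A,ideal}\subseteq\RRiD_{A,ring}$, since your argument there routes $\RRiD_{A,ideal}$ into a finite-index left ideal inside a given subring and then relies on the same unproved containment.

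The paper closes exactly this gap by a topological detour you were trying to avoid: $I$ is an intersection of definable finite-index subgroups, so $\bar R/I$ is a compact topological ring that is profinite \emph{as a group}, hence a profinite ring; pulling back clopen two-sided ideals of $\bar R/I$ and then applying the orbit argument — now legitimately, to genuine two-sided ideals — exhibits $I$ as an intersection of $A$-definable two-sided ideals of finite index, giving $\RRiD_{A,ideal}\subseteq I$. A purely algebraic repair of your approach does exist: given an $A$-definable left ideal $L$ of finite index, run the $J(G)\cap G$ construction from the proof of Lemma \ref{lemma: 0 for non-unital rings}(i) for right multiplication to get an $A$-definable right ideal $J_1\subseteq L$ of finite index, then run it again for left multiplication inside $J_1$; one checks directly that the resulting $A$-definable left ideal is automatically two-sided because $J_1$ is a right ideal. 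But this requires unwinding the lemma's proof and adding a small verification, not merely citing its statement, and your proposal does not supply it.
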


\begin{proof}
	By Lemma \ref{lemma: 0 for non-unital rings}, the intersection of all
	$A$-definable left [right] ideals of finite index is a two-sided ideal $I$. Since $I$ is type-definable, $\bar R/ I$ is a compact topological ring (with the
	logic topology). It is also profinite as a group, as $I$ is an
	intersection of definable finite index subgroups.
	\begin{claim2}
	If a topological ring is profinite as a group, then it is profinite as a ring. In particular, $\bar R/I$ is profinite as a ring.
	\end{claim2}
	\begin{proof}[Proof of Claim.]
Let $S$ be a topological ring which is profinite as a group. Then $S$ has a basis of neighborhoods of $0$ consisting of clopen subgroups, and we need to show that it has a basis of neighborhoods of $0$ consisting of clopen two-sided ideals. So take a clopen subgroup $V \subseteq S$. For each $x \in S$, there are open neighborhoods $U_x \ni 0$ and $V_x \ni x$ such that $V_x U_x V_x + U_x \subseteq V$. By compactness, there are finitely many $x_0, x_1, \ldots, x_{n-1}$ such that $S = \bigcup_{i < n}V_{x_i}$. Put $U = \bigcap_{i < n}U_{x_i} $. Clearly, $U$ is an open neighborhood of $0$ and $S U S + U \subseteq V$. Let $H$ be the group generated by $S U S + U$. Then $H$ is a two-sided ideal. Since $S U S + U$ is open, $H$ is open (therefore clopen), and $H \subseteq V$ because $V$ is a group. This suffices.
	\end{proof}
	
	Hence, as in the proof of Proposition
	\ref{prop:con},
	we get that $I$ is an intersection of $A$-definable two-sided ideals of
	finite index. Thus, $\RRiD_{A,ideal} \subseteq I$, but the opposite
	inclusion is immediate from the definition of $I$, so we have equality.
	Hence, by Lemma \ref{lemma: 0 for non-unital rings} (ii) and (iii), we
	easily get  $\RRiD_{A,ideal} \subseteq  \RRiD_{A,ring}$, while the
	opposite inclusion is obvious.
\end{proof}

\subsection{Characterization of the ring components}
We now give a characterization of the ring components in terms of subgroups of the additive group. For convenience, the following result is stated in two parts, even though the components $\RRiT_A$ and $\RRiD_A$ are equal.
	\begin{proposition} \label{prop:charComp}
	\begin{enumerate}[(i)]
			\item $\RRiT_A$ is the intersection of all $A$-type-definable subgroups $G$ of $\RRa$ with bounded index such that \mbox{$[\RRa : \Stab_{\RR}(G)]$} is bounded.
			\item $\RRiD_A$ is the intersection of all $A$-definable subgroups $G$ of $\RRa$ with finite index such that \mbox{$[\RRa : \Stab_{\RR}(G)]$} is finite.
	\end{enumerate}
	\end{proposition}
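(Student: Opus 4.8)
The plan is to prove each part by two inclusions. Write $\Gamma$ for the intersection of subgroups described in (i) and $\Delta$ for the one described in (ii); I will show $\Gamma = \RRiT_A$ and $\Delta = \RRiD_A$. The point is that everything needed has essentially been established in the previous subsection: one inclusion is the observation that the ring components are themselves two‑sided ideals and so trivially satisfy the stabilizer hypothesis, while the other inclusion is exactly Lemma~\ref{lem:groupIdealLR}/Proposition~\ref{prop:groupIdeal} (resp. the finite‑index counterpart, Lemma~\ref{lemma: 0 for non-unital rings}) combined with the minimality built into the definition of $\RRiT_A$ and $\RRiD_A$ (Corollary~\ref{cor:id}).

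For (i): the inclusion $\Gamma \subseteq \RRiT_A$ holds because $\RRiT_A$ is one of the groups over which $\Gamma$ is taken. Indeed $\RRiT_A = \RRiT_{A,ring}$ is an $A$-type-definable subring, hence an $A$-type-definable subgroup of $\RRa$, of bounded index; and since $\RRiT_A$ is in fact a two-sided ideal (Corollary~\ref{cor:id}), we have $\Stab_\RR(\RRiT_A) = \RR$, so $[\RRa : \Stab_\RR(\RRiT_A)]$ is (trivially) bounded (alternatively one may simply invoke Remark~\ref{rem: trivial remark}). For the reverse inclusion, take any $A$-type-definable subgroup $G \leq \RRa$ of bounded index with $[\RRa : \Stab_\RR(G)]$ bounded. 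By Lemma~\ref{lem:groupIdealLR}(i) (the left version, matching the left stabilizer in the hypothesis), $G$ contains an $A$-type-definable left ideal of $\RR$ of bounded index, and by Corollary~\ref{cor:id}(i) this left ideal contains $\RRiT_A$. Hence $\RRiT_A \subseteq G$, and intersecting over all such $G$ gives $\RRiT_A \subseteq \Gamma$.

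Part (ii) is parallel, using finite indices throughout. For $\Delta \subseteq \RRiD_A$: recall that $\RRiD_A = \RRiD_{A,ideal}$ is the intersection of all $A$-definable two-sided ideals $J$ of $\RR$ of finite index; each such $J$ is an $A$-definable subgroup of $\RRa$ of finite index with $\Stab_\RR(J) = \RR$, so $J$ occurs in the intersection defining $\Delta$, whence $\Delta \subseteq J$ and therefore $\Delta \subseteq \RRiD_A$. For $\RRiD_A \subseteq \Delta$: given an $A$-definable subgroup $G \leq \RRa$ of finite index with $[\RRa : \Stab_\RR(G)]$ finite, Lemma~\ref{lemma: 0 for non-unital rings}(i) provides an $A$-definable left ideal of $\RR$ of finite index contained in $G$; this is in particular an $A$-type-definable left ideal of bounded index, so by Corollary~\ref{cor:id}(i) it contains $\RRiT_A$, and $\RRiT_A = \RRiD_A$ (the equality recorded after Proposition~\ref{prop:con}). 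Thus $\RRiD_A \subseteq G$ for every such $G$, so $\RRiD_A \subseteq \Delta$.

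I do not expect a genuine obstacle: the statement is a repackaging of results already available. The two things to be careful about are (a) checking that the components themselves satisfy the stabilizer hypothesis — which is immediate since they are two‑sided ideals, so their left stabilizer is all of $\RR$; and (b) keeping the one‑sided bookkeeping straight — the hypothesis controls $\Stab_\RR$ (the \emph{left} stabilizer), so one extracts a \emph{left} ideal inside $G$, which suffices precisely because $\RRiT_A$ (resp. $\RRiD_A$) is contained in every $A$-type-definable (resp. $A$-definable, finite‑index) left ideal, by Corollary~\ref{cor:id}. If one prefers to avoid citing Lemma~\ref{lemma: 0 for non-unital rings} in part (ii), the same finite‑index argument can be run directly along the lines of the proof of Lemma~\ref{lem:groupIdealLR}, using that the relevant $f_r^{-1}[G]\cap G$ are definable of finite index and only finitely many in number.
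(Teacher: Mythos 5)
Your proof is correct and essentially matches the paper's: the key step in both is Proposition~\ref{prop:groupIdeal}(i) (equivalently, Lemma~\ref{lem:groupIdealLR}(i) together with Corollary~\ref{cor:id}(i)), combined with the already-established equality $\RRiT_A = \RRiD_A$, and the easy inclusion comes from noting that the two-sided ideals $\RRiT_A$ and the $A$-definable finite-index ideals already appear in the intersections being formed. The paper simply packages the same ingredients into a single chain of inclusions $\RRiT_A \subseteq \Gamma \subseteq \Delta \subseteq \RRiD_{A,\mathrm{ideal}} = \RRiT_A$ that proves both parts at once, whereas you split (i) and (ii) into separate two-sided inclusions.
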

	\begin{proof}
		If $G$ is a subgroup of $\RRa$ with bounded index such that  $\Stab_{\RR}(G)$ has bounded index, then, by Proposition \ref{prop:groupIdeal}(i), $\RRiT_A \subseteq G$, and so:
	\begin{align*}
		\RRiT_A & \subseteq \bigcap \left\{G \leq \RRa : G \text{ is } A\text{-type-definable}, [\RR : G] < \kappa, [\RR : \Stab_{\RR}(G)] < \kappa\right\} \\
		& \subseteq \bigcap \left\{G \leq \RRa : G \text{ is } A\text{-definable}, [\RR : G] < \omega, [\RR : \Stab_{\RR}(G)] < \omega\right\} \\
		& \subseteq \bigcap \left\{I \leq \RRa : I \text{ is an } A\text{-definable two-sided ideal with finite index}\right\} \\
		& = \RRiD_A = \RRiT_A. \qedhere
		\end{align*}
	\end{proof}
	Conversely, we have the following lemma.
	\begin{lemma} Let $G$ be an $A$-type-definable subgroup of $\RRa$ with bounded index. The following conditions are equivalent:
	\label{lem:TDcontain}
	\begin{enumerate}[(i)]
			\item $\RRiT_A \subseteq G$,
			\item $\RRiT_A \subseteq \Stab_\RR(G)$,
			\item $\Stab_\RR(G)$ has bounded index.
	\end{enumerate}
		Also, we can replace $\Stab_\RR(G)$ with $\Stab'_\RR(G)$ in items (ii) and (iii).
	\end{lemma}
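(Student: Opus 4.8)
I want to prove the cyclic chain of implications $(i)\Rightarrow(iii)\Rightarrow(ii)\Rightarrow(i)$, and then observe that the same argument works verbatim for $\Stab'_\RR(G)$. The key inputs are Corollary~\ref{cor:id} (which says $\RRiT_A$ is itself the smallest $A$-type-definable two-sided ideal of $\RR$ with bounded index, in particular a left ideal) and the fact that $\RRiT_A$ is a subring, so that Remark~\ref{rem: trivial remark} applies to it.

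First I would do $(i)\Rightarrow(ii)$ directly as a warm-up, since it is almost immediate: if $\RRiT_A\subseteq G$, then for any $r\in\RRiT_A$ we have $r\cdot\RRiT_A\subseteq\RRiT_A$ (because $\RRiT_A$ is a ring, or because it is a left ideal) — wait, more carefully, I want $r\cdot G\subseteq G$. Here is the clean route. Take $r\in\RRiT_A$. Since $\RRiT_A$ is a left ideal of $\RR$ (Corollary~\ref{cor:id}(i)), $r\cdot\RR\subseteq\RRiT_A\subseteq G$, so in particular $r\cdot G\subseteq G$, i.e. $r\in\Stab_\RR(G)$. Hence $\RRiT_A\subseteq\Stab_\RR(G)$, which is $(ii)$. (So actually the natural order is $(i)\Rightarrow(ii)\Rightarrow(iii)\Rightarrow(i)$.) Next, $(ii)\Rightarrow(iii)$: $\Stab_\RR(G)\supseteq\RRiT_A$ and $\RRiT_A$ has bounded index in $\RR$, so a fortiori $\Stab_\RR(G)$ has bounded index. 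Finally, $(iii)\Rightarrow(i)$ is exactly Proposition~\ref{prop:groupIdeal}(i): if $G$ is $A$-type-definable of bounded index and $\Stab_\RR(G)$ has bounded index, then $G$ contains an $A$-type-definable two-sided ideal of $\RR$ with bounded index, and any such ideal contains $\RRiT_A$ by minimality (Corollary~\ref{cor:id}(i) or the definition of $\RRiT_{A,\mathrm{ideal}}=\RRiT_A$); hence $\RRiT_A\subseteq G$.

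For the final sentence — replacing $\Stab_\RR(G)$ by $\Stab'_\RR(G)$ in $(ii)$ and $(iii)$ — I would just note that all three facts used are symmetric in ``left'' and ``right'': Corollary~\ref{cor:id}(i) says $\RRiT_A$ is also the smallest $A$-type-definable \emph{right} ideal of bounded index, so $\RRiT_A\cdot\RR\subseteq\RRiT_A$; Proposition~\ref{prop:groupIdeal}(i) is stated for either stabilizer; and the index comparison is identical. Thus the chain $(i)\Rightarrow(ii')\Rightarrow(iii')\Rightarrow(i)$ goes through with right multiplication throughout, using $r\cdot\RR\subseteq\RRiT_A$ replaced by $\RR\cdot r\subseteq\RRiT_A$ for $r\in\RRiT_A$.

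I do not expect a serious obstacle here; the lemma is essentially a repackaging of Corollary~\ref{cor:id} and Proposition~\ref{prop:groupIdeal} into an ``if and only if'' form. The only point requiring a little care is making sure that $\RRiT_A$ genuinely contains \emph{every} $A$-type-definable two-sided (or one-sided) ideal of bounded index that sits inside $G$ — this is where unitality is implicitly used via Corollary~\ref{cor:id}/Proposition~\ref{prop:con}, which already identify $\RRiT_{A,\mathrm{ideal}}=\RRiT_A$ with the smallest such object on each side — so I would cite Corollary~\ref{cor:id} explicitly at that step rather than re-deriving minimality.
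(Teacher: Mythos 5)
Your proof is correct and follows essentially the same route as the paper: $(i)\Rightarrow(ii)$ via the two-sided-ideal property of $\RRiT_A$, $(ii)\Rightarrow(iii)$ trivially, and $(iii)\Rightarrow(i)$ by Proposition~\ref{prop:groupIdeal}(i) together with minimality. One small bookkeeping slip: for $r\in\RRiT_A$, the inclusion $r\cdot\RR\subseteq\RRiT_A$ is the \emph{right}-ideal property (and $\RR\cdot r\subseteq\RRiT_A$ the \emph{left}-ideal property), so your labels are swapped — though since $\RRiT_A$ is two-sided this does not affect the validity of the argument.
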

	\begin{proof}
		For (i) $\rightarrow$ (ii): as $\RRiT_A$ is a two-sided ideal, we have $\RRiT_A \cdot G \subseteq \RRiT_A \subseteq G$. The implication (ii) $\rightarrow$ (iii) is immediate, and (iii) $\rightarrow$ (i) follows from Proposition \ref{prop:groupIdeal}(i).
	\end{proof}
	Likewise the lemma below for $A$-definable groups.
	\begin{lemma} Let $G$ be an $A$-definable subgroup of $\RRa$ with finite index. The following are equivalent:
	\label{lem:Dcontain}
	\begin{enumerate}[(i)]
			\item $\RRiD_A \subseteq G$,
			\item $\RRiD_A \subseteq \Stab_\RR(G)$,
			\item $\Stab_\RR(G)$ has finite index.
	\end{enumerate}
		Also, we can replace $\Stab_\RR(G)$ with $\Stab'_\RR(G)$ in items (ii) and (iii).
	\end{lemma}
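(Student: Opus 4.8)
The plan is to mirror the proof of Lemma~\ref{lem:TDcontain}, using that $\RRiD_A$ is a two-sided ideal and that $\RRiD_A = \RRiT_A$ (Proposition~\ref{prop:con}(iii),(iv)), together with one elementary remark specific to the definable setting: an $A$-definable subgroup of $\RRa$ of bounded index has in fact finite index. Indeed, if such a subgroup $H$ had index $\lambda$ with $\aleph_0 \le \lambda < \kappa$, then, choosing coset representatives $(r_i)_{i<\lambda}$, the partial type over $A \cup \{r_i : i<\lambda\}$ expressing that $x$ lies in none of the cosets $r_i + H$ would be finitely satisfiable, hence realized by $\kappa$-saturation, contradicting that these cosets cover $\RR$. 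We also recall that $[\RR : \RRiD_A] = [\RR : \RRiT_A]$ is bounded.

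For (i)~$\Rightarrow$~(ii): assuming $\RRiD_A \subseteq G$, for every $r \in \RRiD_A$ and $g \in G$ we have $rg \in \RRiD_A \subseteq G$, since $\RRiD_A$ is a left ideal; thus $r \cdot G \subseteq G$, i.e. $r \in \Stab_\RR(G)$, so $\RRiD_A \subseteq \Stab_\RR(G)$. For (ii)~$\Rightarrow$~(iii): since $[\RR : \RRiD_A]$ is bounded, $\Stab_\RR(G) \supseteq \RRiD_A$ has bounded index; moreover $\Stab_\RR(G)$ is $A$-definable, as it is defined over the parameters of $G$ by the condition $\forall x\,(x \in G \rightarrow r x \in G)$ on $r$, so by the remark above its index is finite. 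For (iii)~$\Rightarrow$~(i): $G$ is $A$-type-definable with bounded index and $\Stab_\RR(G)$ has bounded index, so Proposition~\ref{prop:groupIdeal}(i) yields an $A$-type-definable two-sided ideal of $\RR$ of bounded index contained in $G$; since $\RRiT_A$ is the smallest such ideal, $\RRiD_A = \RRiT_A \subseteq G$.

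The assertions with $\Stab'_\RR(G)$ follow by the same arguments with right multiplication in place of left multiplication: in (i)~$\Rightarrow$~(ii) one uses that $\RRiD_A$ is a right ideal, in (ii)~$\Rightarrow$~(iii) that $\Stab'_\RR(G)$ is likewise $A$-definable of bounded, hence finite, index, and in (iii)~$\Rightarrow$~(i) the $\Stab'_\RR(G)$-version of Proposition~\ref{prop:groupIdeal}(i). No genuine difficulty arises here; the only ingredient beyond the proof of Lemma~\ref{lem:TDcontain} is the passage from bounded to finite index for the definable subgroup $\Stab_\RR(G)$ in (ii)~$\Rightarrow$~(iii), which is the elementary saturation remark above.
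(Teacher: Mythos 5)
Your proof is correct and takes essentially the same approach as the paper: the paper simply reduces to Lemma~\ref{lem:TDcontain} by noting $\RRiD_A = \RRiT_A$ and that $\Stab_\RR(G)$, being $A$-definable, has bounded index iff it has finite index, whereas you unroll the three implications and spell out the saturation argument behind the bounded-to-finite passage. Both rely on the same ingredients ($\RRiD_A$ being a two-sided ideal, Proposition~\ref{prop:groupIdeal}, and definability of $\Stab_\RR(G)$), so there is no substantive difference.
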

	\begin{proof}
		This follows from the preceding lemma, because $\RRiD_A = \RRiT_A$, and if $G$ is $A$-definable,  then $\Stab_\RR(G)$ is also $A$-definable, and so $\Stab_\RR(G)$ is of bounded index if and only if it is of finite index.
	\end{proof}
	\begin{corollary}
		Let $G \leq \RRa$ be $A$-type-definable with bounded index. Then $\Stab_\RR(G)$ has bounded index if and only if $\Stab'_\RR(G)$ has bounded index. If $G$ is $A$-definable, then the same holds for ``bounded'' replaced by ``finite''.
	\end{corollary}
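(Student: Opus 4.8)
The plan is to deduce this statement immediately from Lemma~\ref{lem:TDcontain}. The first thing I would point out is that condition~(i) of that lemma, namely $\RRiT_A \subseteq G$, is left--right symmetric: it refers only to the subgroup $G$ and makes no mention of the side on which $\RR$ multiplies. By Lemma~\ref{lem:TDcontain}, the statement ``$\Stab_\RR(G)$ has bounded index'' (condition (iii)) is equivalent to $\RRiT_A \subseteq G$; and by the final sentence of Lemma~\ref{lem:TDcontain}, which allows the substitution of $\Stab'_\RR(G)$ for $\Stab_\RR(G)$ in~(ii) and~(iii), the statement ``$\Stab'_\RR(G)$ has bounded index'' is also equivalent to $\RRiT_A \subseteq G$. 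Composing these two equivalences gives that $\Stab_\RR(G)$ has bounded index if and only if $\Stab'_\RR(G)$ has bounded index, which is the first assertion.

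For the definable case I would argue as follows. If $G$ is $A$-definable (hence, being of bounded index, of finite index), then both $\Stab_\RR(G)$ and $\Stab'_\RR(G)$ are $A$-definable subgroups of $\RRa$, and an $A$-definable subgroup of $\RRa$ has bounded index if and only if it has finite index --- this is exactly the reduction already used in the proof of Lemma~\ref{lem:Dcontain}. Thus ``finite index'' may be freely replaced by ``bounded index'' for each of these two stabilizers, and the finite-index version of the equivalence follows at once from the bounded-index version established in the previous paragraph.

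I do not expect any genuine obstacle here: the entire content is packaged in Lemma~\ref{lem:TDcontain}, and the only points requiring a little care are (a) to invoke the left--right symmetric clause of that lemma rather than re-deriving the right-handed analogue, and (b) in the second part, to recall that a definable subgroup of $\RRa$ of bounded index is automatically of finite index.
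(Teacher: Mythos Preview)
Your proposal is correct and matches the paper's intended approach: the corollary is stated without proof immediately after Lemmas~\ref{lem:TDcontain} and~\ref{lem:Dcontain}, and is meant to follow exactly as you describe---both stabilizer conditions are equivalent to the symmetric condition $\RRiT_A \subseteq G$, and in the definable case one passes between ``bounded'' and ``finite'' via definability of the stabilizers.
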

	
	\subsection{Ring components vs. additive group components}\label{subsection: rings vs groups}
	
	
	Our goal is to compare the connected components of $\RR$ to the connected components of the additive group $(\RR,+)$. We start with an immediate observation.
	
	\begin{remark}\label{rem: basic inclusions between  components}
		$\RRaT_A \leq \RRaD_A \leq \RRiD_A = \RRiT_A$.
	\end{remark}
	Therefore, if $\RRiT_A = \RRaT_A$, then $\RRaD_A = \RRaT_A$. It is natural to ask under which conditions $\RRiT_A$ is equal to one of the group components. Namely,
	\begin{question}
	When $\RRiT_A = \RRaT_A$? When $\RRiT_A = \RRaD_A$?
	\end{question}
	

	Our objective is now to find a characterization of when $\RRaD_A = \RRaT_A$. This equality means exactly that the group quotient $\RR/\RRaT_A$ is profinite (this equivalence is well-known and can be justified by an argument as in the proof of Proposition \ref{prop:con}). Below is an immediate consequence of Corollary \ref{prop:bohrIfOnly} for additive groups of rings.
	\begin{corollary}\label{cor:cor1_2}
		Suppose that $R$ is considered with the full structure.
		\begin{enumerate}[(i)]
			\item If $\Ra$ has infinite exponent, then $\RRa/\RRaT_R$ is not profinite, and so \[\RRiT_R \neq \RRaT_R \neq \RRaD_R.\]
			\item If $\Ra$ has finite exponent, then $\RRa/\RRaT_R$ is profinite, and so $\RRaT_R = \RRaD_R$.
		\end{enumerate}
	\end{corollary}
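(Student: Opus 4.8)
The plan is simply to transport Corollary \ref{prop:bohrIfOnly} from the additive group $\Ra$ to the present setting. The one point that deserves a line of justification is that, when $R$ carries the full structure $\Lsetp{R}$, the structure induced on its additive group $\Ra$ is again the full structure: every subset of $R^n$ is already $0$-definable, so $\Ra$ with the structure it inherits from $R$ coincides with $\Ra$ equipped with the full structure, and $\RR$ is simultaneously a monster model for both. Thus $\RRaT_R$ is exactly the component appearing in Corollary \ref{prop:bohrIfOnly} applied to $\Ra$, and that corollary says precisely that $\RRa/\RRaT_R$ is profinite if and only if $\Ra$ has finite exponent.

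Granting this, part (ii) is immediate: if $\Ra$ has finite exponent then $\RRa/\RRaT_R$ is profinite, and the equivalence recalled just before the corollary — namely that $\RRaD_A = \RRaT_A$ exactly when $\RRa/\RRaT_A$ is profinite, justified by the argument in the proof of Proposition \ref{prop:con} — yields $\RRaT_R = \RRaD_R$.

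For part (i), suppose $\Ra$ has infinite exponent, so $\RRa/\RRaT_R$ is not profinite. I would then observe that all the other quotients in sight are automatically profinite: $\RRa/\RRaD_R$ is profinite since $\RRaD_R$ is an intersection of $R$-definable finite-index additive subgroups, and $\RR/\RRiT_R = \RR/\RRiD_R$ (by Proposition \ref{prop:con}) is profinite as a group since $\RRiD_R$ is an intersection of $R$-definable finite-index subrings, each of which is in particular a finite-index additive subgroup. Hence neither $\RRaT_R = \RRaD_R$ nor $\RRiT_R = \RRaT_R$ can hold, for either equality would force $\RRa/\RRaT_R$ to be profinite. This gives $\RRiT_R \neq \RRaT_R \neq \RRaD_R$.

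I do not anticipate a real obstacle: the statement is essentially a specialization of Corollary \ref{prop:bohrIfOnly}, and the only mildly delicate points are (a) checking that reducing the full ring structure to the additive group does not change which sets are definable, and (b) recording that every relevant quotient other than $\RRa/\RRaT_R$ is profinite by construction.
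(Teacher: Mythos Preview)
Your proposal is correct and follows essentially the same route as the paper, which states that the corollary is immediate from Corollary~\ref{prop:bohrIfOnly} together with the equivalence $\RRaD_A = \RRaT_A \Leftrightarrow \RRa/\RRaT_A$ profinite recorded just before it. For part~(i), the paper's implicit argument is a shade simpler than yours: from the chain $\RRaT_R \leq \RRaD_R \leq \RRiT_R$ in Remark~\ref{rem: basic inclusions between  components}, the equality $\RRiT_R = \RRaT_R$ would force $\RRaD_R = \RRaT_R$, so both inequalities follow at once from $\RRaT_R \neq \RRaD_R$; your detour through the profiniteness of $\RRa/\RRaD_R$ and $\RR/\RRiT_R$ is correct but unnecessary.
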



	A fundamental example of a ring whose additive group has infinite exponent is the ring of integers. Regardless of the structure on $\Z$, every subgroup of $\ZZa$ with finite index is of the form $n\ZZ$ for some $n \neq 0$, and so it is $0$-definable. Hence, for any structure on $\Z$, $\ZZaD = \bigcap_{n \neq 0}n\ZZ$ exists and is an ideal, so it coincides with $\bar\Z^0$ (which therefore exists).

	\begin{example}
		\label{ex:Z}
		Consider $\Z$ with the full structure. Since $\Z$ has infinite exponent, the above comment and Corollary \ref{cor:cor1_2} imply that $\ZZiT_\Z = \bar \Z^0 = \ZZaD \neq \ZZaT_\Z$.
	\end{example}

Using more explicit arguments, in \cite[Example 4.5]{BCG} the same conclusion was obtained working with the pure ring structure $(\Z,+,\cdot)$.

The core argument behind Corollary \ref{prop:bohrIfOnly} relies on harmonic analysis and the description of the Bohr compactification which it provides. On the other hand, both this corollary as well as the corollaries which we derive from it are stated in algebraic and model-theoretic terms. This leads to a question whether they can be proved by means of model-theory, e.g.:
	%
	\begin{question}
		Can one prove Corollary \ref{cor:cor1_2} without referring to Pontryagin duality?
	\end{question}
	
	We have already seen that $\RRiT_A = \RRiD_A$ may be strictly bigger than $\RRaT_A$. Now, we give examples where $\RRiT_A$ is strictly bigger than $\RRaD_A$.
	
	\begin{example}
	We are going to find an infinite field $K$ and a 0-definable proper subgroup $H < R=(\bar{K},+)$ with finite index. In a field of characteristic $p > 0$, such a subgroup always exist, and we can add a predicate for it. But we also give  an example for a pure field structure. Let $p$ be prime and $n \in \N_{>0}$. Consider the finite field $\FF{p^n}$ in the language of rings. The $0$-definable function $f \colon \FF{p^n} \to \FF{p^n}$ given by $f(x) = x^p - x$ is a homomorphism of $(\FF{p^n},+)$ whose kernel is the prime field $\FF{p} \subseteq \FF{p^n}$. Hence, the image of $f$ is a subgroup of $(\FF{p^n},+)$ with index $p$, and this is also true in the ultraproduct $K:=\prod_{n\in\N} \F_{p^n}/\U$ for a non-principal $\U$. Then $K$ is infinite and has the desired subgroup $H$.
Then $\RRaD_A \leq H \subsetneq \RRiT_A = \bar{K}$.
	\end{example}
	
	\begin{remark}
		In a field $K$ of characteristic $0$, the group $(K,+)$ is divisible and has no subgroups of finite index, so $(\bar{K},+)^0 = \bar{K}$ exists and coincides with $\bar K^{00}$.
	\end{remark}

     	Lemmas \ref{lem:TDcontain} and \ref{lem:Dcontain} give us the following straightforward criteria for when the type-definable connected component of $\RR$ differs from the connected components of $(\RR,+)$.
	
	\begin{corollary}
		\phantomsection\label{cor:criteria}
		\begin{enumerate}[(i)]
			\item $\RRiT_A \neq \RRaT_A$ if and only if there exists an $A$-type-definable $G \leq \RRa$ with bounded index such that \mbox{$[\RRa : \Stab_{\RR}(G)]$} is unbounded.
			\item $\RRiD_A \neq \RRaD_A$ if and only if there exists an $A$-definable $G \leq \RRa$ with finite index such that \mbox{$[\RRa : \Stab_{\RR}(G)]$} is infinite.
		\end{enumerate}
	\end{corollary}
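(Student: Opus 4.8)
The plan is to obtain both equivalences by contraposition from Lemmas~\ref{lem:TDcontain} and~\ref{lem:Dcontain}, using the inclusions in Remark~\ref{rem: basic inclusions between components} to turn the equalities $\RRiT_A=\RRaT_A$ and $\RRiD_A=\RRaD_A$ into containment statements. No genuinely new idea is needed; the point is simply to unwind the definitions carefully.

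For part (i): since $\RRaT_A \leq \RRiT_A$ by Remark~\ref{rem: basic inclusions between components}, the equality $\RRiT_A = \RRaT_A$ is equivalent to the inclusion $\RRiT_A \subseteq \RRaT_A$, which --- because $\RRaT_A$ is by definition the smallest $A$-type-definable subgroup of $\RRa$ of bounded index --- is in turn equivalent to the statement that $\RRiT_A \subseteq G$ for \emph{every} $A$-type-definable subgroup $G \leq \RRa$ of bounded index. Now, for a fixed such $G$, the equivalence of (i) and (iii) in Lemma~\ref{lem:TDcontain} says precisely that $\RRiT_A \subseteq G$ holds if and only if $[\RRa : \Stab_\RR(G)]$ is bounded. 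Combining these, $\RRiT_A = \RRaT_A$ holds if and only if $[\RRa : \Stab_\RR(G)]$ is bounded for every $A$-type-definable $G \leq \RRa$ of bounded index; contraposing this gives exactly the statement of (i).

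Part (ii) is proved in the same way, with $\RRiD_A$, $\RRaD_A$, ``$A$-definable'' and ``finite index'' replacing $\RRiT_A$, $\RRaT_A$, ``$A$-type-definable'' and ``bounded index''. Again $\RRaD_A \leq \RRiD_A$ by Remark~\ref{rem: basic inclusions between components}, and since $\RRaD_A$ is the intersection of all $A$-definable finite-index subgroups of $\RRa$, the equality $\RRiD_A = \RRaD_A$ is equivalent to $\RRiD_A$ being contained in each such subgroup $G$. By the equivalence of (i) and (iii) in Lemma~\ref{lem:Dcontain}, for an $A$-definable finite-index $G \leq \RRa$ the containment $\RRiD_A \subseteq G$ is equivalent to $[\RRa:\Stab_\RR(G)]$ being finite, and contraposing the resulting characterization yields (ii).

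There is no real obstacle here; the only place calling for a moment's care is the reduction in the first step of each part --- recognising that ``the smallest bounded-index (respectively finite-index) additive subgroup over $A$ equals $\RRiT_A$ (respectively $\RRiD_A$)'' is literally the same as ``$\RRiT_A$ (respectively $\RRiD_A$) is contained in every such subgroup'', which is immediate from the fact that $\RRaT_A$ (respectively $\RRaD_A$) is, by definition, the minimum (respectively the intersection) of the family of subgroups in question and lies below $\RRiT_A$ (respectively $\RRiD_A$). One should also remark that $\Stab_\RR(G)$ may be replaced by $\Stab'_\RR(G)$ throughout, since both lemmas explicitly include that option.
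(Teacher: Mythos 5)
Your proof is correct, and it is essentially the argument the paper intends: the paper derives Corollary~\ref{cor:criteria} directly from Lemmas~\ref{lem:TDcontain} and~\ref{lem:Dcontain} without spelling out the contrapositive unwinding, which is exactly what you have written out. In particular, you correctly identify that the key step is converting the equality of components into a universally quantified containment statement via Remark~\ref{rem: basic inclusions between components}, after which the equivalence (i) $\leftrightarrow$ (iii) of each lemma applies termwise.
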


Observe that if $A \subseteq R$, then on the right-hand side of the second criterion the ring $\RR$ can be replaced by $R$.

Now, we give an application of the second criterion. The example $\Z_2[X]$ was suggested to us by Światosław Gal.
	\begin{example}\label{example: ring 0 not equal group 0}
(1) Let $R:= \Z_2[X]$ be equipped with the full structure. We will show that it satisfies the right hand side of Corollary \ref{cor:criteria}(ii) for any $A\subseteq R$, so $\RRiD_A \neq \RRaD_A$.
Let $h\colon R \to \Z_2$ be given by
\[h\left(\sum a_i X^i\right) := \sum_{ k \in \omega} a_{2^k}.\]
Then $h$ is an epimorphism of groups and $G := \ker h$ is a subgroup of $R$ of index 2.
We will check that $f \in \Stab_{R}(G)$ iff $f$ is constant, which directly implies that $[\RRa : \Stab_{\RR}(\bar G)]$ is infinite.

Clearly $0, 1 \in \Stab_{R}(G)$. Now, take $f \in \Z_2[X]$ with $\deg(f) = k > 0$. Fix some natural $n > 1$ such that $2^n - k > 2^{n-1}$. Let $g := X^{2^n - k}$. Then $g \in G$, but $h(f \cdot g) = a_{2^n} = 1$, so $f\cdot g \notin G$.

(2) The above example generalizes to any $R:=K[\bar X]$ equipped with the full structure, where $K$ is a field of characteristic $p>0$ and $\bar X =(X_i)_{i<\lambda}$ is a (possibly infinite) tuple of variables. Namely, let $h\colon R \to \Z_p$ be given by
\[h\left(\sum a_{\bar i} X^{\bar i}\right) := \sum_{k \in \omega} \pi(a_{2^k}),\]
where $\pi \colon (K,+) \to (\Z_p,+)$ is any group homomorphism which is the identity on $\Z_p$, and $a_{2^k}$ is $a_{\bar i}$ for the tuple $\bar i$ with $2^k$ on the first position and $0$ elsewhere. As in (1), $G:=\ker(h)$ has finite index in $(R,+)$, whereas $\Stab_R(G)$  has infinite index, because each polynomial in $K[X_0] \setminus \Z_p$  is not in $\Stab_R(G)$. So $\RRiD_A \neq \RRaD_A$ for any $A\subseteq R$.
   \end{example}

Example \ref{example: ring 0 not equal group 0}(1) implies that for $R :=\Z[X]$ we also have $\RRiD_A \neq \RRaD_A$, but in order to see this, we need to make a few general remarks which may be useful in other situations, too. 

\begin{remark}\label{remark: f[00]=00}
Suppose $R,S$ are rings $A$-definable in some structure $M$ and $f \colon S \to R$ is an $A$-definable epimorphism. Then $f[\bar S^*_A] =R^*_A$ and $f[(\bar S,+)^*_A]= (\bar R,+)^*_A$, where $* \in \{0,00,000\}$.
\end{remark}

\begin{proof}
This follows easily from the fact that for any group epimorphism $h \colon G \to H$ and subgroups $K \leq G$ and $L \leq H$, we have $[H:f[K]] \leq [G:K]$ and $[G:f^{-1}[L]] \leq [H:L]$. 
\end{proof}

Notice that whenever $R$ is a ring definable in a structure $M$, then each of the components $\bar R^*_A$ and $(\bar R,+)^*_A$ (where $*\in \{0,00,000\}$ and $A \subseteq R$) computed with respect to the language $\mathcal{L}_{set,R}$ coincides with the one with respect to the language $\mathcal{L}_{set,M}$.
\begin{example}
For $S:=\Z[\bar X]$ ($\bar X$ a tuple of variables of an arbitrary length) equipped with the full structure and any $A \subseteq S$ we have $\bar S^0_A \neq (\bar S,+)^0_A$. 

In order to see this, let $R:=\Z_2[X]$ and take an epimorphism $f \colon S \to R$. Let $M$ consist of two sorts $S$ and $R$ and equip it with the full structure. 
By the comment preceding this example, we can compute our components with respect to   $\mathcal{L}_{set,M}$ in place of  $\mathcal{L}_{set,R}$. Since $f$ is 0-definable in $M$, the conclusion follows from Example \ref{example: ring 0 not equal group 0}(1) and Remark \ref{remark: f[00]=00}.
\end{example}

In Example \ref{ex:Z}, the left hand side of the criterion in \ref{cor:criteria}(i) holds, so the right hand side holds as well. But can one see directly that the RHS of (i) holds in this example? Also, the left hand of the criterion in \ref{cor:criteria}(ii) fails in this example, so the right hand side fails as well, but this is trivially seen directly, as each subgroup of finite index of $(\Z,+)$ is an ideal.

	Below we show a positive result for the case of a group component which does not depend on the parameters (which is for example always the case under NIP).

	\begin{proposition}
		\label{prop:R00Ideal}
		Let $(R, +, \cdot, 0, 1, \ldots)$ be a (unital) ring.
		\begin{enumerate}[(i)]
			\item If $\RRaD$ exists, then $\RRiT = \RRiD$ exists and $\RRaD = \RRiT$.
			\item If $\RRaT$ exists, then $\RRiT$ exists and $\RRaT = \RRaD = \RRiT$.
			\item If $\RRaI$ exists, then $\RRiI$ exists and $\RRaI = \RRaT = \RRaD = \RRiT = \RRiI $.
		\end{enumerate}
	\end{proposition}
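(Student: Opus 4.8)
The plan is to reduce everything to the key fact that if one of the \emph{group} components $\RRaD$, $\RRaT$, $\RRaI$ does not depend on parameters, then it is an ideal (or contained in an ideal of the same type with the same index), and hence must coincide with the corresponding ring component sitting above it by Remark~\ref{rem: basic inclusions between  components}. The main observation powering this is that the additive group components are \emph{characteristic} in an appropriate sense, so they are invariant under multiplication by ring elements once parameter-independence is granted.

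I would proceed as follows. First, prove (i). Suppose $\RRaD$ exists. Fix $r \in \RR$. The map $f_r \colon (\RR,+) \to (\RR,+)$, $x \mapsto r\cdot x$, is an additive endomorphism, and it is $\{r\}$-definable. For any $\emptyset$-definable subgroup $H \leq \RRa$ of finite index, $f_r^{-1}[H]$ is an $\{r\}$-definable subgroup of finite index, hence $\RRaD_{\{r\}} \subseteq f_r^{-1}[H]$, i.e.\ $f_r[\RRaD_{\{r\}}] \subseteq H$. Since $\RRaD$ exists, $\RRaD_{\{r\}} = \RRaD$, so $r \cdot \RRaD \subseteq H$ for every such $H$; intersecting over all $H$ gives $r\cdot \RRaD \subseteq \RRaD$. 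As $r$ was arbitrary, $\RRaD$ is a left ideal of $\RR$ with finite index; likewise (using right multiplication) a right ideal, hence a two-sided ideal. Therefore $\RRiD_\emptyset \subseteq \RRaD$. Combined with the inclusion $\RRaD = \RRaD_\emptyset \leq \RRiD_\emptyset = \RRiT_\emptyset$ from Remark~\ref{rem: basic inclusions between  components} (and parameter-independence on the left), we get $\RRiD_\emptyset = \RRaD$ is $\emptyset$-definable-as-an-intersection in a parameter-free way, so $\RRiD = \RRiT$ exists and equals $\RRaD$.

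Next, (ii): suppose $\RRaT$ exists. The same argument works with ``finite index $\emptyset$-definable subgroup'' replaced by ``bounded index $\emptyset$-type-definable subgroup'' and $f_r^{-1}[H]$ type-definable of bounded index: one gets $\RRaT_{\{r\}} = \RRaT$ forces $r \cdot \RRaT \subseteq \RRaT$ for all $r$, so $\RRaT$ is a bounded-index two-sided ideal which is $\emptyset$-type-definable, hence $\RRiT_\emptyset \subseteq \RRaT$; the reverse inclusion $\RRaT \leq \RRiT_\emptyset$ is Remark~\ref{rem: basic inclusions between  components}, so $\RRiT$ exists and equals $\RRaT$. Then from $\RRiT = \RRiD$ (Proposition~\ref{prop:con}) and $\RRaT \leq \RRaD \leq \RRiD$, all four are equal. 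For (iii): if $\RRaI$ exists, run the argument once more with ``bounded index $\emptyset$-invariant subgroup'' and $f_r^{-1}[H]$ invariant of bounded index, obtaining that $\RRaI$ is an $\emptyset$-invariant bounded-index two-sided ideal, so $\RRiI_\emptyset \subseteq \RRaI$; conversely $\RRiI_\emptyset \leq \RRiT_\emptyset = \RRiD_\emptyset$, and $\RRaI \leq \RRaT$; chaining $\RRaI \leq \RRaT \leq \RRaD \leq \RRiD = \RRiT$ and $\RRiI_\emptyset \subseteq \RRaI$ with $\RRiI_\emptyset \leq \RRaI$ pins everything down. Note that once (iii)'s hypothesis holds, the weaker hypotheses of (i) and (ii) hold as well (since $\RRaI$ existing forces $\RRaT$ and $\RRaD$ to exist and equal it), so the chain of five equalities follows by combining all three parts.

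I do not expect a serious obstacle here; the only point requiring minor care is the uniformity/definability bookkeeping — making sure that $f_r^{-1}[H]$ is definable (resp.\ type-definable, resp.\ invariant) over $\{r\} \cup (\text{parameters of } H)$ with index bounded by that of $H$, which is exactly the content of the elementary remark cited in the proof of Remark~\ref{remark: f[00]=00} — and the observation that parameter-independence of a group component is preserved under enlarging the parameter set, so that $\RRaD_{\{r\}} = \RRaD_\emptyset$ genuinely holds. Everything else is a formal consequence of Remark~\ref{rem: basic inclusions between  components}, Proposition~\ref{prop:con}, and Corollary~\ref{cor:id}.
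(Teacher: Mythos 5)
Your proofs of parts (i) and (ii) are correct and follow essentially the same route as the paper: show the group component is closed under $f_r$ by pulling back definable/type-definable subgroups over $\{r\}\cup A$ and invoking parameter-independence, conclude it is a two-sided ideal, then sandwich it between $\RRaD_A \leq \RRiD_A = \RRiT_A$ (Remark~\ref{rem: basic inclusions between  components}, Proposition~\ref{prop:con}) and the smallest $A$-type-definable ideal.

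Part (iii), however, has a genuine gap. Your argument correctly establishes that $\RRaI$ is a two-sided ideal and that $\RRiI_A = \RRaI$ for all $A$, so $\RRiI$ exists and equals $\RRaI$. But you then assert, without proof, that ``$\RRaI$ existing forces $\RRaT$ and $\RRaD$ to exist and equal it,'' and your chain $\RRaI \leq \RRaT \leq \RRaD \leq \RRiD = \RRiT$ together with $\RRiI_\emptyset \subseteq \RRaI$ does not close: all the inclusions point in the same direction, and there is nothing bringing $\RRiT_A$ back down into $\RRaI$. Concretely, $\RRaI$ is $\emptyset$-invariant of bounded index, but there is no a priori reason it should be $A$-type-definable, which is exactly what you would need to conclude $\RRaT_A \subseteq \RRaI$. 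Note that the difference between $\RRiI_A$ and $\RRiT_A$ is precisely the content of Question~\ref{quest: 000=00}, which the paper leaves open, so you cannot get this ``for free.'' The paper's proof supplies the missing ingredient: since $(\RR,+)$ is abelian, \cite[Theorem~0.5]{krupil} gives $\RRaI_A = \RRaT_A$ for every $A$; hence $\RRaI$ existing immediately forces $\RRaT$ to exist and equal it (and in particular $\RRaI$ is type-definable), after which (ii) finishes the job. You need to invoke this abelian-groups fact (or something equivalent); it is not a formal consequence of the machinery you cite.
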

	\begin{proof}
		By Corollary \ref{cor:id} and Remark \ref{rem: basic inclusions between  components}, to prove (i), it is sufficient to show that if $\RRaD$ exists, then it is a left ideal. For any $r \in \RR$, the set $f_r^{-1}[\RRaD]$ is an intersection of definable subgroups of $\RRa$ of finite index, so \mbox{$\RRaD \subseteq f_r^{-1}[\RRaD]$}.

In (ii), the proof of $\RRaT = \RRiT$ is similar; then the remaining equality follows from Remark \ref{rem: basic inclusions between  components}.

In (iii), the proof of  $\RRaI = \RRiI $ is again similar. Since $\RRaI$ exists, so does $\RRaT$. As $(\RR,+)$ is abelian, by \cite[Theorem 0.5]{krupil}, we have  $\RRaI = \RRaT$. The remaining equalities follow from (ii).
	\end{proof}

	\subsection{Definable compactifications of rings}\label{subsection: Bohr comp of top. rings}
	
	We now turn our attention to the notion of definable compactifications of rings. Let us recall the notion of \emph{definable compactification}.
	
	\begin{definition} \label{defin}
		\begin{enumerate}
		    \item For a definable $X\subseteq R$ and a compact topological space $C$ a function $f \colon X \to C$ is said to be \emph{definable} if for each pair of disjoint, closed subsets $C_1, C_2 \subseteq C$, there are definable, disjoint subsets $U_1, U_2 \subseteq X$ such that $f^{-1}[C_i] \subseteq U_i$ for $i=1,2$.
			\item A definable compactification of a ring $R = (R, +, \cdot, \ldots)$ is a compact topological ring $C$ together with a definable ring homomorphism $\phi \colon R \to C$ with dense image.
			
			\item The definable Bohr compactification of $R$ is a unique up to isomorphism definable compactification $\phi \colon R \to C$ which satisfies the following universal property: if $\phi'\colon R \to C'$ is a definable compactification of $R$, then $\phi' = f \circ \phi$ for a unique continuous homomorphism $f\colon C \to C'$.
		\end{enumerate}
	\end{definition}
	As in the context of groups, if a ring $R$ is considered in the full set-theoretic language $\Lsetp{R}$, then a definable [Bohr] compactification is the same thing as a classical [resp. Bohr] compactification of $R$ considered with the discrete topology.
	
	An essential result of \cite{GPP} shows the existence and uniqueness of the definable Bohr compactification of a definable group by means of its connected components. We state an analogous result for rings.

	\begin{proposition}\label{prop: def bohr comp. of R}
	The definable Bohr compactification of $R$ is $\RR/\RRiT_R$ with the natural map 
		\[R \ni r \mapsto r/\RRiT_R \in \RR/\RRiT_R.\] 
	\end{proposition}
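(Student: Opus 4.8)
The plan is to mimic the proof of the analogous result for groups from \cite{GPP}, using the fact that $\RRiT_R = \RRiD_R$ is a two-sided ideal (Proposition \ref{prop:con} and Corollary \ref{cor:id}) so that $\RR/\RRiT_R$ is naturally a ring, and that, by Fact \ref{fact: profinite rings = compact rings}, this quotient — being compact Hausdorff as a topological ring in the logic topology — is in fact profinite, hence in particular a genuine compact unital topological ring. First I would check that the natural map $\phi\colon R \to \RR/\RRiT_R$, $r \mapsto r/\RRiT_R$, is a definable ring homomorphism with dense image: it is a ring homomorphism since $\RRiT_R$ is a two-sided ideal; its image is dense because a proper closed subring of the profinite (hence compact) ring $\RR/\RRiT_R$ containing the image of $R$ would pull back to a proper type-definable-over-$R$ subring of $\RR$ of bounded index containing $\RRiT_R$, contradicting minimality; and definability of $\phi$ follows exactly as in the group case, since disjoint closed subsets of $\RR/\RRiT_R$ pull back to disjoint $R$-type-definable subsets of $\RR$, which can be separated by disjoint definable subsets of $R$ by a standard compactness argument (this is precisely how definability of the quotient map $G \to \G/\G^{00}$ is verified in \cite{GPP}).

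Next I would verify the universal property. Let $\phi'\colon R \to C'$ be any definable compactification, with $C'$ a compact unital topological ring. Passing to the monster model, $\phi'$ extends (using $\kappa$-saturation and the definability of $\phi'$, exactly as in \cite{GPP} for groups) to a continuous ring homomorphism $\bar\phi'\colon \RR \to C'$; its kernel $\ker(\bar\phi')$ is a two-sided ideal of $\RR$ which is type-definable over $R$ (the preimages of the closed sets $C' \setminus U$ for $U$ a basic neighbourhood of $0$) and of bounded index (since $|C'| \leq 2^{|R|+|\LL|} < \kappa$ as $C'$ is a definable compactification). By the defining property of $\RRiT_R = \RRiT_{R,ideal}$ as the smallest such ideal, we get $\RRiT_R \subseteq \ker(\bar\phi')$, so $\bar\phi'$ factors through a ring homomorphism $f\colon \RR/\RRiT_R \to C'$ with $f\circ\phi = \phi'$. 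Continuity of $f$ follows because the preimage under $f$ of any closed set is closed in the logic topology (its preimage in $\RR$ is type-definable over $R$), and $f$ is automatically unique on the dense image of $\phi$, hence unique by continuity.

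The only genuine subtlety — the step I expect to be the main obstacle — is the verification that $\phi$ is \emph{definable} and, dually, that an arbitrary definable compactification $\phi'$ extends to a continuous homomorphism on $\RR$. These are not purely formal: they require the compactness-and-saturation argument from \cite{GPP} showing that a definable function into a compact Hausdorff space has a canonical extension to the monster, and that the induced equivalence relation is type-definable over the original parameters. I would handle this by noting that everything here is applied to the underlying definable \emph{sets} and the additive group structure, so the group-case arguments of \cite{GPP} apply verbatim; the ring structure only enters through the already-established fact that $\RRiT_R$ is a two-sided ideal and through Fact \ref{fact: profinite rings = compact rings} guaranteeing that the quotient really is a topological ring of the required kind. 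With these two inputs in hand, the proof reduces to the group-theoretic template, and uniqueness up to isomorphism is then immediate from the universal property in the usual way.
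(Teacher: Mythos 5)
Your overall strategy matches the paper's own proof, which simply defers to Proposition 3.4 of \cite{GPP} with the remark that the lifting argument works equally well for ring homomorphisms; you correctly identify the two ring-specific inputs (the two-sided-ideal structure of $\RRiT_R$ from Proposition \ref{prop:con}, and Fact \ref{fact: profinite rings = compact rings} to ensure the quotient is a profinite topological ring) and observe the rest follows the group-theoretic template.

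One step as written does not work, though: your density argument. The pullback of a proper closed subring of $\RR/\RRiT_R$ containing $\phi(R)$ is indeed an $R$-type-definable subring of $\RR$ of bounded index containing $\RRiT_R$, but that does \emph{not} contradict the minimality of $\RRiT_R$ --- minimality only asserts that $\RRiT_R$ is the smallest such object, not the unique one, so a larger one is perfectly allowed. The standard argument (the one GPP uses for groups, and the one you want here) is different: the pullback $H$ of the closure of $\phi(R)$ is $R$-type-definable and contains $R$ itself, hence $H = \RR$. Concretely, write $H = \bigcap_i \psi_i(\RR)$ with each $\psi_i$ over $R$; from $R \subseteq H \subseteq \psi_i(\RR)$ we get $\psi_i(R) = R$, so $R \models \forall x\, \psi_i(x)$, and by elementarity $\psi_i(\RR) = \RR$ for every $i$, forcing $H = \RR$ and thus density of $\phi(R)$. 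With that correction, the remaining verifications (definability of $\phi$, extension of $\phi'$ to $\RR$ by saturation, factoring through $\RR/\RRiT_R$ via minimality of $\RRiT_R$ among type-definable two-sided ideals, continuity and uniqueness of $f$) go through exactly as you describe and as the paper intends.
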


	\begin{proof}
		This is proven similarly to Proposition 3.4 of \cite{GPP}, as the argument about lifting a group homomorphism also works for ring homomorphisms. 
	\end{proof}

The above definition and proposition are valid also for non-unital rings.
Let us note that in contrast with groups, by Fact \ref{fact: profinite rings = compact rings}, the definable Bohr compactification of a unital ring $R$ coincides with the universal definable, profinite compactification of $R$.


\subsection{Model-theoretic connected components of topological rings and the classical Bohr compactification}\label{subsection:Bohr comp. top.}

In \cite{GPP} and in Section 2 of \cite{krupil}, the classical Bohr compactification of a topological group $G$ was described as $\bar G/{\bar G}^{00}_{\topo}$, where $G$ is equipped with a structure in which all open sets are 0-definable (e.g. with the full structure), where ${\bar G}^{00}_{\topo}$ can be described as the smallest 0-type-definable, bounded index subgroup of $\bar G$ containing the infinitesimals. In fact, several equivalent definitions of ${\bar G}^{00}_{\topo}$ are given in  Section 2 of \cite{krupil}.

Here, we want to present an analog for topological rings, describing their Bohr compactifications (which coincide with the universal profinite compactifications for unital rings) in terms of a suitable component, where the {\em Bohr compactification} of a topological ring $R$ is, of course, defined as the unique universal (ring) compactification of $R$.

Let $R$ be a ring 0-definable in a structure $M$ so that all open subsets of $R$ are 0-definable (e.g. $M=R$ is equipped with the full structure). Let $\mu$ be the ring of infinitesimal elements in $\bar R$. Then $R\mu \subseteq \mu$, but $\mu$ is not necessarily a left ideal of $\bar R$. 

\begin{lemma}\label{lemma: mu + 00 is an ideal}
$\mu + \RRiT_M$ is a two-sided ideal of $\bar R$.
\end{lemma}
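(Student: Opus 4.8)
The plan is to show that $\mu + \RRiT_M$ is closed under addition, under left multiplication by arbitrary elements of $\RR$, and under right multiplication by arbitrary elements of $\RR$. Additive closure is clear: both $\mu$ and $\RRiT_M$ are subgroups of $(\RR,+)$, and since $\RRiT_M$ is in fact a (two-sided) ideal, hence closed under addition, the sum $\mu + \RRiT_M$ is again a subgroup. So the real content is the two multiplicative closure statements, and by symmetry (the argument for right multiplication being completely analogous) I will focus on left multiplication.

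First I would record the relevant known facts. By hypothesis $R\mu \subseteq \mu$ — this is stated just before the lemma — which gives $r \cdot m \in \mu$ for $r \in R$ and $m \in \mu$; but I need this for $r$ ranging over all of $\RR$, not just over $R$. Here I would use $0$-definability of the open sets together with saturation: $\mu = \bigcap_{U} \bar U$ where $U$ ranges over the $0$-definable open neighborhoods of $0$, and for each such $U$ there is a $0$-definable open $V$ with $V \cdot V \subseteq U$ (continuity of multiplication at $(0,0)$, expressible as a first-order statement true in $R$ hence in $\RR$); applying this iteratively, for any fixed $r\in\RR$ one can absorb $r$ into a larger neighborhood, so $r \cdot \mu \subseteq \mu$ for all $r \in \RR$. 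Thus $\mu$ is already a left ideal of $\RR$ (and similarly a right ideal), which actually makes $\mu$ itself a two-sided ideal; but I will only need $R\mu\subseteq\mu$ as literally stated plus this strengthening. Second, $\RRiT_M$ is a two-sided ideal of $\RR$ by Proposition \ref{prop:con} and Corollary \ref{cor:id}, so $\RR \cdot \RRiT_M \subseteq \RRiT_M$.

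Now for left multiplication: take $r \in \RR$ and an element $m + t$ with $m \in \mu$, $t \in \RRiT_M$. Then $r(m+t) = rm + rt$. We have $rm \in \mu$ by the strengthening above (or, more carefully, one can first reduce to $r \in R$: write $r = r_0 + \epsilon$... but in fact I do not need this reduction if I have $\RR\mu\subseteq\mu$), and $rt \in \RRiT_M$ since $\RRiT_M$ is an ideal. Hence $r(m+t) \in \mu + \RRiT_M$. The right-multiplication case is identical, using $\mu \RR \subseteq \mu$ and that $\RRiT_M$ is a right ideal. This completes the proof.

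The main obstacle — or rather the one point requiring a little care rather than a routine check — is justifying that $\RR \cdot \mu \subseteq \mu$ (and $\mu \cdot \RR \subseteq \mu$), i.e. upgrading the hypothesis $R\mu \subseteq \mu$ from standard scalars to all of $\RR$. If one is unwilling to invoke the continuity-of-multiplication argument, there is a cleaner route: since $\RRiT_M \subseteq \mu$ is false in general but $\mu \subseteq \RR$ and $R\mu\subseteq\mu$ holds, note that any $r\in\RR$ lies in some coset $r_0 + \mu'$... actually the simplest honest justification is exactly the neighborhood-basis argument sketched above, which is short. Alternatively, if the paper's setup guarantees $\mu$ is an ideal of $\RR$ directly, the whole lemma collapses to: sum of two ideals is an ideal. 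Either way, once $\RR\mu\subseteq\mu$ is in hand the lemma is immediate, so I expect the write-up to be only a few lines.
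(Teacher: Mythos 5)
Your proposal has a genuine gap. The whole plan hinges on upgrading $R\mu \subseteq \mu$ to $\RR\mu \subseteq \mu$, and this upgrade is false in general. In fact the paper explicitly flags this immediately before the lemma: ``$R\mu \subseteq \mu$, but $\mu$ is not necessarily a left ideal of $\bar R$.'' A concrete counterexample: take $R = \R$ with the usual topology, so $\mu$ is the ring of infinitesimals in $\bar{\R}$. Pick an infinite $r \in \bar{\R}$ and set $\epsilon = 1/r$; then $\epsilon \in \mu$ but $r\epsilon = 1 \notin \mu$. Your continuity argument does not survive the passage to $\bar R$: continuity of multiplication at $(r_0, 0)$ is expressible in $R$ for each \emph{standard} $r_0$, with witnesses being $0$-definable neighborhoods of $r_0$ and of $0$, but a nonstandard $r \in \RR$ need not lie in the interpretation of any definable set $W$ for which $W\cdot V \subseteq U$ holds in $R$. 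The ``absorption'' step you sketch simply does not exist for nonstandard scalars. Consequently $\mu$ alone is \emph{not} an ideal of $\RR$, and the lemma does not collapse to ``sum of two ideals is an ideal.''

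The paper's actual proof is considerably more delicate and is precisely designed to sidestep this obstacle. Setting $G := \mu + \RRiT_M$ (an $M$-type-definable bounded-index subring), it forms the largest left ideal contained in $G$, namely $J(G) := \bigcap_{r\in\RR} f_r^{-1}[G]$, and shows $J(G) = G$. The containment $G \subseteq J(G)$ is proved by compactness: given any formula $\varphi$ in the type defining $J(G)$, one extracts finitely many nonstandard scalars $s_0,\dots,s_{n-1}$ witnessing the relevant finite sub-intersection, and then uses elementarity ($R \prec \RR$, with the defining types over $M$) to replace these by \emph{standard} scalars $r_0,\dots,r_{n-1} \in R$. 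Only at that point is $R\mu \subseteq \mu$ invoked --- together with $\RRiT_M$ being an ideal --- to conclude $G \subseteq f_{r_i}^{-1}[G]$ for each $i$. This reduction to finitely many standard scalars is the essential idea your proposal is missing.
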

\begin{proof}
Put $G:= \mu + \RRiT_M$, an $M$-type-definable subring of bounded index. As before, for any $r \in \RR$, let $f_r \colon \RR \to \RR$ be given by $f_r(x):=r \cdot x$. Let  $J(G) := \bigcap_{r \in \RR} f_r^{-1}[G]$ (intersected additionally with $G$, if one wishes to drop our general assumption that $R$ is unital); it is the largest left ideal of $\RR$ contained in $G$. By Remak \ref{rem: trivial remark} and the proof of Lemma \ref{lem:groupIdealLR}, we can find a small set $S$ (e.g. a set of representatives of cosets of $G$ in $\RR$ which contains $1$) such that $J(G)= \bigcap_{r \in S} f_r^{-1}[G]$ and so $J(G)$ is $M$-type-definable. We will prove that $J(G)=G$, which shows that $G$ is a left ideal. Then the right version of this argument shows that $G$ is  also a right ideal, so we will be done.

We need to show that $G \subseteq J(G)$.
Let $G(x)$ be the partial type defining $G$, and $J(x)$ --- the partial type defining $J(G)$. Both types are with parameters from $M$.
Take any formula $\varphi(x) \in J(x)$.  It is enough to show that $G \subseteq \varphi(\RR)$.
By compactness, we can find $\psi(x) \in G(x)$ and $s_0,\dots,s_{n-1} \in S$ such that $f_{s_0}^{-1}[\psi(\RR)]  \cap \dots \cap f_{s_{n-1}}^{-1}[\psi(\RR)]] \subseteq \varphi(\RR)$. So we can find $r_0,\dots,r_{n-1} \in R$ such that $f_{r_0}^{-1}[\psi(\RR)] \cap \dots \cap f_{r_{n-1}}^{-1}[\psi(\RR)] \subseteq \varphi(\RR)$.  Since for every $r \in R$, $G \subseteq f_r^{-1}[G]$, we have $G \subseteq f_{r_0}^{-1}[\psi(\RR)] \cap \dots \cap f_{r_{n-1}}^{-1}[\psi(\RR)] \subseteq \varphi(\RR)$. 
\end{proof}

\begin{definition}
$\RRiT_{\topo}: = \mu + \RRiT_M$.
\end{definition}

Since $\RRiT_M$ is the smallest $M$-type-definable two-sided ideal [ring] of bounded index, we get the following corollary.

\begin{corollary}\label{corollary: charact. of R^00_topo}
$\RRiT_{\topo}$ is the smallest $M$-type-definable, bounded index two-sided ideal containing $\mu$ and also the smallest $M$-type-definable, bounded index ring containing $\mu$.
\end{corollary}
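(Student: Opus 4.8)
The statement to prove is Corollary \ref{corollary: charact. of R^00_topo}: that $\RRiT_{\topo} = \mu + \RRiT_M$ is the smallest $M$-type-definable, bounded index ideal containing $\mu$, and also the smallest $M$-type-definable, bounded index ring containing $\mu$.

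The plan is to derive this directly from Lemma \ref{lemma: mu + 00 is an ideal} together with the defining property of $\RRiT_M$ as the smallest $M$-type-definable two-sided ideal (equivalently, subring) of $\bar R$ of bounded index, as recorded in Proposition \ref{prop:con} and the notation fixed afterwards. First I would note that $\RRiT_{\topo} = \mu + \RRiT_M$ is itself an $M$-type-definable two-sided ideal of $\bar R$ (by Lemma \ref{lemma: mu + 00 is an ideal}), it has bounded index (since it contains the bounded-index subgroup $\RRiT_M$), and it contains $\mu$; so it is a candidate for ``smallest $M$-type-definable, bounded index ideal containing $\mu$'' and a fortiori for ``smallest such ring''. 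It remains to check minimality.

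For minimality among ideals, let $I$ be any $M$-type-definable two-sided ideal of $\bar R$ of bounded index with $\mu \subseteq I$. Since $\RRiT_M$ is the smallest $M$-type-definable bounded index ideal, we have $\RRiT_M \subseteq I$; combined with $\mu \subseteq I$ and the fact that $I$ is closed under addition, we get $\mu + \RRiT_M \subseteq I$, i.e. $\RRiT_{\topo} \subseteq I$. For minimality among rings, the argument is the same: if $S$ is any $M$-type-definable bounded index subring of $\bar R$ with $\mu \subseteq S$, then by Proposition \ref{prop:con} (which identifies the smallest type-definable bounded index subring with $\RRiT_M$) we again get $\RRiT_M \subseteq S$, hence $\RRiT_{\topo} = \mu + \RRiT_M \subseteq S$. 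Since $\RRiT_{\topo}$ is both an ideal (hence a ring) of the required kind, it is simultaneously the smallest in both classes.

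I do not anticipate a real obstacle here; the corollary is essentially a bookkeeping consequence of Lemma \ref{lemma: mu + 00 is an ideal}. The only point worth stating carefully is that containing the bounded-index subgroup $\RRiT_M$ forces bounded index of $\RRiT_{\topo}$ (so it genuinely lies in both classes being minimized over), and that the defining minimality of $\RRiT_M$ applies in both the ``ideal'' and the ``ring'' versions — which is exactly what Proposition \ref{prop:con} guarantees. So the write-up is short:

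\begin{proof}
By Lemma \ref{lemma: mu + 00 is an ideal}, $\RRiT_{\topo} = \mu + \RRiT_M$ is an $M$-type-definable two-sided ideal of $\bar R$; it contains $\mu$, and it contains the bounded index subgroup $\RRiT_M$, hence has bounded index itself. In particular $\RRiT_{\topo}$ belongs both to the family of $M$-type-definable, bounded index ideals containing $\mu$ and to the (larger) family of $M$-type-definable, bounded index subrings containing $\mu$.

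For minimality, let $S$ be any $M$-type-definable, bounded index subring of $\bar R$ with $\mu \subseteq S$. By Proposition \ref{prop:con} and the notation fixed afterwards, $\RRiT_M$ is the smallest $M$-type-definable, bounded index subring of $\bar R$, so $\RRiT_M \subseteq S$. Since $S$ is closed under addition and $\mu \subseteq S$, we obtain $\RRiT_{\topo} = \mu + \RRiT_M \subseteq S$. This shows $\RRiT_{\topo}$ is the smallest $M$-type-definable, bounded index ring containing $\mu$, and the same computation (applied to an arbitrary such ideal $I$ in place of $S$) shows it is also the smallest $M$-type-definable, bounded index ideal containing $\mu$.
\end{proof}
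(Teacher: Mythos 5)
Your proof is correct and takes essentially the same approach as the paper: the paper derives the corollary directly from Lemma \ref{lemma: mu + 00 is an ideal} together with the fact that $\RRiT_M$ is the smallest $M$-type-definable, bounded index ideal (equivalently, subring), which is exactly the bookkeeping argument you wrote out.
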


\begin{proposition}
The quotient map $\pi \colon R \to \RR/\RRiT_{\topo}$ is the Bohr compactification of the topological ring $R$.
\end{proposition}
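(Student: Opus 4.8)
The plan is to mimic the proof that $\bar G/\bar G^{00}_{\topo}$ is the Bohr compactification of a topological group, adapting it to rings using the results already established. First I would verify that $\pi\colon R\to \RR/\RRiT_{\topo}$ is indeed a compactification of the topological ring $R$, i.e. that it is a compact Hausdorff topological ring, that $\pi$ is continuous, and that $\pi$ has dense image. Compactness and Hausdorffness of $\RR/\RRiT_{\topo}$ follow from the logic topology, since $\RRiT_{\topo}=\mu+\RRiT_M$ is an $M$-type-definable two-sided ideal (Lemma \ref{lemma: mu + 00 is an ideal}) of bounded index; in fact by Fact \ref{fact: profinite rings = compact rings} it is even profinite (being a compact unital topological ring). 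Continuity of $\pi$ amounts to checking that preimages of closed sets are type-definable, which holds because every open (hence every closed) subset of $R$ is $0$-definable and $\mu\subseteq\RRiT_{\topo}$, so $\pi^{-1}$ of a closed set $F$ is $\RRiT_{\topo}$ plus an intersection of complements of definable neighborhoods, as in the Example after Definition \ref{def:con}. Density of the image is automatic: the complement of the closure of $\im\pi$ would be an open, hence $0$-definable, set whose $\pi$-preimage misses $R$ while being $\RRiT_{\topo}$-invariant, forcing it to be empty.

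Next I would establish the universal property. Let $\phi'\colon R\to C'$ be any compactification of the topological ring $R$ into a compact Hausdorff (hence, for unital $R$, profinite by Fact \ref{fact: profinite rings = compact rings}) topological ring $C'$. The key step is to show that $\phi'$ factors continuously through $\pi$, i.e. that $\ker$ of the induced map on $\RR$ contains $\RRiT_{\topo}$. Working in the monster model $\RR$, extend $\phi'$ to $\bar\phi'\colon\RR\to C'$ by $\bar\phi'(r)=\lim \phi'$ along the type of $r$ over $M$; continuity of $\phi'$ and compactness of $C'$ make this well-defined, and it is a ring homomorphism. One checks $\bar\phi'(\mu)=\{0\}$ because $\phi'$ is continuous at $0$ and $C'$ is Hausdorff, so $\mu$ is mapped into every neighborhood of $0$. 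Also $\ker\bar\phi'$ is an $M$-type-definable two-sided ideal of bounded index (its index is at most $|C'|<\kappa$), hence contains $\RRiT_M$. Therefore $\ker\bar\phi'\supseteq\mu+\RRiT_M=\RRiT_{\topo}$, giving a well-defined continuous ring homomorphism $f\colon\RR/\RRiT_{\topo}\to C'$ with $\phi'=f\circ\pi$ on $R$; continuity of $f$ follows from the logic topology on the source being the quotient topology and $f^{-1}$ of closed sets being type-definable. Uniqueness of $f$ is clear since $\im\pi$ is dense and $C'$ is Hausdorff.

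The main obstacle — or rather the one point requiring care — is the passage from ``$\phi'$ is a continuous ring homomorphism on $R$'' to ``$\bar\phi'$ is well-defined and continuous on $\RR$ and kills $\mu$''. Here one must be careful that the topology on $R$ is genuinely reflected model-theoretically: this is exactly why we assume all open subsets of $R$ are $0$-definable. Concretely, for $r\in\RR$ and a basic closed neighborhood $D$ of $\phi'(\st\text{-candidate})$ in $C'$, the preimage $\phi'^{-1}[D]$ need not be definable, but one pulls back a suitable pair of disjoint open sets separating points of $C'$ to disjoint definable sets in $R$ by definability of $\phi'$ (Definition \ref{defin}(1)), and saturation then pins down $\bar\phi'(r)$ uniquely. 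An alternative, cleaner route — which I would actually prefer to write up — is to reduce to the already-proved statements: since $C'$ is profinite, $\phi'$ factors as $R\to R/J\to C'$ where $J$ is an $M$-type-definable ideal containing $\mu$ of bounded index (using the Example after Definition \ref{def:con} style argument that continuous ring homs into profinite rings correspond to such ideals), and then Corollary \ref{corollary: charact. of R^00_topo} gives $\RRiT_{\topo}\subseteq J$ directly, so $\phi'$ factors through $\pi$. Either way, combining the compactification check with the universal property yields that $\pi$ is the Bohr compactification of the topological ring $R$.
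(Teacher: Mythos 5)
Your proposal is correct and follows essentially the same route as the paper, which cites the proof of Fact 2.4(ii) of \cite{krupil} as a template and records only the key observation: for any compactification $f\colon R\to C'$, the kernel of the canonical extension $f^{*}\colon\RR\to C'$ is a $0$-type-definable, bounded index ideal containing $\mu$, hence contains $\RRiT_{\topo}$ by Corollary \ref{corollary: charact. of R^00_topo}. Your write-up merely unpacks the details of that cited adaptation (well-definedness of $\bar\phi'$, $\mu\subseteq\ker\bar\phi'$, boundedness of the index, density and continuity), so the two arguments are the same in substance.
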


\begin{proof}
The proof is a straightforward  adaptation of the proof of \cite[Fact 2.4(ii)]{krupil}, so we will skip it. Let us only remark that, using the notation from the proof of \cite[Fact 2.4(ii)]{krupil}, since  $\ker(f^*)$ is a bounded index two-sided ideal which is an intersection of some sets of the form $\bar{U}$ for $U$ open in $R$, we see that it is a $0$-type-definable, bounded index two-sided ideal containing $\mu$, and so $\RRiT_{\topo} \subseteq \ker(f^*)$ by Corollary \ref{corollary: charact. of R^00_topo}.
\end{proof}


	\section{Classical and definable Bohr compactifications of some matrix groups}
	
	
	Our aim in this section is to describe the definable (in particular classical, taking the full structure) Bohr compactifications of some classical discrete groups. 
We focus on the groups $\UT_n(R)$ and $\T_n(R)$ of (respectively) upper unitriangular and invertible upper triangular matrices over a (unital) ring $R$ and describe their type-definable connected components in order to compute their definable Bohr compactifications. This is done in Subsection \ref{sec:unig}. In Subsection \ref{subsection: special rings}, we apply these general considerations to some classical rings $R$ (such as $\Z$ or $K[\bar X]$), computing explicitly 
the definable Bohr compactifications of $\UT_n(R)$ and $\T_n(R)$ for those rings. In the last subsection, we apply our approach to the topological groups $\UT_n(R)$ and $\T_n(R)$ for $R$ being a topological (unital) ring, computing their classical Bohr compactifications.

In this section, we often write matrices where some of the coefficients are replaced with sets of coefficients to denote the set of matrices in which the coefficients can be (independently) chosen from the sets that replace them. Similarly, we replace submatrices with sets of submatrices.
	
	\subsection{Some linear algebra over rings}\label{subsection: linear algebra}
	
	

	First, we analyze the structure of the group $\UT_n(R)$ for a unital ring $R$. The following belongs to standard linear algebra. A matrix $B \in \UT_{n+1}(R)$ can be written as $\QM{A}{v}{0}{1}$
	for some $A \in \UT_{n}(R)$ and $v \in R^n$. The map \mbox{$\psi \colon \UT_{n+1}(R) \to \UT_{n}(R)$} given by sending $B$ to its upper-left $n \times n$ submatrix $A$ is a group epimorphism. Its kernel consists of all matrices of the form $\QM{I}{v}{0}{1}$, $v \in R^n$, and is naturally isomorphic to $(R,+)^n$. The short exact sequence
	\[1 \to (R,+)^n \to \UT_{n+1}(R) \xrightarrow{\psi} \UT_{n}(R) \to 1\]
	splits via the map $s \colon \UT_{n}(R) \to \UT_{n+1}(R)$ which sends $A$ to $\QM{A}{0}{0}{1}$. Hence, $\UT_{n+1}(R)$ becomes a semidirect product $\UT_{n}(R) \ltimes_\phi (R,+)^n$. With a direct calculation, we verify that the action $\phi\colon \UT_{n}(R) \to \mathrm{Aut}((R,+)^n)$ is just the standard action of $\UT_{n}(R)$ on the $R$-module $R^n$:
	\begin{align*}
	\QM{A}{0}{0}{1}
	\QM{I}{v}{0}{1}
	\QM{A}{0}{0}{1}^{-1}  =  \QM{A}{Av}{0}{1}
	\QM{A^{-1}}{0}{0}{1} = 	\QM{I}{Av}{0}{1}.
	\end{align*}
	Thus, the group operation in \mbox{$\UT_{n}(R) \ltimes_\phi (R,+)^n$} is just \mbox{$(A,v)\cdot(A',v') = (AA', v + Av')$}.

	
	We now perform a similar analysis for $\T_n(R)$. First, consider the following variant of the semidirect product of groups. Suppose that $K, H$ and $N$ are groups, and that there are: a left action $\phi_1 \colon K \to \Aut(N)$ and a right action $\phi_2 \colon H \to \Aut(N)$. For $k \in K, h \in H, n \in N$, write $kn$ and $nh$ in place of $\phi_1(k)(n)$ and $\phi_2(h)(n)$, respectively. The set $K \times H \times N$ can be equipped with the following operation:
	\[(k, h, n) \cdot (k', h', n') = \left(k k', h h', ( k n')(n h')\right).\]
	It is easy to see that this is a group operation if and only if both actions commute, that is if $k(nh) = (kn)h$ for all $k \in K, h \in H, n \in N$. In that case, we will denote such a group as $(K, H) \ltimes_{\phi_1}^{\phi_2} N$. The groups $K \times H$ and $N$ are naturally embedded in $(K, H) \ltimes_{\phi_1}^{\phi_2} N$ as $K \times H \times \{1\}$, and $\{1\} \times \{1\} \times N$ respectively. The subgroup $N \leq (K, H) \ltimes_{\phi_1}^{\phi_2} N$ is normal. The action of $K \times H$ on $N$ by conjugation is as follows:
	\[(k, h, 1) \cdot (1, 1, n) \cdot (k, h, 1)^{-1} = (k, h, 1) \cdot (1, 1, n) \cdot (k^{-1}, h^{-1}, 1) = (1,1,knh^{-1}).\]
	
	Note that if either of the actions $\phi_1, \phi_2$ is trivial, then $(K, H) \ltimes_{\phi_1}^{\phi_2} N$ is just a semidirect product of $K \times H$ and $N$.
	
	Now, consider a matrix $B \in \T_{n+1}(R)$. It can be written as $\QM{A}{v}{0}{r}$
	for some $A \in \T_{n}(R), v \in R^n$, and $r \in R^*$. We consider a product of two matrices represented this way:
	\begin{align*}
	\QM{A}{v}{0}{r}
	\QM{A'}{v'}{0}{r'}  =  \QM{AA'}{Av' + vr'}{0}{rr'}.
	\end{align*}
	From the calculation above, it follows that $\T_{n+1}(R)$ is isomorphic to the group $\left(\T_n(R), R^*\right) \ltimes_{\phi_1}^{\phi_2} (R^n, +)$ with $A \in \T_n(R)$ acting on $R^n$ by $v \mapsto Av$, and $R^*$ acting on $R^n$ by $v \mapsto vr$. Hence, the conjugate of $v \in R^n$ by $(A, r) \in \T_n(R) \times R^*$ is $(Av)r^{-1} = A(vr^{-1})$.

\subsection{Discrete triangular groups}\label{sec:unig}

Recall that $R$ is a unital ring, and $A\subset\RR$ is a small set of parameters.

Our first goal is to describe $\UT(\RR)^{00}_A$, the $A$-type-definable connected component of $\UT_n(\RR)$, along with the quotient $\UT(\RR)/\UT(\RR)^{00}_A$.  
In particular, for $A :=R$, we get a description of the definable Bohr compactification of $\UT_n(R)$; working in  $\Lsetp{R}$, this compactification coincides with the classical Bohr compactification of the discrete group $\UT_n(R)$.


A natural candidate for the component is $\UT_n(\RRiT_A)$.
However, we will see that in general it may happen that $\UT(\RR)^{00}_A \lneq \UT_n(\RRiT_A)$.

Define a sequence $I_{i,A}(\RR)$, $i\in\N_{> 0}$, of $A$-type-definable subgroups of $\RRa$ as follows:

	\begin{itemize}
	    \item $I_{1,A}(\RR) = \RRaT_A$,
	    \item for $i > 0$ let $I_{i+1,A}(\RR)$ be the smallest $A$-type-definable subgroup of $\RRa$ containing the set $\RR \cdot I_{i,A}(\RR)$.
	\end{itemize}
	We clearly have
	\[\RRaT_A = I_{1,A}(\RR) \leq I_{2,A}(\RR) \leq \ldots \leq I_{i,A}(\RR) \leq \ldots \leq \RRiT_A.\]

	Moreover, if for some $i \in \N_{>0}$ the group $I_{i,A}(\RR)$ is a two-sided ideal (or just left ideal) of $\RR$, then $I_{j,A}(\RR) = \RRiT_A$ for all $j \geq i$. Conversely, if $I_{j,A}(\RR)$ is constant for $j \geq i$, then $I_{i,A}(\RR) =  \RRiT_A$ is an ideal. Indeed, since $I_{i,A}(\RR) = I_{i+1,A}(\RR)$, it is a bounded index, $A$-type-definable left ideal contained in $\RRiT_A$, and so it coincides  with $\RRiT_A$ by Corollary \ref{cor:id}.
	
	When $\RR$ and $A$ are fixed, we will omit the parameters and write $I_i$ to denote $I_{i,A}(\RR)$. 

	\begin{proposition}
		\label{prop:utn00}
		\[\UT_n(\RR)^{00}_A  =
		\begin{pmatrix}
		1 & I_1 & I_2 & \ldots & I_{n-2} & I_{n-1} \\
		0 & 1 & I_1 & \ldots & I_{n-3} & I_{n-2} \\
		0 & 0 & 1 & \ldots & I_{n-4} & I_{n-3} \\
		\vdots & \vdots & \vdots & \ddots & \vdots & \vdots \\
		0 & 0 & 0 & \ldots & 1 & I_1 \\
		0 & 0 & 0 & \ldots & 0 & 1 \\
		\end{pmatrix}.\]
	\end{proposition}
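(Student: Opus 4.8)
The plan is to proceed by induction on $n$ using the semidirect product decomposition $\UT_{n+1}(\RR) \cong \UT_n(\RR) \ltimes_\phi (\RR,+)^n$ established in Subsection \ref{subsection: linear algebra}, where $\UT_n(\RR)$ acts on $(\RR,+)^n$ by the standard matrix-on-column-vector action and the group law is $(A,v)\cdot(A',v') = (AA', v+Av')$. The base case $n=1$ is trivial (the group is trivial), and $n=2$ reduces to the statement $(\RR,+)^{00}_A = I_1 = \RRaT_A$, which is the definition. For the inductive step, I would write the candidate subgroup on the right-hand side as $H_{n+1} := \UT_n(\RR)^{00}_A \ltimes V$, where $V \leq (\RR,+)^n$ is the subgroup whose $i$-th coordinate (counting from the top, i.e.\ the entry in row $i$, column $n+1$) is $I_{n+1-i}$; by induction the first factor is correctly identified, so the task is to show $\UT_{n+1}(\RR)^{00}_A$ projects onto $\UT_n(\RR)^{00}_A$ under $\psi$ with kernel exactly $V$.

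The two inclusions I would handle as follows. For $\UT_{n+1}(\RR)^{00}_A \subseteq H_{n+1}$: first, $H_{n+1}$ is $A$-type-definable (each $I_i$ is) and of bounded index (each $I_i$ has bounded index in $(\RR,+)$, and $\UT_n(\RR)^{00}_A$ has bounded index in $\UT_n(\RR)$), so by minimality of the connected component it suffices to check $H_{n+1}$ is a \emph{subgroup}; this is where the definition of the $I_i$ enters, since conjugating/multiplying within $H_{n+1}$ produces products like $r\cdot x$ with $r\in\RR$, $x\in I_i$, which must land in $I_{i+1}$ — this is exactly the closure condition used to define $I_{i+1,A}(\RR)$ as the smallest $A$-type-definable subgroup containing $\RR\cdot I_{i,A}(\RR)$. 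For the reverse inclusion $H_{n+1}\subseteq \UT_{n+1}(\RR)^{00}_A$: the projection $\psi$ sends $\UT_{n+1}(\RR)^{00}_A$ onto a bounded-index $A$-type-definable subgroup of $\UT_n(\RR)$, hence onto something containing $\UT_n(\RR)^{00}_A$, and combined with $\psi$ being surjective (and $\UT_n(\RR)^{00}_A$ being the full image, by Remark \ref{remark: f[00]=00}-style reasoning applied to the restriction) one gets that $\UT_{n+1}(\RR)^{00}_A$ contains a copy of $\UT_n(\RR)^{00}_A$ via the splitting $s$; it then remains to show the kernel part $\{I\}\ltimes V$ is contained in $\UT_{n+1}(\RR)^{00}_A$. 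For this I would show $K := \UT_{n+1}(\RR)^{00}_A \cap (\{I\}\ltimes(\RR,+)^n)$ is a bounded-index $A$-type-definable subgroup of $(\RR,+)^n$ which is invariant under the action $\phi$ of $\UT_n(\RR)$ (equivalently of $\UT_n(\RR)^{00}_A$, but better the whole group since $\UT_{n+1}(\RR)^{00}_A$ is normal), and then analyze coordinatewise: the bottom coordinate's projection is a bounded-index $A$-type-definable subgroup of $(\RR,+)$, hence contains $\RRaT_A = I_1$; and the action of the unitriangular matrices moves the bottom coordinate up into higher coordinates after multiplying by ring elements, forcing the $i$-th coordinate to contain $I_{n+1-i}$ by the very recursion defining the $I_j$.

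The main obstacle I anticipate is the bookkeeping in this last step: showing that the $\phi$-invariant bounded-index subgroup $K \leq (\RR,+)^n$ is \emph{forced} to contain all of $V$ and not something smaller. Concretely, one needs that applying elementary unitriangular matrices (which add $r$ times one coordinate to another) to elements of $K$ and using that $K$ already contains $I_1$ in the bottom slot generates, coordinate by coordinate, exactly the groups $I_2, I_3, \ldots, I_{n-1}$ in the appropriate slots — and crucially that $K$ contains $I_1$ in \emph{every} slot to start the recursion in each column, which follows because the subdiagonal-type action can shift a bottom-coordinate element straight up (multiplying by the identity block), so projecting $K$ to any single coordinate gives a bounded-index $A$-type-definable subgroup of $(\RR,+)$, hence $\supseteq I_1$. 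Carefully, one should note $K$ need not split as a product of its coordinate projections, so the argument must be done by downward induction on the coordinate (or on column), peeling off the already-controlled part. Once this is set up, the matching upper bound from the first inclusion pins $K = V$ exactly, completing the induction.
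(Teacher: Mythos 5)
Your strategy is the same as the paper's: induct on $n$ via the semidirect decomposition $\UT_{n+1}(\RR) \cong \UT_n(\RR) \ltimes_\phi (\RR,+)^n$, and pin down $\UT_{n+1}(\RR)^{00}_A$ by minimality of the component together with the recursion defining the $I_i$. The paper just modularizes this into Corollary \ref{cor:semi} (for a definable $G = H \ltimes_\phi N$, $\G^{00}_A = \bar H^{00}_A \ltimes_\phi N'$ with $N'$ the smallest $A$-type-definable, bounded-index, $\bar H$-invariant subgroup of $\bar N$) and Lemma \ref{lem:utninv}(i) (identifying $N' = I_n \times \cdots \times I_1$), after which the proposition is a two-line induction.

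Two spots in your write-up should be tightened. First, in the lower bound you infer that $\UT_{n+1}(\RR)^{00}_A$ contains $s[\UT_n(\RR)^{00}_A]$ from the fact that $\psi$ maps $\UT_{n+1}(\RR)^{00}_A$ onto $\UT_n(\RR)^{00}_A$ (a Remark \ref{remark: f[00]=00}-type argument); but surjectivity of the image does not by itself place the section inside the component. The correct step, used in Lemma \ref{lem:twosemi}/Corollary \ref{cor:semi}, is that $\UT_{n+1}(\RR)^{00}_A \cap s[\UT_n(\RR)]$ is an $A$-type-definable, bounded-index subgroup of $s[\UT_n(\RR)]$ and hence contains $s[\UT_n(\RR)^{00}_A]$. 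Second, your claim that one must first establish $I_1$ in \emph{every} slot before running the column recursion is a detour: the argument in Lemma \ref{lem:utninv}(i) proceeds by induction upward from the bottom entry of the last column, proving $N' \cap S_k \supseteq I_k \times \cdots \times I_1$; one only needs $I_1$ in the bottom slot to start, and then each step takes $x \in I_k$ in slot $k$, applies an elementary unitriangular matrix and subtracts to land $rx$ in slot $k+1$ alone, so that the resulting $A$-type-definable bounded-index subgroup of coordinate $k+1$ contains $\RR \cdot I_k$ and hence $I_{k+1}$. Your closing remark about doing the induction coordinate by coordinate, ``peeling off the already-controlled part,'' is exactly right and supersedes the ``$I_1$ in every slot'' sentence.
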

	
	While the groups $I_i$ need not be (two-sided) ideals, if $i, j < k$, then for any coset $a + I_i \in \RRa/I_i$ and $b + I_j \in \RRa/I_j$ we have $(a + I_i)(b + I_j) \subseteq ab + I_k$; that is, the coset $ab + I_k$ is well-defined. Consequently, if $S = \sum_s v_s w_s$ where each $w_s$ and each $v_s$ is a coset of $I_{i_s}$ and $I_{j_s}$, respectively, then $S$ can be unambiguously considered as an element of $\RRa/I_{k}$ for any $k$ such that $i_s, j_s < k$ for all $s$. In the result below, the group operation on the set of matrices is defined using this identification.

	\begin{proposition}
		\label{prop:utn00quot}
 The definable Bohr compactification of the (discrete) group $\UT_n(R)$ is
		\[\UT_n(\RR)/\UT_n(\RR)^{00}_R  \cong
		\begin{pmatrix}
		1 & B/I_1 & B/I_2 & \ldots & B/I_{n-2} & B/I_{n-1} \\
		0 & 1 & B/I_1 & \ldots & B/I_{n-3} & B/I_{n-2} \\
		0 & 0 & 1 & \ldots & B/I_{n-4} & B/I_{n-3} \\
		\vdots & \vdots & \vdots & \ddots & \vdots & \vdots \\
		0 & 0 & 0 & \ldots & 1 & B/I_1 \\
		0 & 0 & 0 & \ldots & 0 & 1 \\
		\end{pmatrix},\]
		where $B := \RRa$ and $\cong$ is a topological group isomorphism, with the right hand side equipped with the product topology induced from the logic topologies on the quotients $B/I_i$. 
The quotient $B/I_1$ is exactly the definable Bohr compactification of $(R,+)$.

More precisely, the definable Bohr compactification of $\UT_n(R)$ is the homomomorphism from $\UT_n(R)$ to the above group of matrices given coordinatewise as the quotients by the appropriate $I_i$'s.
	\end{proposition}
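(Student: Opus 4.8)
The plan is to deduce the statement from Proposition \ref{prop:utn00} together with Fact \ref{cor:gpp} (or rather Proposition \ref{prop: def bohr comp. of R} in the ring case and its group analogue from \cite{GPP}), which identifies the definable Bohr compactification of $\UT_n(R)$ with the quotient $\UT_n(\RR)/\UT_n(\RR)^{00}_R$ equipped with the logic topology. Concretely, I would first invoke Proposition \ref{prop:utn00} with $A := R$ to get the explicit description of $N := \UT_n(\RR)^{00}_R$ as the group of upper unitriangular matrices whose $(i,j)$-entry lies in $I_{j-i}$ (for $j>i$), where $I_k = I_{k,R}(\RR)$. Then the quotient $\UT_n(\RR)/N$ is, as a set, in bijection with the product $\prod_{i<j} B/I_{j-i}$ via the map sending a matrix (or rather its coset) to the tuple of cosets of its entries; one must check this is well-defined, i.e. that the entrywise coset $a_{ij} + I_{j-i}$ depends only on the coset of the matrix modulo $N$, which is immediate since $N$ consists exactly of matrices with $(i,j)$-entry in $I_{j-i}$.

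The second step is to verify that under this bijection the induced group operation on $\prod_{i<j} B/I_{j-i}$ is the one described in the statement, namely the usual matrix multiplication formula $(AA')_{ij} = \sum_{k} A_{ik}A'_{kj}$ read modulo $I_{j-i}$. The point here is precisely the remark preceding the proposition: if $i<k<j$ then $A_{ik} \in B/I_{k-i}$ and $A'_{kj} \in B/I_{j-k}$, and since $(a+I_{k-i})(b+I_{j-k}) \subseteq ab + I_{j-i}$ (because $R \cdot I_{j-k} \subseteq I_{j-k+1} \leq I_{j-i}$ and $I_{k-i} \cdot \text{anything}$ is absorbed after enough steps — more carefully, one uses $I_{k-i} \leq I_{j-i}$ and the defining property $R\cdot I_m \subseteq I_{m+1}$ iteratively, exactly as stated in the paragraph before the proposition), the product $\sum_k A_{ik}A'_{kj}$ is a well-defined element of $B/I_{j-i}$. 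So the multiplication on the quotient is literally matrix multiplication with entries interpreted in the appropriate quotients, which is what the displayed formula asserts.

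The third step is the topological identification: the logic topology on $\UT_n(\RR)/N$ agrees with the product of the logic topologies on the factors $B/I_{j-i}$. This follows because $N$ is a finite intersection (indexed by the pairs $i<j$) of the kernels of the projections $\UT_n(\RR) \to \RRa/I_{j-i}$ to individual entries, each of which is $R$-type-definable; a standard fact about the logic topology (the same one used implicitly throughout, e.g. in the proof of Proposition \ref{prop:con}) gives that the quotient by a finite intersection of $R$-type-definable bounded-index normal subgroups is the product of the quotients, as topological groups. Since each $B/I_{j-i}$ with its logic topology is a compact Hausdorff group, so is the product. Finally, the claim that $B/I_1$ is the definable Bohr compactification of $(R,+)$ is just the definition $I_1 = I_{1,R}(\RR) = (\RR,+)^{00}_R$ together with the group analogue of Proposition \ref{prop: def bohr comp. of R} (i.e. the main result of \cite{GPP}), and the last sentence about the compactification map being given coordinatewise by the quotient maps $R \to B/I_i$ is immediate from unravelling the bijection on the image of $\UT_n(R)$ inside $\UT_n(\RR)$.

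I expect the only genuine subtlety to be the bookkeeping in the second step — checking that all the cross-terms $A_{ik}A'_{kj}$ really do land in a common quotient $B/I_{j-i}$ and that associativity of the quotient operation matches associativity of matrix multiplication — but this is exactly the content of the paragraph the authors inserted before the proposition, so it should reduce to a clean induction on $j-i$ using $R \cdot I_m \subseteq I_{m+1}$ and $I_m \leq I_{m+1}$. Everything else (the set-level bijection, the well-definedness modulo $N$, and the topological statement) is routine given Proposition \ref{prop:utn00} and the standard properties of the logic topology recalled in Section \ref{section: preliminaries}.
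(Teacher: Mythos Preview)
Your overall strategy matches the paper's: define the coordinatewise map $F\colon \UT_n(\RR)\to H$, check it is a homomorphism using the coset identification explained before the proposition, identify $\ker F$ with $\UT_n(\RR)^{00}_R$ via Proposition~\ref{prop:utn00}, and then handle the topology. Steps one and two are essentially the paper's argument, just spelled out in more detail.

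The gap is in your third step. You write that $N$ is ``a finite intersection of the kernels of the projections $\UT_n(\RR)\to \RRa/I_{j-i}$ to individual entries'' and then invoke a ``standard fact'' that the quotient by a finite intersection of type-definable bounded-index normal subgroups is the product of the quotients. Neither ingredient holds here. First, for $j-i\geq 2$ the entrywise projection is \emph{not} a group homomorphism (e.g.\ in $\UT_3$ the $(1,3)$-entry of a product involves the cross term $a_{12}b_{23}$), so the set of matrices with $(i,j)$-entry in $I_{j-i}$ is not a subgroup, let alone normal. Second, even for genuine normal subgroups $N_1,\dots,N_m$, the quotient $G/\bigcap N_k$ only \emph{embeds} in $\prod G/N_k$; it is not the full product unless the $N_k$ satisfy a Chinese-remainder-type condition. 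So the ``standard fact'' you cite does not yield the product topology statement.

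The paper's fix is short and worth adopting: once you have the abstract group isomorphism $\UT_n(\RR)/N\cong H$, both sides are compact Hausdorff (the left with the logic topology, the right with the product of logic topologies), so it suffices to check continuity in one direction. The preimage under $F$ of a subbasic closed set in $H$ (fixing one coordinate to lie in a closed subset of $B/I_{j-i}$) is visibly type-definable, which gives continuity and hence the homeomorphism.
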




	To state the analogous results for the group $\T_n(\RR)$, we need to define another non-decreasing sequence $I_{i,A}'(\RR)$, $i\in\N_{> 0}$, of $A$-type-definable subgroups of $\RRa$ as follows:
	\begin{itemize}
		\item $I_{1,A}'(\RR)$ is the smallest $A$-type-definable subgroup of $\RRa$ which contains $\RRaT_A$ and which is closed under multiplication by $\RR^*$ from both left and right.
		\item for $i > 0$ let $I_{i+1,A}'(\RR)$ be the smallest $A$-type-definable subgroup of $\RRa$ that contains the set $\RR \cdot I_{i,A}'(\RR) \cdot \RR^*$ and that is closed under multiplication by $\RR^*$ from both left and right.
	\end{itemize}
	By definition and induction, we have $ I_{i,A}(\RR) \subseteq I_{i,A}'(\RR) \subseteq  \RRiT_A$ for all $\RR, A, i$. 
Hence, if $I_{j,A}(\RR)$ is constant for $j \geq i$, then $I_{i,A}'(\RR) =  \RRiT_A$.
Also, as before, if $I_{j,A}'(\RR)$ is constant for $j \geq i$, then $I_{i,A}'(\RR) =  \RRiT_A$.

Again, when $\RR$ and $A$ are fixed, we write $I_i'$ to denote $I_{i,A}'(\RR)$. 
	
	\begin{proposition}
		\label{prop:tn00}
		\[\T_n(\RR)^{00}_A  =
		\begin{pmatrix}
		(\RR^*,\cdot)_A^{00} & I_1' & I_2' & \ldots & I_{n-2}' & I_{n-1}' \\
		0 & (\RR^*,\cdot)_A^{00} & I_1' & \ldots & I_{n-3}' & I_{n-2}' \\
		0 & 0 & (\RR^*,\cdot)_A^{00} & \ldots & I_{n-4}' & I_{n-3}' \\
		\vdots & \vdots & \vdots & \ddots & \vdots & \vdots \\
		0 & 0 & 0 & \ldots & (\RR^*,\cdot)_A^{00} & I_1' \\
		0 & 0 & 0 & \ldots & 0 & (\RR^*,\cdot)_A^{00} \\
		\end{pmatrix}.\]
	\end{proposition}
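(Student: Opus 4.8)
The plan is to mirror the proof of Proposition \ref{prop:utn00} (the $\UT_n$ case) using the structural decomposition of $\T_{n+1}(\RR)$ as $\left(\T_n(\RR), \RR^*\right)\ltimes_{\phi_1}^{\phi_2}(\RR^n,+)$ established in Subsection \ref{subsection: linear algebra}, and proceed by induction on $n$. The base case $n=1$ says $\T_1(\RR)^{00}_A = (\RR^*,\cdot)^{00}_A$, which is just the definition of the multiplicative connected component. For the inductive step, write $K := \T_n(\RR)$, let $\psi\colon \T_{n+1}(\RR)\to \T_n(\RR)\times \RR^*$ be the projection onto the ``outer'' factor (upper-left $n\times n$ block and bottom-right entry), and let $N := (\RR^n,+)$ be the kernel. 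The strategy is to use the general fact (cf.\ the $\UT_n$ argument) that for an extension $1\to N\to H\xrightarrow{\psi} Q\to 1$ with everything $A$-definable, $H^{00}_A$ is the preimage under $\psi$ of $Q^{00}_A$, intersected with the smallest $A$-type-definable bounded-index subgroup of $H$ whose image in $Q$ is trivial — more precisely, $H^{00}_A$ surjects onto $Q^{00}_A$ and $H^{00}_A\cap N$ is the smallest $A$-type-definable, bounded-index (in $H$, equivalently stabilized appropriately) subgroup of $N$ invariant under the relevant conjugation action.

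\medskip

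Concretely, I would first show $\psi\bigl(\T_{n+1}(\RR)^{00}_A\bigr) = \T_n(\RR)^{00}_A \times (\RR^*,\cdot)^{00}_A$: the inclusion $\subseteq$ is automatic, and for $\supseteq$ one observes that the preimage under $\psi$ of a bounded-index $A$-type-definable subgroup of $\T_n(\RR)\times\RR^*$ has bounded index in $\T_{n+1}(\RR)$, so $\psi^{-1}[\,\T_n(\RR)^{00}_A\times(\RR^*,\cdot)^{00}_A\,]$ contains $\T_{n+1}(\RR)^{00}_A$ and hence its image contains $\T_n(\RR)^{00}_A\times(\RR^*,\cdot)^{00}_A$ (using surjectivity of $\psi$ onto components, as in Remark \ref{remark: f[00]=00}). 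Then, by the inductive hypothesis, the upper-left $n\times n$ block of $\T_{n+1}(\RR)^{00}_A$ is exactly $\T_n(\RR)^{00}_A$, which accounts for the entries $I_1',\dots,I_{n-2}'$ appearing in rows $1,\dots,n$ and the diagonal $(\RR^*,\cdot)^{00}_A$ entries there, and the bottom-right corner is $(\RR^*,\cdot)^{00}_A$. It remains to identify $\T_{n+1}(\RR)^{00}_A\cap N$ — the group $V$ of vectors $v\in\RR^n$ occurring in the last column — which will be exactly $I_{n-1}'\times I_{n-2}'\times\cdots\times I_1'$ (reading top to bottom). Here I use: (a) $V$ is $A$-type-definable, bounded-index in $N$, and closed under the conjugation action of $\psi(\T_{n+1}(\RR)^{00}_A)\supseteq \T_n(\RR)^{00}_A\times(\RR^*,\cdot)^{00}_A$, which by the formula from Subsection \ref{subsection: linear algebra} means closed under $v\mapsto (Av)r^{-1}$; (b) conversely, the matrices with first $n-1$ rows/columns equal to the identity and last column in a suitable minimal such $V$, together with the already-identified block, form a bounded-index $A$-type-definable subgroup, forcing $\T_{n+1}(\RR)^{00}_A$ to be contained in (hence equal to) the displayed matrix set. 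The entry-by-entry match comes from the fact that closure of the $j$-th coordinate of the last column under left multiplication by all of $\RR$ (available because $\UT_n(\RR)\subseteq\T_n(\RR)^{00}_A$ contributes translations pushing $I_k$-type entries down and multiplying by arbitrary ring elements via the strictly-upper part, exactly as in Proposition \ref{prop:utn00}) and under multiplication by $\RR^*$ on the right is precisely the defining closure property of $I_k'$; combined with the recursion $I_{i+1}'\supseteq \RR\cdot I_i'\cdot\RR^*$, an induction on the column position pins down each coordinate as the corresponding $I_k'$.

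\medskip

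The main obstacle I anticipate is the careful bookkeeping showing that the off-diagonal entries in rows $1,\dots,n-1$ of the last column are forced to be the ``fresh'' $I_{n-1}',\dots,I_2'$ rather than something smaller, and that these are consistent with what the inductive hypothesis gives for the upper-left block. The key mechanism — already present in the $\UT_n$ proof — is that conjugating an elementary-type generator in position $(k,n+1)$ by a unitriangular matrix with a free parameter in position $(k',k)$ produces a generator in position $(k',n+1)$ multiplied on the left by that free ring element; iterating and also multiplying on the right by $\RR^*$ (using the bottom-right corner) generates exactly $I'_{\,n+1-k}$ in row $k$. So the real work is: (1) verifying this ``propagation'' computation in the $\T_n$ setting where both a left $\UT$-action and a right $\RR^*$-action are in play and checking the two actions commute (already noted in Subsection \ref{subsection: linear algebra}); and (2) checking that the minimal such configuration is simultaneously $A$-type-definable and of bounded index, so that it genuinely equals $\T_n(\RR)^{00}_A$ and not merely contains it. Both are routine given the analogous Proposition \ref{prop:utn00}, the semidirect-product-with-two-actions bookkeeping from Subsection \ref{subsection: linear algebra}, and the closure definitions of the $I_i'$; I would present the induction cleanly and relegate the matrix arithmetic to a short computation referencing the $\UT_n$ case.
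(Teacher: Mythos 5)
Your overall strategy matches the paper's: decompose $\T_{n+1}(\RR)\cong(\T_n(\RR),\RR^*)\ltimes_{\phi_1}^{\phi_2}(\RR^n,+)$, split off $(\RR^*,\cdot)^{00}_A$ via the two-sided semidirect product structure, identify the normal piece $V:=\T_{n+1}(\RR)^{00}_A\cap\bar N$ by a coordinate induction, and conclude by induction on $n$; the paper packages this as Lemma \ref{lem:twosemi} together with Lemma \ref{lem:utninv}(ii). However, there is a genuine error in the mechanism you give for the propagation step. You assert that closure of the last-column coordinates under left multiplication by all of $\RR$ is ``available because $\UT_n(\RR)\subseteq\T_n(\RR)^{00}_A$.'' That inclusion is false in general: for instance, with $R=\Z$ in the full structure, Lemma \ref{lem:twosemi} gives $\T_2(\ZZ)^{00}_\Z\cap\UT_2(\ZZ)=I_1'=\ZZaT_\Z\subsetneq\ZZ\cong\UT_2(\ZZ)$, so $\UT_2(\ZZ)\not\subseteq\T_2(\ZZ)^{00}_\Z$. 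As written, your argument only yields that $V$ is invariant under conjugation by $\T_n(\RR)^{00}_A\times(\RR^*,\cdot)^{00}_A$, whose unitriangular entries are constrained to the $I_i'$'s, so it does not suffice to generate $\RR\cdot I_i'\cdot\RR^*$ from $I_i'$.

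The correct source of invariance, as in Lemma \ref{lem:twosemi}, is normality of the component itself: $\T_{n+1}(\RR)^{00}_A$ is normal in $\T_{n+1}(\RR)$, so $V$ is invariant under conjugation by the entire outer factor $\T_n(\RR)\times\RR^*$, i.e.\ under $v\mapsto (Av)r^{-1}$ for \emph{all} $A\in\T_n(\RR)$ and $r\in\RR^*$. With this replacement, your propagation computation goes through exactly as in Lemma \ref{lem:utninv}(ii): choose a triangular matrix and a unit to place $rxr'$ in the target coordinate and subtract the unchanged part, and then separately check that $I_n'\times\dots\times I_1'$ is itself invariant under the full action of $\T_n(\RR)\times\RR^*$, which your sketch mentions but does not carry out. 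Once the invariance is sourced correctly, the remainder of your argument coincides with the paper's.
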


	The group operation in the result below uses the identifications analogous to those discussed before Proposition \ref{prop:utn00quot}:
	
	\begin{proposition}
		\label{prop:tn00quot}
		The definable Bohr compactification of the (discrete) group $\T_n(R)$ is
		\[\T_n(\RR)/\T_n(\RR)^{00}_R  \cong
		\begin{pmatrix}
		P & B/I_1' & B/I_2' & \ldots & B/I_{n-2}' & B/I_{n-1}' \\
		0 & P & B/I_1' & \ldots & B/I_{n-3}' & B/I_{n-2}' \\
		0 & 0 & P & \ldots & B/I_{n-4}' & B/I_{n-3}' \\
		\vdots & \vdots & \vdots & \ddots & \vdots & \vdots \\
		0 & 0 & 0 & \ldots & P & B/I_1' \\
		0 & 0 & 0 & \ldots & 0 & P \\
		\end{pmatrix},\]
		where $P := (\RR^*,\cdot)/(\RR^*,\cdot)_R^{00}$ is the definable Bohr compactification of $(R^*,\cdot)$, $B := \RRa$, and $\cong$ is a topological group isomorphism, with the right hand side equipped with the product topology induced from the logic topologies on the quotients $B/I_i'$.

More precisely, the definable Bohr compactification of $\T_n(R)$ is the homomomorphism from $\T_n(R)$ to the above group of matrices given coordinatewise as the quotients by $(\RR^*,\cdot)_R^{00}$ or by the appropriate $I_i'$'s.
	\end{proposition}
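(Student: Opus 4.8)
The plan is to read off Proposition \ref{prop:tn00quot} from Proposition \ref{prop:tn00}, in exactly the way Proposition \ref{prop:utn00quot} is obtained from Proposition \ref{prop:utn00}. First I would recall that the definable Bohr compactification of a definable group is the quotient by its $00$-component over the base structure (the group-theoretic version of Proposition \ref{prop: def bohr comp. of R}, i.e. the main result of \cite{GPP}, which in the full set-theoretic structure gives, via Fact \ref{cor:gpp}, the classical Bohr compactification of the discrete group). Applied to $\T_n(R)$ and $R$, this says the definable Bohr compactification is the quotient map $\T_n(R)\to\T_n(\RR)/\T_n(\RR)^{00}_R$ with the logic topology on the target; applied to $(R^*,\cdot)$ it identifies $P:=(\RR^*,\cdot)/(\RR^*,\cdot)_R^{00}$ as the definable Bohr compactification of $(R^*,\cdot)$. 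So the whole problem reduces to identifying the compact group $\T_n(\RR)/\T_n(\RR)^{00}_R$, together with its logic topology, with the displayed matrix group.

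Next I would substitute the explicit form of $\T_n(\RR)^{00}_R$ given by Proposition \ref{prop:tn00}: upper triangular matrices with diagonal entries in $(\RR^*,\cdot)_R^{00}$ and $(i,j)$-entry in $I_{j-i}'$ for $i<j$. Since this subgroup is ``block upper triangular'' in the sense that dividing out can be performed entry by entry, the quotient is, as a set, the matrices with diagonal entries in $P$ and $(i,j)$-entry in $B/I_{j-i}'$, where $B:=\RRa$. The content is then that the induced group operation is matrix multiplication read modulo the appropriate $I_k'$'s, i.e. the operation described via the coset identifications just before the statement. Using the structure $\T_{n+1}(\RR)\cong(\T_n(\RR),\RR^*)\ltimes_{\phi_1}^{\phi_2}(\RR^n,+)$ recorded in Subsection \ref{subsection: linear algebra}, or simply multiplying two upper triangular matrices entrywise, this comes down to two facts about the $I_i'$: (1) the coset identifications for the $I_i'$ --- that the product of a coset of $I_p'$ and a coset of $I_q'$ is a well-defined coset of $I_{p+q}'$, and that multiplying a coset of $I_i'$ by a representative of $\RR^*$ again gives a well-defined coset of $I_i'$ --- which are the $I_i'$-analogues of the identifications stated just before Proposition \ref{prop:utn00quot} and follow from the closure properties built into the definition of the $I_i'$ (and their left--right symmetry); and (2) that $(\RR^*,\cdot)_R^{00}$ acts trivially, from both left and right, on each quotient $B/I_i'$, so that multiplication by a representative of an element of $P$ is well-defined on $B/I_i'$. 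For (2) the argument I have in mind is that the two-sided action of $\RR^*$ on the bounded-size group $B/I_i'$ has image of bounded size in $\mathrm{End}(B/I_i')$ (using that $\kappa$ is a strong limit, so $|B/I_i'|^{|B/I_i'|}<\kappa$), whence its kernel $T_i=\{r\in\RR^*:\RR(r-1)\subseteq I_i'\text{ and }(r-1)\RR\subseteq I_i'\}$ is an $R$-type-definable subgroup of $(\RR^*,\cdot)$ of bounded index and therefore contains $(\RR^*,\cdot)_R^{00}$. Granting (1) and (2), the entrywise computation of the product of two cosets of $\T_n(\RR)^{00}_R$ produces exactly the stated operation, with the diagonal blocks multiplying in $P$.

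Finally I would match topologies. The left-hand side is compact Hausdorff by the general properties of the logic topology recalled in Section \ref{section: preliminaries} (applied to the bounded-index type-definable subgroup $\T_n(\RR)^{00}_R$), and the right-hand side is compact Hausdorff as a finite product of the compact groups $P$ and $B/I_i'$ (each a quotient of $\RR^*$, resp. $\RRa$, by a bounded-index type-definable subgroup). The natural coordinatewise map from the left to the right is a continuous bijective homomorphism --- continuity because each coordinate projection is continuous for the logic topologies, the relevant equivalence relations (``equal modulo $I_i'$'', resp. ``in the same $(\RR^*,\cdot)_R^{00}$-coset'') being coarser and type-definable over a small set --- hence, being a continuous bijection between compact Hausdorff spaces, a topological isomorphism. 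Unwinding the definition of this map yields the last (``more precise'') sentence of the statement. I expect the main obstacle to be point (2) above, the triviality of the $(\RR^*,\cdot)_R^{00}$-action on the $B/I_i'$, which is the only genuinely new ingredient compared with the $\UT_n$ case; everything else is a routine adaptation of the proofs of Propositions \ref{prop:utn00} and \ref{prop:utn00quot} and of \cite{GPP}.
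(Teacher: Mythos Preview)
Your approach is essentially the paper's own: the paper's entire proof of Proposition~\ref{prop:tn00quot} is the single line ``arguing as in the proof of Proposition~\ref{prop:utn00quot}'', i.e.\ define the entrywise quotient map, read off its kernel from Proposition~\ref{prop:tn00}, and pass to the topology by the continuous-bijection-between-compacta argument --- exactly your steps. The one genuine addition you make over the $\UT_n$ case, your point~(2) that $(\RR^*,\cdot)_R^{00}$ acts trivially on each $B/I_i'$, is precisely what the paper sweeps under ``the identifications analogous to those discussed before Proposition~\ref{prop:utn00quot}''; your bounded-index argument (the action of $\RR^*$ on the bounded quotient $B/I_i'$ factors through a small group, and the $R$-type-definable kernel must then contain $(\RR^*,\cdot)_R^{00}$) is correct and is a legitimate way to fill this in.
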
	
	We will prove Propositions \ref{prop:utn00}-\ref{prop:tn00quot} later in this subsection.

From now on, when we compute Bohr compactications, we will be describing them only as compact groups, skipping the information about the actual homomorphisms from the groups in question to these compact groups, since these homomorphisms are always as in the last parts of Propositions \ref{prop:utn00quot} and \ref{prop:tn00quot}.


	The descriptions of the definable Bohr compactifications of $\UT_n(R)$ and $\T_n(R)$ given by Propositions \ref{prop:utn00quot} and \ref{prop:tn00quot} can be significantly improved under the following condition on the ring $R$:
	\[I_{i,R}(\RR) = \RRiT_R \text{ for all } i \geq 2.\eqno{(\dagger)}\]
	The condition asserts exactly that the sequence $I_{i,R}(\RR)$ stabilizes after (at most) two steps. Assuming $(\dagger)$, each quotient $\RRa/I_{i,R}(\RR)$ and  $\RRa/I_{i,R}'(\RR)$ for $i \geq 2$ is the ring definable Bohr compactification $\dBohr{R}$ of $R$. Hence, by Propositions \ref{prop:utn00quot} and \ref{prop:tn00quot}, we get:
	
	\begin{corollary}
		\label{cor:utn00quot with dagger}
		Assume $R$ satisfies $(\dagger)$. Then the definable Bohr compactification of the group $\UT_n(R)$ is
		\[\begin{pmatrix}
		1 & \dBohr{(R, +)} & \dBohr{R} & \ldots & \dBohr{R} & \dBohr{R} \\
		0 & 1 & \dBohr{(R, +)} & \ldots & \dBohr{R} & \dBohr{R} \\
		0 & 0 & 1 & \ldots & \dBohr{R} & \dBohr{R} \\
		\vdots & \vdots & \vdots & \ddots & \vdots & \vdots \\
		0 & 0 & 0 & \ldots & 1 & \dBohr{(R, +)} \\
		0 & 0 & 0 & \ldots & 0 & 1 \\
		\end{pmatrix}\]
		considered with the product topology. Also, the definable Bohr compactification of the group $\T_n(R)$ is
\[\begin{pmatrix}
		\dBohr{(R^*,\cdot)} & (\RR,+)/I_1' & \dBohr{R} & \ldots & \dBohr{R} & \dBohr{R} \\
		0 & \dBohr{(R^*,\cdot)} & (\RR,+)/I_1' & \ldots & \dBohr{R} & \dBohr{R} \\
		0 & 0 & \dBohr{(R^*,\cdot)} & \ldots & \dBohr{R} & \dBohr{R} \\
		\vdots & \vdots & \vdots & \ddots & \vdots & \vdots \\
		0 & 0 & 0 & \ldots & \dBohr{(R^*,\cdot)} & (\RR,+)/I_1' \\
		0 & 0 & 0 & \ldots & 0 & \dBohr{(R^*,\cdot)} \\
		\end{pmatrix}\]
considered with the product topology.
	\end{corollary}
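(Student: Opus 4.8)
The plan is to derive Corollary~\ref{cor:utn00quot with dagger} directly from Propositions~\ref{prop:utn00quot} and~\ref{prop:tn00quot} by computing what each entry of those matrix descriptions becomes once condition~$(\dagger)$ is imposed. Recall that in those propositions the Bohr compactification of $\UT_n(R)$ is $\begin{pmatrix}\ldots\end{pmatrix}$ with $(i,j)$-entry $B/I_{j-i,R}(\RR)$ for $i<j$ (and $1$ on the diagonal), where $B=\RRa$; similarly for $\T_n(R)$ with $I'_{j-i,R}(\RR)$ off the diagonal and $P=(\RR^*,\cdot)/(\RR^*,\cdot)^{00}_R$ on the diagonal. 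So the whole content of the corollary is the identification of the quotients $B/I_{k,R}(\RR)$ and $B/I'_{k,R}(\RR)$ for the relevant values of $k$.

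First I would record that $B/I_{1,R}(\RR) = \RRa/\RRaT_R$ is by definition the definable Bohr compactification $\dBohr{(R,+)}$ of the additive group (this is already noted after Proposition~\ref{prop:utn00quot}), so every $(i,i+1)$-entry is $\dBohr{(R,+)}$. Next, assuming $(\dagger)$, i.e. $I_{k,R}(\RR)=\RRiT_R$ for all $k\geq 2$, every entry $B/I_{k,R}(\RR)$ with $k\geq 2$ equals $\RRa/\RRiT_R$, which by Proposition~\ref{prop: def bohr comp. of R} is precisely the ring definable Bohr compactification $\dBohr{R}$ (as a topological ring, but we only use its underlying group here). For the $\T_n$ part I would invoke the inclusions $I_{k,A}(\RR)\subseteq I'_{k,A}(\RR)\subseteq \RRiT_A$ established right before Proposition~\ref{prop:tn00}: under $(\dagger)$ these force $I'_{k,R}(\RR)=\RRiT_R$ for all $k\geq 2$ as well (this is stated explicitly in the text: "if $I_{j,A}(\RR)$ is constant for $j\geq i$, then $I'_{i,A}(\RR)=\RRiT_A$"), so the off-diagonal entries with $k\geq 2$ are again $\dBohr{R}$, while the $(i,i+1)$-entries remain $\RRa/I'_{1,R}(\RR)$ — note $I'_{1}$ need not equal $\RRaT_R$, which is why this entry is written as $(\RR,+)/I_1'$ rather than $\dBohr{(R,+)}$. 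The diagonal entries of the $\T_n$ description are $P=(\RR^*,\cdot)/(\RR^*,\cdot)^{00}_R$, which equals $\dBohr{(R^*,\cdot)}$ by the definition of the definable Bohr compactification of a group (Fact~\ref{cor:gpp} and the surrounding discussion, applied to the multiplicative group $R^*$). Finally I would note that the group operation and the topology on the resulting matrix sets are inherited verbatim from Propositions~\ref{prop:utn00quot} and~\ref{prop:tn00quot}: the coset-multiplication identifications described before Proposition~\ref{prop:utn00quot} still make sense (they only require $I_{i_s},I_{j_s}\subseteq I_k$, which holds), and the product topology is the same.

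I do not expect a genuine obstacle here — the corollary is a direct specialization, and the only thing to be careful about is not overclaiming on the $(1,2)$-type entries. The one point deserving an explicit sentence is why $I'_{1,R}(\RR)$ cannot in general be simplified to $\RRaT_R$ (it is the smallest $R$-type-definable subgroup containing $\RRaT_R$ and closed under two-sided multiplication by $\RR^*$, which can be strictly larger), hence the asymmetry in the statement between the superdiagonal entries and the rest. A secondary, purely expository point is to remind the reader that $\dBohr{R}$ here denotes the ring definable Bohr compactification but is being used only as an abelian group in the matrix entries, so the displayed matrix group is to be read with entrywise addition and the twisted multiplication coming from Proposition~\ref{prop:utn00quot}. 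With these clarifications the proof is a two-line deduction, so I would keep it correspondingly short.
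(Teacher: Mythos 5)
Your proposal is correct and follows the same route as the paper: it is a direct specialization of Propositions~\ref{prop:utn00quot} and~\ref{prop:tn00quot}, using $(\dagger)$ together with Proposition~\ref{prop: def bohr comp. of R} to identify $B/I_k$ and $B/I_k'$ for $k\geq 2$ with $\dBohr{R}$, and the inclusions $I_{k,A}\subseteq I'_{k,A}\subseteq\RRiT_A$ to pass from the $I_k$ to the $I_k'$. Your remark on why the $(i,i+1)$-entry in the $\T_n$ case must be left as $(\RR,+)/I_1'$ rather than $\dBohr{(R,+)}$ is exactly the right point to be careful about and matches what the paper implicitly does.
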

	
	In the next subsection, we will consider several classes of rings, each time showing that they satisfy $(\dagger)$. This motivates the following:
	\begin{question}\label{quest: finitely many steps in the sequence I_i}
		Does $(\dagger)$ hold for every ring $R$?
	\end{question}

Condition $(\dagger)$ is strongly related to Question \ref{quest: 000=00}, which is explained in the next two lemmas.

\begin{lemma}\label{lem: occurrence of 000}
The subgroup $J$ of $(\bar R, +)$ generated by $\RR \cdot \RRaT_R$ is precisely $\RRiI_R$.
\end{lemma}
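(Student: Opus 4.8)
The plan is to show two inclusions: $J \subseteq \RRiI_R$ and $\RRiI_R \subseteq J$.

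For $J \subseteq \RRiI_R$: note that $\RRiI_R$ is an $R$-invariant ideal of $\bar R$ of bounded index, so in particular it is a bounded index, $R$-invariant subgroup of $(\bar R,+)$. Since $\RRaI_R$ is the smallest such subgroup (and exists by the remark after Definition \ref{def:con}, being coarser than the Lascar strong type relation), we have $\RRaI_R \subseteq \RRiI_R$; and since $\RRiI_R$ is a left ideal, $\RR \cdot \RRaI_R \subseteq \RRiI_R$, hence also $\RR \cdot \RRaT_R \subseteq \RR \cdot \RRaI_R \subseteq \RRiI_R$ (using $\RRaI_R \subseteq \RRaT_R$ from Remark \ref{rem: basic inclusions between  components}, so actually $\RR \cdot \RRaT_R \subseteq \RRiI_R$ requires instead that $\RRaT_R \subseteq \RRiI_R$, which does hold since $\RRaT_R$ might be larger — let me reconsider). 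The correct observation is that $\RRaT_R \subseteq \RRiT_R$ by Remark \ref{rem: basic inclusions between  components}, but we need $\RRaT_R \subseteq \RRiI_R$, which is false in general. So instead I would argue: $\RRaT_R$ is the smallest bounded-index $R$-type-definable subgroup of $(\bar R,+)$, and $\RRiI_R \cap (\bar R,+) = \RRiI_R$ is a bounded-index $R$-invariant — but not necessarily type-definable — subgroup, so this doesn't immediately give the inclusion either. The clean route: $\RRiI_R$ is a bounded-index ideal, so $(\bar R,+)/\RRiI_R$ is a bounded group; since $\RRaT_R \supseteq \RRaI_R$ and we want $\RRaT_R \subseteq \RRiI_R$... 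The honest approach is that $\RRiI_R \supseteq \RRaI_R$, and one must show $\RR\cdot\RRaT_R \subseteq \RRiI_R$ directly. Actually $\RRaT_R$ need not lie inside $\RRiI_R$; what lies inside is $\RRaI_R$. So I think the intended argument for $J \subseteq \RRiI_R$ actually uses that $J$ — being generated by $\RR \cdot \RRaT_R$, a union of boundedly many $R$-invariant sets (translates/images $r \cdot \RRaT_R$), is itself a bounded-index $R$-invariant subgroup of $(\bar R,+)$; and since $J \supseteq \RR \cdot \RRaT_R \supseteq 1 \cdot \RRaT_R = \RRaT_R$, and also $\RR \cdot J = \RR \cdot (\RR \cdot \RRaT_R \text{ generated}) \subseteq J$ because $\RR \cdot \RRaT_R$ generates $J$ and $\RR \cdot (\RR \cdot \RRaT_R) \subseteq \RR \cdot \RRaT_R$ as multiplication — wait, $\RR\cdot\RR \subseteq \RR$, so $\RR \cdot (\RR \cdot \RRaT_R) = (\RR\cdot\RR)\cdot\RRaT_R \subseteq \RR\cdot\RRaT_R \subseteq J$, hence $\RR \cdot J \subseteq J$ since $J$ is the subgroup generated by $\RR\cdot\RRaT_R$ and left multiplication by a fixed $r$ is an additive endomorphism. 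So $J$ is a left ideal of $\bar R$, of bounded index, $R$-invariant. By Corollary \ref{cor:id}(ii) (or Proposition \ref{prop:groupIdeal}), $\RRiI_R$ is the smallest such, hence $\RRiI_R \subseteq J$.

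For the reverse inclusion $J \subseteq \RRiI_R$: we need that $\RRiI_R$ contains $\RR\cdot\RRaT_R$. Since $\RRiI_R$ is a bounded-index left ideal, it suffices to show $\RRaT_R \subseteq \RRiI_R$ — but as noted this is not obvious since $\RRaT_R$ is only the smallest type-definable such subgroup while $\RRiI_R$ need not be type-definable. However, $\RRaI_R \subseteq \RRiI_R$, and more relevantly: $\RRiI_R$ being $R$-invariant of bounded index implies (by the argument as in \cite{krupil} or \cite{modcon}, using that the Lascar strong type relation refines it) that $\RRiI_R \supseteq$ the $R$-invariant connected component; but we want to compare with $\RRaT_R$. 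Here is the key point: $\RRiI_R$, being an $R$-invariant subgroup of $(\bar R, +)$ of bounded index, satisfies $(\bar R,+)_R^{000} = \RRaI_R \subseteq \RRiI_R$; and since $(\bar R,+)$ is abelian, by \cite[Theorem 0.5]{krupil} we have $\RRaI_R = \RRaT_R$, so indeed $\RRaT_R \subseteq \RRiI_R$. Then $\RR \cdot \RRaT_R \subseteq \RRiI_R$ because $\RRiI_R$ is a left ideal, and therefore $J \subseteq \RRiI_R$ since $\RRiI_R$ is a subgroup of $(\bar R,+)$.

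Combining, $J = \RRiI_R$. The main obstacle — and the step worth stating carefully — is the reverse inclusion, specifically justifying $\RRaT_R \subseteq \RRiI_R$, which is exactly where the abelianness of $(\bar R,+)$ together with \cite[Theorem 0.5]{krupil} ($G^{000}_\emptyset = G^{00}_\emptyset$ for $G$ abelian, relativized to parameters $R$) gets used; without it one could only conclude $\RRaI_R \subseteq \RRiI_R$, which would suffice only to get $\RR\cdot\RRaI_R \subseteq \RRiI_R$, a priori a smaller set. Everything else (that $J$ is a bounded-index $R$-invariant left ideal, and the minimality characterization of $\RRiI_R$ from Corollary \ref{cor:id}) is routine.
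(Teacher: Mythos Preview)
Your proof is correct and follows essentially the same approach as the paper: for $J \subseteq \RRiI_R$, both arguments use that $(\bar R,+)$ is abelian to get $\RRaT_R = \RRaI_R \subseteq \RRiI_R$ via \cite[Theorem 0.5]{krupil}, whence $\RR \cdot \RRaT_R \subseteq \RRiI_R$ since $\RRiI_R$ is a left ideal; and for $\RRiI_R \subseteq J$, both observe that $J$ is an $R$-invariant left ideal of bounded index (containing $\RRaT_R$) and invoke the minimality statement of Corollary~\ref{cor:id}(ii). The exposition would benefit greatly from stripping out the false starts and presenting only the final clean argument.
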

\begin{proof}
As $\RRa$ is abelian, $\RRaT_R = \RRaI_R \subseteq \RRiI_R$ by \cite[Theorem 0.5]{krupil}. Hence, $J$ is contained in $\RRiI_R$. On the other hand, $J$ is an $R$-invariant left ideal which contains $\RRaT_R$ and so has bounded index, hence it must contain $\RRiI_R$ by Corollary \ref{cor:id}.
\end{proof}
	
\begin{lemma}\label{lem: conditions equivalent to 000=00}
Let $J$ be the subgroup of  $(\bar R, +)$ generated by $\RR \cdot \RRaT_R$. The following conditions are equivalent.
		\begin{enumerate}[(i)]
			\item $J$ is type-definable.
\item $J$ is generated by $\RR \cdot \RRaT_R$ in finitely many steps.
\item  $\RRiI_R = \RRiT_R$.
		\end{enumerate}
	If the above equivalent conditions hold, then $(\dagger)$ holds for $R$.
	\end{lemma}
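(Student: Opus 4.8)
The plan is to run everything through Lemma~\ref{lem: occurrence of 000}, which gives $J=\RRiI_R$, together with the approximating sets $J_n:=\left\{\textstyle\sum_{k=1}^{n}r_k s_k : r_k\in\RR,\ s_k\in\RRaT_R\right\}$ for $n\in\N_{>0}$. Since $0\in\RRaT_R$ and $1\in\RR$, one has $\RRaT_R\subseteq J_1\subseteq J_2\subseteq\cdots$ and $J=\bigcup_n J_n$ (the set $\RR\cdot\RRaT_R$ is symmetric, so the generated subgroup is the union of its finite sums); condition (ii) is to be read as ``$J=J_N$ for some $N$''. The equivalence (i)$\Leftrightarrow$(iii) is the easy part: (iii)$\Rightarrow$(i) is trivial because $\RRiT_R$ is type-definable, and for (i)$\Rightarrow$(iii) one uses that $J=\RRiI_R$ is $R$-invariant and, by Corollary~\ref{cor:id}, a two-sided ideal of bounded index; if it happens to be type-definable then, being $R$-invariant, it is $R$-type-definable (a standard fact, using strong homogeneity of $\RR$), hence contains $\RRiT_R$ by the minimality in Definition~\ref{def:con}, and since $\RRiI_R\subseteq\RRiT_R$ always we get equality.

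Next I would show that each $J_n$ is $R$-type-definable. Indeed $J_n$ is the image of the $R$-type-definable set $\RR^{n}\times(\RRaT_R)^{n}$ under the $\emptyset$-definable map $(\bar r,\bar s)\mapsto\sum_k r_k s_k$, and a routine compactness argument shows the image of an $R$-type-definable set under an $R$-definable map is again $R$-type-definable: for $c\notin J_n$ the $c$-definable fibre over $c$ is disjoint from $\RR^{n}\times(\RRaT_R)^{n}$, hence from some $R$-definable superset $\theta(\RR)$ of the latter, and then the $R$-formula $\exists\bar r\,\exists\bar s\,(\theta(\bar r,\bar s)\wedge x=\sum_k r_k s_k)$ holds on $J_n$ but not at $c$. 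In particular (ii)$\Rightarrow$(i) is immediate. For (i)$\Rightarrow$(ii), assume $J$ is type-definable. Then $J$ is a bounded-index type-definable subgroup of $(\RRa,+)$ with $\RRaT_R\le J$, so $J/\RRaT_R$ is a closed subgroup of the compact Hausdorff group $\RR/\RRaT_R$, hence itself compact Hausdorff in the logic topology. The same compactness argument gives that $J_n+\RRaT_R$ is $R$-type-definable, so the image of $J_n$ in $J/\RRaT_R$ is closed; these images increase and exhaust $J/\RRaT_R$. By the Baire category theorem some image of $J_N$ in $J/\RRaT_R$ has nonempty interior, and then the standard argument (using $-J_N=J_N$ and compactness of $J/\RRaT_R$) yields $M$ with $J_M+\RRaT_R=J$; since $\RRaT_R=1\cdot\RRaT_R$, this gives $J=J_M+\RRaT_R\subseteq J_{M+1}\subseteq J$, i.e.\ $J=J_{M+1}$, which is (ii).

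Finally, assuming the equivalent conditions --- say (iii) --- I would check $(\dagger)$ directly. By Lemma~\ref{lem: occurrence of 000} and (iii), $J=\RRiI_R=\RRiT_R$, hence $\RR\cdot I_{1,R}(\RR)=\RR\cdot\RRaT_R\subseteq\RRiT_R$; since $\RRiT_R$ is an $R$-type-definable subgroup containing $\RR\cdot I_{1,R}(\RR)$ we get $I_{2,R}(\RR)\subseteq\RRiT_R$, while $I_{2,R}(\RR)$, being a subgroup containing $\RR\cdot\RRaT_R$, contains $\langle\RR\cdot\RRaT_R\rangle=J=\RRiT_R$; so $I_{2,R}(\RR)=\RRiT_R$. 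As $\RRiT_R$ is a two-sided ideal, $\RR\cdot I_{2,R}(\RR)=\RR\cdot\RRiT_R\subseteq\RRiT_R$, and induction then gives $I_{i,R}(\RR)=\RRiT_R$ for all $i\ge 2$, i.e.\ $(\dagger)$.

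The main obstacle is the implication (i)$\Rightarrow$(ii). A naive compactness extraction of a single $N$ with $J=J_N$ fails because the $J_n$ are only type-definable, not definable, so one really has to descend to the compact quotient $J/\RRaT_R$, verify there that the images of the $J_n$ are closed (which needs $J_n+\RRaT_R$ type-definable, hence a second application of the image-of-a-type-definable-set argument), and then invoke Baire category; the type-definability of the individual $J_n$ is the other point that needs care.
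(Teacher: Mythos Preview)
Your proof is correct. The equivalence (i)$\Leftrightarrow$(iii) and the implication (ii)$\Rightarrow$(i) match the paper's argument exactly (via Lemma~\ref{lem: occurrence of 000} and the routine type-definability of $J_n$), and your verification of $(\dagger)$ fills in a step the paper leaves implicit.

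The one genuine difference is (i)$\Rightarrow$(ii). The paper simply invokes Theorem~3.1 of \cite{New} (Newelski's result that a type-definable group generated by a symmetric type-definable set is generated in finitely many steps). You instead reprove the needed instance by passing to the compact Hausdorff quotient $J/\RRaT_R$, observing that the images of the $J_n$ are closed, and applying the Baire category theorem there. This is in the same spirit as Newelski's proof (which also relies on Baire category, but on the Stone space of types), so the arguments are cousins rather than unrelated; your route is more self-contained and avoids the black-box citation, at the cost of a bit more bookkeeping (checking that $J_n+\RRaT_R$ is type-definable so that its image is closed). Either way the content is the same compactness/Baire mechanism.
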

	\begin{proof}
The implication (i) $\rightarrow$ (ii) follows from Theorem 3.1 of \cite{New}; and (ii) $\rightarrow$ (i) is trivial. The equivalence (i) $\leftrightarrow$ (iii) follows from Lemma \ref{lem: occurrence of 000}.
	\end{proof}

	A positive answer to Question \ref{quest: 000=00} is the assertion that condition (iii) of Lemma \ref{lem: conditions equivalent to 000=00} holds, yielding $(\dagger)$ and the descriptions of the definable Bohr compactifications of $\UT_n(R)$ and $\T_n(R)$ given by Corollary \ref{cor:utn00quot with dagger}.
%
So Question \ref{quest: 000=00} can be restated in the following enriched form.

\begin{question}\label{question: to be answered in the next paper}
Do the equivalent conditions from Lemma \ref{lem: conditions equivalent to 000=00} hold for every unital ring? If yes, is there a bound on the number of steps which are needed to generate a group by $\RR \cdot \RRaT_R$ which works for all rings $R$?
\end{question}

We expect a positive answer to this question (so also to Question \ref{quest: finitely many steps in the sequence I_i}). This will be dealt with in a forthcoming paper of the third author and Tomasz Rzepecki. In the next subsection, we will give a positive answer in several concrete examples.

Let us only argue here that in order to answer Questions \ref{quest: 000=00} and \ref{quest: finitely many steps in the sequence I_i} for commutative, unital rings $R$ in the full language $\Lsetp{R}$, we can restrict the context to polynomial rings over $\Z$ in possibly infinitely many variables. This essentially follows from the fact that for each commutative, unital ring there is a ring of polynomials $\Z[\bar X]$ (where $\bar X$ is a tuple of possibly infinitely many variables) and an epimorphism $f \colon \Z[\bar X] \to R$. Indeed, let us work in the two-sorted structure $M$ with sorts $R$ and $\Z[\bar X]$ in the language $\Lsetp{M}$. Then all the relevant ``components'' associated with $R$ computed in $\Lsetp{R}$ coincide with the ones computed in $\Lsetp{M}$, and similarly for the ring $\Z[\bar X]$; hence, we can work in $\Lsetp{M}$. Put $P:=\Z[\bar X]$, and let $\bar P$ be the interpretation of $P$ in the monster model $\bar M$. Finally, since $f$ is a 0-definable ring epimorphism, by Remark \ref{remark: f[00]=00}, we have $f[\bar P^{000}_\emptyset] = \RRiI_\emptyset$,  $f[\bar P^{00}_\emptyset] = \RRiT_\emptyset$, and we easily check that $f[I_{i,\emptyset}(\bar P)]= I_{i,\emptyset}(\bar R)$ for all $i$. 

The same holds for non-commutative rings, using free rings in non-commuting variables in place of polynomial rings.\\




We now show a number of lemmas needed in the proofs of Propositions \ref{prop:utn00}-\ref{prop:tn00quot}.
We will be using notations and observations from Subsection \ref{subsection: linear algebra}.


	\begin{lemma}
		\label{lem:twosemi}
		Let $K$, $H$, and $N$ be $0$-definable groups and $G: = (K, H) \ltimes_{\phi_1}^{\phi_2} N$ with $0$-definable actions $\phi_1, \phi_2$. Then $\G^{00}_A = (\bar{K}^{00}_A \times \bar{H}^{00}_A) \ltimes_{\phi_1}^{\phi_2} N'$, where $N'$ is the smallest $A$-type-definable, bounded index subgroup of $\bar{N}$ invariant under the actions of both $\bar{K}$ and $\bar{H}$ on $\bar{N}$.
	\end{lemma}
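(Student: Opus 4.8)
The plan is to establish the two inclusions for $\G^{00}_A$ directly, using the semidirect-product structure and the fact that $N$ is normal in $G$. First I would verify that the set $(\bar{K}^{00}_A \times \bar{H}^{00}_A) \ltimes_{\phi_1}^{\phi_2} N'$ is indeed a subgroup of $\G$: it is closed under the group operation precisely because $N'$ is invariant under the actions of $\bar K$ and $\bar H$ (so conjugates of elements of $N'$ by elements of $\bar K \times \bar H$ stay in $N'$), and $\bar K^{00}_A$, $\bar H^{00}_A$ are subgroups. It is $A$-type-definable (as a product of $A$-type-definable pieces inside the $0$-definable product decomposition) and of bounded index, since $[\bar K : \bar K^{00}_A]$, $[\bar H : \bar H^{00}_A]$, and $[\bar N : N']$ are all bounded and the index of the whole subgroup is at most their product. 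Hence $\G^{00}_A \leq (\bar K^{00}_A \times \bar H^{00}_A) \ltimes_{\phi_1}^{\phi_2} N'$.

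For the reverse inclusion, I would use the canonical projections and the normal inclusion $N \trianglelefteq G$. The projection $p \colon G \to K \times H$ (with kernel $N$) is a $0$-definable epimorphism, so $p[\G^{00}_A] = (\overline{K \times H})^{00}_A = \bar K^{00}_A \times \bar H^{00}_A$ (using that the component of a product is the product of components, and Remark \ref{remark: f[00]=00}-style preservation under definable epimorphisms). Thus $\G^{00}_A$ surjects onto $\bar K^{00}_A \times \bar H^{00}_A$. It remains to show $\G^{00}_A \cap \bar N \supseteq N'$. Now $\G^{00}_A \cap \bar N$ is an $A$-type-definable subgroup of $\bar N$; it has bounded index in $\bar N$ because $[\bar N : \G^{00}_A \cap \bar N] = [\bar N \cdot \G^{00}_A : \G^{00}_A] \leq [\G : \G^{00}_A]$ is bounded; and it is invariant under the conjugation actions of $\bar K$ and $\bar H$ on $\bar N$ because $\G^{00}_A \trianglelefteq \G$ and $N$ is normal, so conjugating $\G^{00}_A \cap \bar N$ by an element of $\bar K \times \bar H \leq \G$ lands back inside $\G^{00}_A \cap \bar N$. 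By minimality of $N'$ among such subgroups, $N' \leq \G^{00}_A \cap \bar N$. Combining the surjection onto $\bar K^{00}_A \times \bar H^{00}_A$ with $N' \leq \G^{00}_A$ and the fact that $\G^{00}_A$ is a subgroup containing representatives of all these pieces, a short computation with the product decomposition gives $(\bar K^{00}_A \times \bar H^{00}_A) \ltimes_{\phi_1}^{\phi_2} N' \leq \G^{00}_A$.

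\textbf{Main obstacle.} The step I expect to require the most care is checking that the candidate subgroup is genuinely closed under multiplication and inversion — i.e. that invariance of $N'$ under \emph{both} the left $\bar K$-action and the right $\bar H$-action is exactly the right hypothesis — and, symmetrically, that $\G^{00}_A \cap \bar N$ really is invariant under both actions (here one must be slightly careful that conjugation by $\bar K \times \{1\}$ realizes $\phi_1$ and by $\{1\} \times \bar H$ realizes $\phi_2$, as recorded by the conjugation formula $(k,h,1)(1,1,n)(k,h,1)^{-1} = (1,1,knh^{-1})$ in Subsection \ref{subsection: linear algebra}). One should also confirm that $\bar K^{00}_A$ (rather than merely something of bounded index) is forced: this follows because $p[\G^{00}_A]$ must be a bounded-index $A$-type-definable subgroup of $\bar K \times \bar H$ and hence contains $(\bar K \times \bar H)^{00}_A = \bar K^{00}_A \times \bar H^{00}_A$, while the reverse containment is automatic from $\G^{00}_A$ mapping into a bounded-index subgroup. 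The rest is bookkeeping with the explicit product formula $(k,h,n)\cdot(k',h',n') = (kk', hh', (kn')(nh'))$.
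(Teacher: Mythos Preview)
Your proposal is correct and follows essentially the same strategy as the paper's proof: verify that the candidate $(\bar K^{00}_A \times \bar H^{00}_A)\ltimes_{\phi_1}^{\phi_2} N'$ is an $A$-type-definable, bounded index subgroup (giving $\G^{00}_A \subseteq$ candidate), and use minimality of $N'$ together with the $\bar K \times \bar H$-part to get the reverse inclusion. The only real difference is cosmetic: for the $\bar K \times \bar H$-piece you go through the projection $p\colon \G \to \bar K \times \bar H$ and invoke preservation of ${}^{00}$ under definable epimorphisms, whereas the paper simply intersects $\G^{00}_A$ with the embedded copy $\bar K \times \bar H \leq \G$ and notes that this intersection is $A$-type-definable of bounded index, hence contains $\bar K^{00}_A \times \bar H^{00}_A$. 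The paper's route avoids the small extra computation you flag at the end (going from ``$p[\G^{00}_A]$ surjects'' to ``$\bar K^{00}_A \times \bar H^{00}_A \subseteq \G^{00}_A$''), but both arguments are short and equivalent.
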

	\begin{proof}
		First observe that a subgroup $N_0 \leq \bar{N}$ is invariant under the actions of $\bar{K}$ and $\bar{H}$ if and only if it is invariant under conjugation by elements of $\bar{K} \times \bar{H}$. 
		The group $\G^{00}_A \cap \bar{N}$ is a bounded index, $A$-type-definable subgroup of $ \bar{N}$ invariant under the action of $\bar{K} \times \bar{H}$ by conjugation, so it contains $N'$. The group $\G^{00}_A \cap (\bar{K} \times \bar{H})$ is an $A$-type-definable subgroup of $\bar{K} \times \bar{H}$ of bounded index, so it contains $(\bar{K} \times \bar{H})^{00}_A = \bar{K}^{00}_A \times \bar{H}^{00}_A$. Thus, $(\bar{K}^{00}_A \times \bar{H}^{00}_A) \ltimes_{\phi_1}^{\phi_2} N' \subseteq \G^{00}_A$. Since $N'$ is invariant under both group actions, $(\bar{K}^{00}_A \times \bar{H}^{00}_A) \ltimes_{\phi_1}^{\phi_2} N'$ is a group. It is $A$-type-definable and with bounded index, so we get $\G^{00}_A = (\bar{K}^{00}_A \times \bar{H}^{00}_A) \ltimes_{\phi_1}^{\phi_2} N'$.
	\end{proof}
	
	\begin{corollary}
		\label{cor:semi}
		Let $H$ and $N$ be $0$-definable groups and $G := H \ltimes_\phi N$ with a $0$-definable action $\phi$. Then $\G^{00}_A = \bar{H}^{00}_A \ltimes_{\phi} N'$, where $N'$ is the smallest $A$-type-definable, bounded index subgroup of $\bar{N}$ invariant under the action of $\bar{H}$ on $\bar{N}$.
	\end{corollary}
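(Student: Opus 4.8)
The plan is to deduce this at once from Lemma~\ref{lem:twosemi} by specializing the two-action construction to the case where the second group, and hence the second action, is trivial. Concretely, I would apply Lemma~\ref{lem:twosemi} with $K := H$, with the role of ``$H$'' there played by the trivial group $\{1\}$, with $\phi_1 := \phi$, and with $\phi_2$ the (trivial) action of $\{1\}$ on $N$. All of these data are $0$-definable, the two actions commute trivially, and the resulting group $(K,\{1\}) \ltimes_{\phi_1}^{\phi_2} N$ is literally $H \ltimes_\phi N = G$, since the $\{1\}$-factor and the trivial right action contribute nothing to the group law.

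Lemma~\ref{lem:twosemi} then yields $\G^{00}_A = (\bar{H}^{00}_A \times \bar{\{1\}}^{00}_A) \ltimes_{\phi_1}^{\phi_2} N'$, where $N'$ is the smallest $A$-type-definable, bounded index subgroup of $\bar{N}$ invariant under the actions of both $\bar{H}$ and $\bar{\{1\}}$. But $\bar{\{1\}}^{00}_A = \{1\}$ and the $\bar{\{1\}}$-action is the identity, so this simplifies to $\G^{00}_A = \bar{H}^{00}_A \ltimes_\phi N'$ with $N'$ the smallest $A$-type-definable, bounded index subgroup of $\bar{N}$ invariant under the $\bar{H}$-action — exactly the assertion. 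Alternatively, one may just replay the proof of Lemma~\ref{lem:twosemi} with the $\bar{H}$-factor deleted throughout: $\G^{00}_A \cap \bar{N}$ is an $A$-type-definable, bounded index subgroup of $\bar{N}$ invariant under conjugation by $\bar{H}$, hence under the $\bar{H}$-action, so it contains $N'$; $\G^{00}_A \cap \bar{H}$ is an $A$-type-definable bounded index subgroup of $\bar{H}$, so it contains $\bar{H}^{00}_A$; and $\bar{H}^{00}_A \ltimes_\phi N'$ is then an $A$-type-definable subgroup of bounded index, which forces equality.

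There is no genuine obstacle here; the statement is a pure corollary. The only point I would take care to state explicitly is that the specialization of Lemma~\ref{lem:twosemi} is legitimate — that is, replacing one of the groups and one of the actions by trivial ones preserves all the $0$-definability hypotheses and the compatibility condition on the actions — which is immediate.
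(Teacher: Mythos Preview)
Your proposal is correct and matches the paper's intended approach: the paper gives no separate proof for this corollary, treating it as an immediate specialization of Lemma~\ref{lem:twosemi} (as already remarked in the paragraph introducing the $(K,H)\ltimes_{\phi_1}^{\phi_2} N$ construction, the case of a trivial second action collapses to an ordinary semidirect product). Both your direct specialization and your alternative of replaying the argument with one factor deleted are exactly what is meant.
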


	\begin{lemma}
		\phantomsection\label{lem:utninv}
		\begin{enumerate}[(i)]
			\item Let $G :=  \UT_{n}(R) \ltimes_\phi (R,+)^n$, where $\phi(A)(v) := Av$. Then the smallest bounded index, invariant under the action of $\UT_n(\RR)$, and $A$-type-definable subgroup $N'$ of $(\RR,+)^n$ is equal to \[I_n \times I_{n-1} \times \ldots \times I_1.\]
			\item Let $G := \left(\T_{n}(R) \times (R^*,\cdot)\right) \ltimes_{\phi_1}^{\phi_2} (R,+)^n$, where $\phi_1(A)(v) := Av$ and $\phi_2(r)(v) := vr$. Then the smallest bounded index, invariant under the actions of both $\T_n(\RR)$ and $\RR^*$, and $A$-type-definable subgroup $N'$ of $(\RR,+)^n$ is equal to \[I_n' \times I_{n-1}' \times \ldots \times I_1'.\]
		\end{enumerate}
	\end{lemma}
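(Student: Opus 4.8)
The plan is to prove both items by the same argument applied to the two variants of the semidirect product, so I will describe the argument for (i) and indicate the modifications for (ii). By definition, $N'$ is the smallest $A$-type-definable subgroup of $(\bar R,+)^n$ of bounded index which is invariant under the action of $\UT_n(\bar R)$ by $v \mapsto Av$. So the goal is to show that $N_0 := I_n \times I_{n-1} \times \dots \times I_1$ is exactly this group. I would establish this by proving two things: (a) $N_0$ has the required properties — it is $A$-type-definable, of bounded index, and $\UT_n(\bar R)$-invariant — hence $N' \subseteq N_0$; and (b) $N_0 \subseteq N'$.

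For (a): $A$-type-definability and bounded index of $N_0$ are immediate, since each $I_i = I_{i,A}(\bar R)$ is $A$-type-definable of bounded index (recall $\bar R^{00}_A = I_1 \leq I_i \leq \bar R^{00}_{A,ring}$, so each has bounded index) and a finite product of such groups is again $A$-type-definable of bounded index. For $\UT_n(\bar R)$-invariance I would compute: an elementary matrix $E = \mathrm{I} + c e_{k,k+1}$ (with $c \in \bar R$) sends a column vector $v = (v_1,\dots,v_n)^{\mathrm T}$ to the vector which agrees with $v$ except in coordinate $k$, where it becomes $v_k + c\,v_{k+1}$. With the indexing $N_0 = I_n \times \dots \times I_1$, i.e. the $m$-th coordinate of a vector in $N_0$ lies in $I_{n+1-m}$, the relevant containment is $c \cdot I_{n+1-(k+1)} = c \cdot I_{n-k} \subseteq I_{n-k+1} = I_{n+1-k}$, which holds because $I_{i+1,A}(\bar R) \supseteq \bar R \cdot I_{i,A}(\bar R)$ by the very definition of the sequence $I_i$. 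Since these elementary matrices (together with $\mathrm{I}$) generate $\UT_n(\bar R)$, and each leaves $N_0$ invariant, so does all of $\UT_n(\bar R)$. Hence $N' \subseteq N_0$.

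For (b) I would argue coordinate by coordinate, using that $N' \cap (\{0\}^{m-1} \times (\bar R,+) \times \{0\}^{n-m})$, identified with a subgroup of $(\bar R,+)$, must contain various groups. First, projecting $N'$ to the bottom coordinate $m = n$ (the only coordinate not moved by any $v \mapsto Av$ except through additions of other coordinates) — more precisely, intersecting $N'$ with the subgroup where all coordinates but the last vanish — gives an $A$-type-definable bounded index subgroup of $(\bar R,+)$, so it contains $\bar R^{00}_A = I_1$; a symmetric remark handles why $N'$ surjects onto $I_1$ in that slot, but actually since $N'$ has bounded index and is $A$-type-definable, its image under any coordinate projection, as well as its intersection with any coordinate axis, is $A$-type-definable of bounded index, hence contains $I_1$. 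Then I bootstrap: having shown $N'$ contains $\{0\}^{n-m} \times I_1 \times \{0\}^{m-1}$ for the relevant slots, apply $\UT_n(\bar R)$-invariance with an elementary matrix $\mathrm{I} + c e_{k,k+1}$ to move a vector supported in coordinate $k+1$ into coordinate $k$, picking up $c \cdot I_j$; taking the subgroup generated over all $c \in \bar R$, and using the minimality in the definition of $I_{j+1,A}(\bar R)$ as the smallest $A$-type-definable group containing $\bar R \cdot I_{j,A}(\bar R)$, one gets $I_{j+1}$ in the next coordinate. Iterating up the coordinates yields $I_{n+1-m} \subseteq$ (the $m$-th slot of $N'$) for every $m$, and since $N'$ is a subgroup these combine to give $N_0 \subseteq N'$.

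The main obstacle I anticipate is making step (b) fully rigorous: one must be careful that "the intersection of $N'$ with a coordinate axis is $A$-type-definable of bounded index" and that the subgroup generated by $\bigcup_{c \in \bar R} (\{0\}\times\dots\times c\cdot I_j \times \dots)$ inside $N'$ is itself $A$-type-definable (so that minimality of $I_{j+1}$ applies) — this is where one genuinely uses that $N'$ is type-definable over $A$ and that the intersection of $N'$ with the relevant axis is a group with the same defining properties that single out $I_{j+1}$. For (ii), the argument is identical except that now the acting group is $\T_n(\bar R) \times \bar R^*$ acting by $v \mapsto Av$ on the left and $v \mapsto vr$ on the right; the extra right multiplications by $\bar R^*$ and the diagonal entries of $\T_n(\bar R)$ (which are also in $\bar R^*$) are precisely what force the relevant slot-groups to be closed under multiplication by $\bar R^*$ from both sides and to contain $\bar R \cdot (\text{previous slot}) \cdot \bar R^*$, so the same bootstrapping produces the sequence $I_i'$ instead of $I_i$, giving $N' = I_n' \times \dots \times I_1'$.
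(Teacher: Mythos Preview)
Your proposal is correct and follows essentially the same approach as the paper: you establish invariance of $I_n\times\cdots\times I_1$ (the paper does this via a direct computation of $Av$ for arbitrary $A\in\UT_n(\RR)$ rather than via elementary-matrix generators, but the content is the same), and you obtain the reverse inclusion by the same bootstrapping along the coordinate flag, using that the intersection of $N'$ with a coordinate axis is an $A$-type-definable bounded-index subgroup of $(\RR,+)$ containing $\RR\cdot I_j$, hence containing $I_{j+1}$. One small clarification: you do \emph{not} need the subgroup generated by $\bigcup_{c}\,c\cdot I_j$ to be $A$-type-definable; what you use (and what the paper uses, via its set $C$) is only that the axis-intersection of $N'$ is already an $A$-type-definable bounded-index subgroup containing $\RR\cdot I_j$, so minimality of $I_{j+1}$ applies directly.
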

	\begin{proof}
For $k \leq n$, let $S_k$ be the image of the embedding of $\RRa^k$ into $(\RR,+)^n$ by the map $(v_k,\ldots,v_1) \mapsto (0,\ldots,0,v_k,\ldots,v_1)$. We freely identify $S_k$ with $\RRa^k$.

First we show (i). We prove the following by induction on $k$:
		\[N' \cap S_k \supseteq I_k \times \ldots \times I_1.\]
		For $k=1$, simply observe that $N' \cap S_1$ is an $A$-type-definable subgroup of $\RRa$ of bounded index so it must contain $I_1$. Now, suppose the statement holds for some $k > 0$. Let
		\[C = \{c \in \RR: (c,0,0,\ldots,0) \in N' \cap S_{k+1}\}.\]
		We have $N' \cap S_{k+1} \supseteq C \times I_k \times \ldots \times I_1$. We conclude the induction by showing that $C \supseteq I_{k+1}$.
		Let $x \in I_k$ be arbitrary and take $v := (0,x,0,\ldots,0) \in N' \cap S_{k+1}$. For any $r \in \RR$ there is an $A \in \UT_n(\RR)$ such that $\phi(A)(v) = (rx,x,0,\ldots,0) \in S_{k+1}$. As $N'$ is invariant under $\UT_n(\RR)$, we have $\phi(A)(v) \in N'$ and also \mbox{$\phi(A)(v) - v = (rx,0,0,\ldots,0) \in N' \cap S_{k+1}$}. This shows that $C$ contains the set $\RR \cdot I_k$. 
Therefore, since $C$ is an $A$-type-definable subgroup of $\RRa$ of bounded index, it contains $I_{k+1}$.

		We have that $N'=  N' \cap S_n \supseteq I_n \times \ldots \times I_1$. As $I_n \times \ldots \times I_1$ is $A$-type-definable with bounded index, it remains to show that it is invariant under the action of $\UT_n(\RR)$. Take $v = (v_n, v_{n-1}, \ldots, v_1) \in \RR^n$. For a unitriangular matrix $A$, $Av$ is of the form $(v_n + v_n', v_{n-1} + v_{n-1}', \ldots, v_2 + v_2', v_1)$, where each $v_i'$ is an $\RR$-linear combination of $\{v_j : j < i\}$. So $v \in I_n \times \ldots \times I_1$ implies $Av \in I_n \times \ldots \times I_1$, since $I_i \supseteq I_i + \RR I_{i-1} + \ldots + \RR I_1$.

		We now prove (ii). First, let $r, r' \in \RR^*$ and let $I$ denote the $n \times n$ identity matrix. Since $N'$ is closed under the actions $\phi_1, \phi_2$, we have $rN'r'=(rI)N'r' \subseteq N'$, so $N'$ is closed under multiplication by $\RR^*$ from both left and right.
		
		Now, similarly to (i), we prove by induction on $k$ that \[N' \cap S_k \supseteq I_k' \times \ldots \times I_1'.\]
		For $k=1$, we again observe that $N' \cap S_1$ is an $A$-type-definable subgroup of $\RRa$ of bounded index. It is closed under multiplication by $\RR^*$ from both sides, so it must contain $I_1'$. Now, suppose the statement holds for some $k > 0$. Define $C$ as in the proof of item (i). Then $N' \cap S_{k+1} \supseteq C \times I_k' \times \ldots \times I_1'$ and we need to show $C \supseteq I_{k+1}'$.


		Let $x \in I_k'$ be arbitrary and take $v := (0,x,0,\ldots,0) \in N' \cap S_{k+1}$. For any $r \in \RR$ and $r' \in \RR^*$ there is an $A \in \T_n(\RR)$ such that $Avr' = (rxr',xr',0,\ldots,0) \in S_{k+1}$. We have $Avr' \in N'$ and also \mbox{$Avr' - vr' = (rxr',0,0,\ldots,0) \in N' \cap S_{k+1}$}. This shows that $C$ contains the set $\RR \cdot I_k' \cdot \RR^*$. Since $C$ is an $A$-type-definable subgroup of $\RRa$ of bounded index, closed under multiplication by $\RR^*$ from both left and right, it contains $I_{k+1}'$.

		We now have that $N'=  N' \cap S_n \supseteq I_n' \times \ldots \times I_1'$. As $I_n' \times \ldots \times I_1'$ is $A$-type-definable with bounded index, and clearly invariant under multiplication by $\RR^*$ from the right, it remains to show that it is invariant under the action of $\T_n(\RR)$. Take $v = (v_n, v_{n-1}, \ldots, v_1) \in \RR^n$. For a triangular matrix $A$, $Av$ is of the form $(r_n v_n + v_n', r_{n-1}v_{n-1} + v_{n-1}', \ldots, r_2 v_2 + v_2', r_1 v_1)$, where for each $i$, $v_i'$ is an $\RR$-linear combination of $\{v_j : j < i\}$ and $r_i \in \RR^*$. So $v \in I_n' \times \ldots \times I_1'$ implies $Av \in I_n' \times \ldots \times I_1'$, since $I_i' \supseteq \RR^* I_i' + \RR I_{i-1}' + \ldots + \RR I_1'$.
	\end{proof}

	We are now ready to prove the previously stated results.
	\begin{proof}[Proof of Proposition \ref{prop:utn00}]
		By Corollary \ref{cor:semi} and Lemma \ref{lem:utninv}(i), we have
		\begin{align*}
		\UT_{n+1}(\RR)^{00}_A & \cong \UT_{n}(\RR)^{00}_A \ltimes (I_n \times \ldots \times I_1)\\
		& \cong \QM{\UT_{n}(\RR)^{00}_A}{\begin{array}{c}I_n \\ \vdots \\ I_1 \end{array}}{0}{1},
		\end{align*}
where the isomorphisms are the obvious ones so that the first and the last group are in fact equal. Hence, the result follows by induction on $n$.
	\end{proof}
	
	\begin{proof}[Proof of Proposition \ref{prop:utn00quot}]
Write $H$ for the group of matrices on the right hand side of the formula in Proposition \ref{prop:utn00quot}.
		Let $F \colon \UT_n(\RR) \to H$ be the map sending a matrix $[a_{ij}] \in \UT_n(\RR)$ to the matrix $[b_{ij}] \in H$ defined by
		\[
		b_{ij} :=
		  \begin{cases} 
		  a_{ij} + I_{j-i} & \text{if } i < j, \\
  		  1 & \text{if } i = j, \\
  		  0 & \text{otherwise.}
		  \end{cases}
		\]
		As the group operation in $H$ is the ordinary matrix multiplication after the identification of the product of cosets $a + I_i$, $b + I_j$ with $ab + I_k$ for all $a,b \in \RR$ and $i,j < k$, the map $F$ is a group homomorphism. 
It is clearly onto, and Proposition \ref{prop:utn00} implies that $\ker (F) = \UT_n(\RR)^{00}_A$. Hence, $\UT_n(\RR)/\UT_n(\RR)^{00}_A \cong H$ as an abstract group.
By compactness of the logic topologies, in order to see that this isomorphism is a homeomorphism, it is enough to check that it is continuous. But this is clear, as the preimage by $F$ of a subbasic closed set $S$ in the product topology on $H$ (i.e. $S$ consists of all matrices in $H$ whose fixed $(i,j)$-th entry belongs to a fixed closed subset of $B/I_{j-i}$) is type-definable.
	\end{proof}

The proofs of Propositions \ref{prop:tn00}-\ref{prop:tn00quot} are similar to the two previous ones.

	\subsection{Connected components of abelian groups via characters}

In this subsection, we give a description in terms of characters of the type-definable connected component of any abelian group. 
It will be needed to get descriptions of some Bohr compactifications in the next subsection, and may prove to be useful in future studies. 

	Let $G$ be an abelian group definable in a structure $M$. Recall that $\Hom(G, S^1)$ is the group of all homomorphisms from $G$ to the compact group $S^1=\R/\Z=[-\frac{1}{2},\frac{1}{2})$. 
By $\defi{\Hom}(G,S^1)$ we denote the subgroup consisting of all definable homomorphisms in the sense explained before Fact \ref{cor:gpp}. Note that if all subsets of $G$ are definable, then $\defi{\Hom}(G,S^1) = \Hom(G,S^1)$.

The next fact follows from the proofs of Lemma 3.2 and Proposition 3.4 of \cite{GPP}.
    \begin{fact}\label{fact: extension of definable function}
Each $\chi\in\defi{\Hom}(G,S^1)$ extends uniquely to an $M$-definable 
homomorphism $\bar{\chi} \in \Hom(\bar{G},S^1)$, where $M$-definable means that the preimages of all closed subsets of $S^1$ are $M$-type-definable subsets of $\bar G$.
    \end{fact}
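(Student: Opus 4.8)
The plan is to reduce the statement to the analogous result for groups already proved in \cite{GPP}, namely Lemma 3.2 and Proposition 3.4 of that paper. Recall that $\chi \in \defi{\Hom}(G,S^1)$ means in particular that $\chi \colon G \to S^1$ is a definable function (in the sense of Definition \ref{defin}(1)) which happens also to be a group homomorphism. So I would first invoke the existence and uniqueness of a definable extension of $\chi$ to $\bar G$ as a \emph{function}: the construction in the proof of Lemma 3.2 of \cite{GPP} takes a definable $f\colon G \to C$ into a compact Hausdorff space $C$ and produces a unique $M$-definable (i.e.\ preimages of closed sets are $M$-type-definable) extension $\bar f \colon \bar G \to C$, characterized by the property that $\bar f(a) = c$ iff for every pair of disjoint closed neighbourhoods separating $c$ from another point, $a$ lies in the $\bar{}$-interpretation of the corresponding definable piece of $G$. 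Applying this to $f = \chi$ and $C = S^1$ yields a unique $M$-definable $\bar\chi\colon \bar G \to S^1$.

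The remaining point is that this extension is a group homomorphism. Here I would follow the argument in the proof of Proposition 3.4 of \cite{GPP}, which handles exactly the lifting of homomorphisms: one shows that the set
\[
\{(a,b) \in \bar G \times \bar G : \bar\chi(a) + \bar\chi(b) = \bar\chi(a+b)\}
\]
is $M$-type-definable (using that $\bar\chi$ is $M$-definable and that addition and the graph of addition on $S^1$ are closed), contains $G \times G$ (since $\chi$ is a homomorphism on $G$), and hence, being type-definable and containing a dense-in-the-logic-topology / saturation-theoretic sense subset, must be all of $\bar G \times \bar G$. More precisely, the standard argument is: if $(a,b)$ were a counterexample, then by type-definability there would be a formula over $M$ witnessing the failure, which by elementarity would already be witnessed in $G$, contradicting that $\chi$ is a homomorphism on $G$. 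This shows $\bar\chi \in \Hom(\bar G, S^1)$, and $M$-definability of $\bar\chi$ was already established, so $\bar\chi$ is the desired extension.

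For uniqueness: any two $M$-definable homomorphic extensions of $\chi$ are in particular two $M$-definable extensions of $\chi$ as a function, so they coincide by the uniqueness clause in the function-extension statement (Lemma 3.2 of \cite{GPP}). Thus nothing extra is needed beyond what is cited.

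I do not expect a genuine obstacle here, since both halves — extending a definable function and checking the extension preserves the group operation — are essentially verbatim from \cite{GPP}; the only care needed is to phrase ``$M$-definable'' consistently (preimages of closed sets are $M$-type-definable) and to note that $S^1$, being compact Hausdorff, and the graph of addition, being closed in $(S^1)^3$, make the relevant sets type-definable. The mildly delicate bookkeeping point is simply to confirm that the notion of definable homomorphism $\defi{\Hom}(G,S^1)$ used here matches the hypothesis ``$f$ definable'' in Lemma 3.2 of \cite{GPP}, which it does by the definition recalled before Fact \ref{cor:gpp}.
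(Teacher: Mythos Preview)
Your proposal is correct and matches the paper's approach exactly: the paper itself gives no proof beyond the sentence ``follows from the proofs of Lemma 3.2 and Proposition 3.4 of \cite{GPP},'' and you have simply unpacked how those two results yield the extension, the homomorphism property, and uniqueness. There is nothing to add.
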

Lemma 3.2 of \cite{GPP} provides the following construction of $\bar{\chi}$ from the fact above. Let $g \in \bar{G}$ and let $p(x) := \tp(g/M)$. For a formula $\phi(x) \in p$, let $\cl\left(\chi[\phi(G)]\right)$ denote the closure of $\chi[\phi(G)]$ in $S^1$. The set $\bigcap_{\phi \in p} \cl\left(\chi[\phi(G)]\right)$ is shown to be a singleton in $S^1$, and $\bar{\chi}(g)$ is defined to be the unique element of this singleton.



\begin{proposition} \label{fact:char3}
Suppose that $G$ is an abelian group $0$-definable in $M$.
Then
		\[(\bar{G},+)^{00}_M = \bigcap_{\chi\in\defi{\Hom}(G,S^1)} \bigcap_{m\in\N_{>0}}  \bar{\chi}^{-1}\left[\left(-\frac{1}{m},\frac{1}{m}\right)\right] =  \bigcap_{\chi\in\defi{\Hom}(G,S^1)} \bigcap_{m\in\N_{>0}}  \bar{\chi}^{-1}\left[\left[-\frac{1}{m},\frac{1}{m}\right]\right].\] 
	\end{proposition}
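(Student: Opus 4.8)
The plan is to identify $N := \bigcap_{\chi}\bigcap_m \bar\chi^{-1}[(-\frac1m,\frac1m)]$ (which equals the analogous intersection with closed intervals, since $(-\frac1m,\frac1m) \supseteq [-\frac1{m+1},\frac1{m+1}] \supseteq (-\frac1{m+2},\frac1{m+2})$, so the two nested intersections are cofinal in each other) with $(\bar G,+)^{00}_M$ by proving two inclusions. Note first that $N$ is a subgroup of $\bar G$: for each fixed $\chi$, $\bigcap_m \bar\chi^{-1}[(-\frac1m,\frac1m)] = \bar\chi^{-1}[\{0\}] = \ker\bar\chi$, which is a subgroup, and $N$ is the intersection of these kernels over all $\chi \in \defi{\Hom}(G,S^1)$. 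By Fact \ref{fact: extension of definable function}, each $\bar\chi$ is $M$-definable in the sense that preimages of closed subsets of $S^1$ are $M$-type-definable, so each $\ker\bar\chi = \bigcap_m \bar\chi^{-1}[[-\frac1m,\frac1m]]$ is $M$-type-definable, and hence $N$ is $M$-type-definable as well.

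For the inclusion $(\bar G,+)^{00}_M \subseteq N$: it suffices to show $N$ has bounded index, for then $N$ is an $M$-type-definable, bounded-index subgroup and contains the smallest such, namely $(\bar G,+)^{00}_M$. To see $N$ has bounded index, consider the map $\bar G \to \prod_{\chi \in \defi{\Hom}(G,S^1)} S^1$ with coordinates the $\bar\chi$; its kernel is exactly $N$, so $\bar G/N$ embeds into a product of copies of $S^1$ indexed by $\defi{\Hom}(G,S^1)$. Since $|\defi{\Hom}(G,S^1)| \le 2^{|M|+|\LL|} < \kappa$ and $|S^1| < \kappa$, this product has size $< \kappa$, so $[\bar G : N] < \kappa$.

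For the reverse inclusion $N \subseteq (\bar G,+)^{00}_M$: here I would use the description of the definable Bohr compactification of $(G,+)$ from Fact \ref{cor:gpp} together with the Pontryagin-theoretic description in Fact \ref{fact:bohrPontryagin}. Concretely, $\bar G/(\bar G,+)^{00}_M$ is a compact Hausdorff abelian group $K$, and by the results of \cite{GPP} (recalled before Fact \ref{cor:gpp}) the quotient map $b \colon G \to K$ is the universal definable compactification; moreover each continuous character $\psi \colon K \to S^1$ pulls back to a definable homomorphism $\psi \circ b \in \defi{\Hom}(G,S^1)$, and by Fact \ref{fact: extension of definable function} its unique $M$-definable extension to $\bar G$ is precisely $\psi$ composed with the quotient map $\bar G \to K$. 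Now if $x \in N$, then for every continuous character $\psi$ of $K$ we get $\overline{\psi \circ b}(x) = 0$, i.e. $\psi$ kills the image $\bar x$ of $x$ in $K$. But $K$ is a compact Hausdorff abelian group, so by Pontryagin duality (Fact \ref{fact:bohrPontryagin}, or the injectivity statement therein applied to the locally compact — indeed compact — group $K$) the continuous characters separate points of $K$; hence $\bar x = 0$, that is $x \in (\bar G,+)^{00}_M$.

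The main obstacle I expect is the bookkeeping in the reverse inclusion: one must verify carefully that the $M$-definable extension (in the sense of Fact \ref{fact: extension of definable function}) of a character of the form $\psi \circ b$ really is the factorization of $\psi$ through the logic-topology quotient $\bar G \to \bar G/(\bar G,+)^{00}_M$ — this is essentially the content of the proofs of Lemma 3.2 and Proposition 3.4 of \cite{GPP}, which identify $(\bar G,+)^{00}_M$ with the kernel of the map into the definable Bohr compactification and show that definable characters are exactly the pullbacks of continuous characters of that compactification. Granting that dictionary, the argument reduces, as above, to the separation of points of a compact abelian group by its characters, which is classical. One should also double-check the harmless point that $\bigcap_m \bar\chi^{-1}[(-\frac1m,\frac1m)] = \ker\bar\chi$, which is immediate since $\bigcap_m (-\frac1m,\frac1m) = \{0\}$ in $S^1$.
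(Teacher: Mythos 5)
Your proposal is correct and follows essentially the same route as the paper: the forward inclusion comes from each $\ker\bar\chi$ being an $M$-type-definable subgroup of bounded index (you phrase this via embedding $\bar G/N$ into a product of circles, while the paper simply notes each $\ker\bar\chi$ individually contains $(\bar G,+)^{00}_M$, but the idea is the same), and the reverse inclusion uses Fact \ref{fact: extension of definable function} together with the separation-of-points statement in Fact \ref{fact:bohrPontryagin} applied to the compact quotient $\bar G/(\bar G,+)^{00}_M$, exactly as in the paper (stated contrapositively there).
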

	\begin{proof}
The second equality is obvious. So we focus on the first equality.

 ($\subseteq$) Observe that  for every $\chi\in\defi{\Hom}(G,S^1)$, 
 \[\ker (\bar{\chi}) = \bigcap_{m\in\N_{>0}}  \bar{\chi}^{-1}\left[\left(-\frac{1}{m},\frac{1}{m}\right)\right]=\bigcap_{m\in\N_{>0}}  \bar{\chi}^{-1}\left[\left[-\frac{1}{m},\frac{1}{m}\right]\right]\]
 is an $M$-type-definable subgroup of $\bar{G}$ of bounded index.
 Hence, $\ker (\bar{\chi})$ contains $(\bar{G},+)^{00}_M$. 

($\supseteq$) Take $a\in \bar{G}\setminus (\bar{G},+)^{00}_M$. Let 
\[i\colon \bar{G} \to \bar{G}/ (\bar{G},+)^{00}_M \] 
be the ($M$-definable) quotient map. Since $i(a)$ is not the neutral element and $\bar{G}/ (\bar{G},+)^{00}_M$ is a compact abelian group, the second part of Fact \ref{fact:bohrPontryagin} yields $\varphi\in \Hom_c(\bar{G}/ (\bar{G},+)^{00}_M,S^1)$ with $\varphi(i(a)) \ne 0$. Then $\chi':= \varphi \circ i \colon \bar G \to S^1$ is a character which is definable over $M$. Hence, by Fact \ref{fact: extension of definable function}, $ \chi: = \chi' |_{G} \colon G \to S^1$ is a definable character with $\bar \chi =\chi'$. We get that $\bar{\chi}(a)\neq 0$, so $a\not\in\bar{\chi}^{-1}\left[\left(-\frac{1}{m},\frac{1}{m}\right)\right]$ for any $m\in\N$ such that $\frac{1}{m}<|\bar{\chi}(a)|$. 
	\end{proof}

\begin{remark}\label{remark: open vs closed intervals}
Let $G$ be any group equipped with the full structure, and $\chi \colon G \to S^1$ a (0-definable) character. Let $m >1$. Take the $0$-definable set $D: =\chi^{-1}\left[\left(-\frac{1}{m},\frac{1}{m}\right)\right]$ and write $\bar{D}$ for its interpretation in $\bar{G}$. Then:
\begin{enumerate}
\item[(i)]  $\bar D \subseteq \bar \chi^{-1}\left[\left[ -\frac{1}{m}, \frac{1}{m}\right]\right]$;
\item[(ii)]  $\bar D \supseteq \bar \chi^{-1}\left[\left(-\frac{1}{m},\frac{1}{m}\right)\right]$.
\end{enumerate}
\end{remark}

\begin{proof}
(i) The right hand side of the inclusion is 0-type-definable. If the inclusion fails, then there is a 0-definable subset $P$ of $G$ such that $\bar D \setminus \bar P$ is non-empty and disjoint from $\bar \chi^{-1}\left[\left[ -\frac{1}{m}, \frac{1}{m}\right]\right]$. But then we can find $r \in D \setminus P$. Since $r \in G$ and $r \in \bar D \setminus \bar P$, we have that $\chi(r) =\bar \chi(r)$ is not in $\left[-\frac{1}{m},\frac{1}{m }\right]$, a contradiction with the definition of $D$ and the fact that $r \in D$.

(ii) If this fails, then there is $r$ in $\bar \chi^{-1}\left[\left(-\frac{1}{m},\frac{1}{m}\right)\right] \cap \bar D^c$. Hence, by the definition of $\bar \chi$, we get that $\bar \chi(r)$ is in the closure of $\chi[D^c] \subseteq (-\frac{1}{m},\frac{1}{m})^c$, so $\bar \chi(r) \notin (-\frac{1}{m},\frac{1}{m})$, a contradiction.
\end{proof}

By Proposition  \ref{fact:char3} and Remark \ref{remark: open vs closed intervals}, we get


\begin{corollary}
Let $G$ be an abelian group equipped with the full structure. Then
\[(\bar{G},+)^{00}_\emptyset = \bigcap_{\chi\in\Hom(G,S^1)} \bigcap_{m\in\N_{>0}}  \overline{{\chi}^{-1}\left[\left(-\frac{1}{m},\frac{1}{m}\right)\right]}.\]
\end{corollary}

	\subsection{Triangular groups over some classical rings}\label{subsection: special rings}


	We apply Propositions \ref{prop:utn00quot} and \ref{prop:tn00quot} (more precisely, Corollary \ref{cor:utn00quot with dagger})
	to compute definable (so also classical by equipping the ring of coefficients with the full structure) Bohr compactifications of $\UT_n(R)$ and $\T_n(R)$ for the following classical rings $R$: fields, $\Z$, $K[\bar X]$ or even $K[G]$ (where $K$ is a field and $G$ is a group or semigroup).

	For each of the above classes of rings, we first consider the group $\UT_n(R)$. 
We show that the set $\bar{R} \cdot (\bar{R}, +)^{00}_R$ generates a group in finitely many steps, whence condition (ii) of Lemma \ref{lem: conditions equivalent to 000=00} is satisfied. This shows that $(\dagger)$ holds for each of the considered rings, so we can apply Corollary \ref{cor:utn00quot with dagger} to compute the definable Bohr compactification of $\UT_n(R)$. 
In fact, in these examples, the set $\bar{R} \cdot (\bar{R}, +)^{00}_R$ generates $\RRiT_R$ in one step, i.e.  $\bar{R} \cdot (\bar{R}, +)^{00}_R=\RRiT_R$. (On the other hand, one can show that the case of $\Z[X]$ equipped with the full structure requires exactly two steps, which will be shown in the aforementioned forthcoming paper of the third author with Tomasz Rzepecki).
For each $R$, after dealing with the compactification of $\UT_n(R)$, we follow with the computation of the compactification of $\T_n(R)$.

	We begin with the case of an infinite field $R = K$. For any $A$, $\bar{K} \cdot (\bar{K}, +)^{00}_A = \bar{K}$ and so for all $i \geq 2$ we have $I_{i,A}(\bar{K}) = \bar{K}$, the only non-trivial ideal of $\bar{K}$. Corollary \ref{cor:utn00quot with dagger} gives us 
that the definable Bohr compactification $\dBohr{\UT_n(K)}$ of $\UT_n(K)$ is 
	\[
	\begin{pmatrix}
	1 & \dBohr{(K, +)} & 0 & \ldots & 0 & 0 \\
	0 & 1 & \dBohr{(K, +)} & \ldots & 0 & 0 \\
	0 & 0 & 1 & \ldots & 0 & 0 \\
	\vdots & \vdots & \vdots & \ddots & \vdots & \vdots \\
	0 & 0 & 0 & \ldots & 1 & \dBohr{(K, +)} \\
	0 & 0 & 0 & \ldots & 0 & 1 \\
	\end{pmatrix} \cong \left(\dBohr{(K, +)}\right)^{n-1}.\]
We similarly note that for any $A$ we have $I_{i,A}'(\bar{K}) = \bar{K}$ for all $i \geq 1$, so, by Corollary \ref{cor:utn00quot with dagger}, the definable Bohr compactification $\dBohr{\T_n(K)}$ of $\T_n(K)$ is
	\[
	\begin{pmatrix}
	\dBohr{(K^*, \cdot)} & 0  & \ldots & 0 \\
	0 & \dBohr{(K^*, \cdot)} & \ldots & 0 \\
	\vdots & \vdots & \ddots & \vdots \\
	0 & 0 & \ldots & \dBohr{(K^*, \cdot)} \\
	\end{pmatrix} \cong \left(\dBohr{(K^*, \cdot)}\right)^{n}.\]
	
	We now work with $R := \Z$. 
	
	\begin{lemma}
	\label{lem:UTZ}
		$\ZZ \cdot \ZZaT_\Z=\ZZaD$.
	\end{lemma}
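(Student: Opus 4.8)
The inclusion $\ZZ\cdot\ZZaT_\Z\subseteq\ZZaD$ is the easy half, and I would dispatch it first: as recalled just before Example~\ref{ex:Z}, $\ZZaD=\bigcap_{n\neq 0}n\ZZ$ is an ideal of $\ZZ$, and it contains $\ZZaT_\Z$ by Remark~\ref{rem: basic inclusions between  components} (this containment is also part of Example~\ref{ex:Z}); hence $\ZZ\cdot\ZZaT_\Z\subseteq\ZZ\cdot\ZZaD\subseteq\ZZaD$.

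For the reverse inclusion the plan is to use saturation to produce, for each $b\in\ZZaD$, a divisor of $b$ lying in $\ZZaT_\Z$. By Proposition~\ref{fact:char3} applied to $(\Z,+)$ with the full structure (so that $\defi{\Hom}(\Z,S^1)=\Hom(\Z,S^1)$ and parameters from $\Z$ are redundant), $\ZZaT_\Z=\bigcap_{\chi\in\Hom(\Z,S^1)}\ker\bar\chi=\bigcap_{\chi}\bigcap_{m\in\N_{>0}}\bar\chi^{-1}\!\left[\left[-\tfrac1m,\tfrac1m\right]\right]$; write $\pi(x)$ for a partial type over $\Z$ defining this subgroup. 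I would then show that the partial type $p(x):=\pi(x)\cup\{\exists y\,(x\cdot y=b)\}$ over the small set $\Z\cup\{b\}$ is consistent; by $\kappa$-saturation and compactness it is enough to find, for each finite set of characters $\chi_1,\dots,\chi_k$ (say $\chi_i(1)=\alpha_i\in\R/\Z$) and each $m\in\N_{>0}$, an element $q_0\in\ZZ$ with $q_0\in\bar\chi_i^{-1}\!\left[\left[-\tfrac1m,\tfrac1m\right]\right]$ for all $i$ and $q_0\mid b$. Here Dirichlet's theorem on simultaneous Diophantine approximation gives a nonzero \emph{integer} $q_0$ with $\|q_0\alpha_i\|\le\tfrac1m$ for all $i$ (where $\|\cdot\|$ is the distance to the nearest integer), so $\bar\chi_i(q_0)=\chi_i(q_0)\in\left[-\tfrac1m,\tfrac1m\right]$ for every $i$; and since $q_0$ is a nonzero \emph{standard} integer, $q_0\mid b$ because $b\in\ZZaD\subseteq q_0\ZZ$, i.e.\ $\ZZ\models\exists y\,(q_0y=b)$. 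Thus $q_0$ witnesses the chosen finite fragment, $p$ is consistent, and any $a\models p$ satisfies $a\in\ZZaT_\Z$ together with $ra=b$ for some $r\in\ZZ$; hence $b\in\ZZ\cdot\ZZaT_\Z$.

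The only genuinely substantive point is the finite-satisfiability step, where the two faces of the problem meet: $\ZZaD=\bigcap_{n\neq 0}n\ZZ$ forces every nonzero \emph{standard} integer to divide every element of $\ZZaD$, while membership in $\ZZaT_\Z=\bigcap_\chi\ker\bar\chi$ is, at any finite stage, a constraint involving only finitely many characters, and Dirichlet approximation is exactly the bridge that supplies a single standard integer meeting both demands; everything else (the easy inclusion, the saturation bookkeeping, and passing from ``$a$ divides $b$'' to ``$b\in\ZZ\cdot\ZZaT_\Z$'') is routine. I would close by remarking that, combined with Example~\ref{ex:Z}, this identifies $\ZZ\cdot\ZZaT_\Z$ with $\ZZiT_\Z$, so the sequence $I_i$ for $R=\Z$ stabilizes after one step and condition $(\dagger)$ holds for $\Z$.
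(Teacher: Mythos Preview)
Your proof is correct, and its skeleton is the same as the paper's: for the nontrivial inclusion, produce a nonzero \emph{standard} integer inside each approximation to $\ZZaT_\Z$, observe that every element of $\ZZaD=\bigcap_{n\ne 0}n\ZZ$ is divisible by any nonzero standard integer, and conclude by compactness/saturation.

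The difference is in how that nonzero standard integer is found, and how compactness is packaged. The paper works directly with an arbitrary downward-directed family $\{P_i\}$ of $0$-definable sets whose intersection is $\ZZaT_\Z$, picks any $n_i\in P_i(\Z)\setminus\{0\}$ (which exists simply because each $P_i$ is thick, hence $P_i(\Z)$ is infinite), and then argues at the level of sets: $\ZZaD\subseteq n_i\ZZ\subseteq\ZZ\cdot P_i(\ZZ)$ for every $i$, so by compactness $\ZZaD\subseteq\ZZ\cdot\ZZaT_\Z$. You instead route through the character description of $\ZZaT_\Z$ (Proposition~\ref{fact:char3}) and invoke Dirichlet's simultaneous approximation theorem to manufacture the nonzero integer, then argue element-by-element via a saturation/type-realization step. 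Both packagings of compactness are equivalent here, but Dirichlet is more than is needed: the only point is that a finite intersection of thick subsets of $\Z$ is thick, hence infinite, hence contains a nonzero element --- exactly what the paper's one-line choice of $n_i$ amounts to. What your route buys is an explicit, concrete source for the integer; what the paper's route buys is brevity and independence from both Proposition~\ref{fact:char3} and external number theory. Your closing remark about $(\dagger)$ for $\Z$ agrees with the paper.
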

\begin{proof}
Since $(\ZZ,+)^0 =\Z^0$ is an ideal, we clearly have $ \ZZ \cdot \ZZaT_\Z \subseteq \ZZaD$, so it is remains to prove $\ZZaD \subseteq \ZZ \cdot \ZZaT_\Z$. The group $\ZZaT_\Z$ is the intersection of a downward directed by inclusion family $\{P_i(\ZZ)\}_{i\in I}$ of 0-definable sets. For every $i \in I$ we can find $n_i \in P_i(\Z) \setminus \{0\}$. Then $n_i \cdot \ZZ \subseteq P_i(\ZZ) \cdot \ZZ = \ZZ \cdot P_i(\ZZ)$. Thus, $(\ZZ,+)^0 = \bigcap_{n\in\N_{>0}}n\cdot\ZZ \subseteq \ZZ\cdot  P_i(\ZZ)$. By compactness, we conclude that 
\[(\ZZ,+)^0\subseteq \ZZ \cdot \bigcap_{i \in I} P_i(\ZZ) =  \ZZ \cdot \ZZaT_\Z.\qedhere\]
\end{proof}

This lemma implies that $(\dagger)$ holds for $R=\Z$.
The quotient $\ZZa/I_{1,\Z}(\ZZ)$ is the definable Bohr compactification $\dBohr{(\Z,+)}$ of $(\Z,+)$, whereas $\ZZa/I_{2,\Z}(\ZZ) = \ZZa/\ZZaD$ is $\hat{\Z}$, i.e. the profinite completion of $\Z$. 
So, by Corollary \ref{cor:utn00quot with dagger}, we get
that the definable Bohr compactification $\dBohr{\UT_n(\Z)}$ of $\UT_n(\Z)$ is 
			\[\begin{pmatrix}
			1 & \dBohr{(\Z,+)} & \hat{\Z} & \ldots & \hat{\Z} & \hat{\Z} \\
			0 & 1 & \dBohr{(\Z,+)} & \ldots & \hat{\Z} & \hat{\Z} \\
			0 & 0 & 1 & \ldots & \hat{\Z} & \hat{\Z} \\
			\vdots & \vdots & \vdots & \ddots & \vdots & \vdots \\
			0 & 0 & 0 & \ldots & 1 & \dBohr{(\Z,+)} \\
			0 & 0 & 0 & \ldots & 0 & 1 \\
			\end{pmatrix}.\]
	
    One easily gets a description of the (topological) connected component of $\dBohr{\UT_n(\Z)}$. For a topological group $G$, we will denote its topological connected component as $G^{t}$ in order to avoid confusions with its model-theoretic components.
	\begin{corollary}
		${\left(\dBohr{\UT_n(\Z)}\right)}^t \cong \left(\left(\dBohr{(\Z,+)}\right)^t\right)^{n-1}.$
	\end{corollary}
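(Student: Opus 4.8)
The plan is to read the corollary directly off the explicit description of $\dBohr{\UT_n(\Z)}$ obtained just above. As a topological space, that matrix group is a finite product: the $n-1$ super-diagonal slots contribute a factor $\big(\dBohr{\Z}\big)^{n-1}$, and the $\binom{n-1}{2}$ slots strictly farther from the diagonal contribute a factor $\big(\hat{\Z}\big)^{\binom{n-1}{2}}$. I would then invoke the standard fact that the identity component of a finite product of topological spaces is the product of the identity components of the factors. Since $\hat{\Z}$ is profinite, hence totally disconnected, its identity component is trivial, and therefore, as a subset,
\[\big(\dBohr{\UT_n(\Z)}\big)^{t} = \big(\big(\dBohr{\Z}\big)^{t}\big)^{n-1}\times\{0\}^{\binom{n-1}{2}},\]
that is, $\big(\dBohr{\UT_n(\Z)}\big)^{t}$ consists exactly of the matrices whose super-diagonal entries lie in $\big(\dBohr{\Z}\big)^{t}$ and all of whose other strictly upper entries vanish. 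Moreover, since this subset is a product of subsets of the factors, the subspace topology it inherits is the product topology on $\big(\big(\dBohr{\Z}\big)^{t}\big)^{n-1}$.

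The only point needing an actual argument is the identification of the group structure carried by this set. Using the matrix multiplication with the coset identifications from Proposition \ref{prop:utn00quot}, the super-diagonal entry in position $(i,i+1)$ of a product $MM'$ is simply $M_{i,i+1}+M'_{i,i+1}$ (coset sum in $\dBohr{\Z}$). For an entry of $MM'$ strictly farther from the diagonal, every summand in the product formula contains a factor that is the image under $\pi\colon\dBohr{\Z}\to\hat{\Z}$ of a super-diagonal entry of $M$ or of $M'$, i.e.\ the image of an element of $\big(\dBohr{\Z}\big)^{t}$; as $\pi$ is a continuous homomorphism into the totally disconnected group $\hat{\Z}$, it annihilates the connected subgroup $\big(\dBohr{\Z}\big)^{t}$, so each such product is $0$ and the entry vanishes (the remaining summands being the already-vanishing entries of $M$ and $M'$). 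Hence $\big(\dBohr{\UT_n(\Z)}\big)^{t}$ is closed under the group operations and its operation is just coordinatewise addition on the super-diagonal. Combined with the topological identification from the first paragraph, this yields $\big(\dBohr{\UT_n(\Z)}\big)^{t}\cong\big(\big(\dBohr{\Z}\big)^{t}\big)^{n-1}$ as topological groups.

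I expect the only mild obstacle to be exactly this vanishing of the cross-terms, which amounts to the observation that $\pi$ kills $\big(\dBohr{\Z}\big)^{t}$; everything else is formal bookkeeping with product topologies and with the description in Proposition \ref{prop:utn00quot}. One could also phrase the final step more abstractly: the continuous ``super-diagonal'' epimorphism $\dBohr{\UT_n(\Z)}\to\big(\dBohr{\Z}\big)^{n-1}$ restricts on $\big(\dBohr{\UT_n(\Z)}\big)^{t}$ to a continuous homomorphism onto $\big(\big(\dBohr{\Z}\big)^{t}\big)^{n-1}$ which, by the first paragraph, is a bijection, and a continuous bijective homomorphism between compact Hausdorff groups is a topological isomorphism.
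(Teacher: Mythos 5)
Your argument is correct and follows the same route as the paper: use the total disconnectedness of $\hat{\Z}$ to localize the identity component to the super-diagonal slots, then identify the result with the $(n-1)$-fold product of $(\dBohr{\Z})^t$. You make explicit one point the paper leaves to the reader, namely that the set of matrices with super-diagonal entries in $(\dBohr{\Z})^t$ and all other strictly upper entries zero is actually closed under the matrix multiplication and carries coordinatewise addition, because $\pi\colon\dBohr{\Z}\to\hat{\Z}$ kills the connected component; both your direct verification and the more abstract phrasing via the super-diagonal projection (a continuous bijective homomorphism of compact Hausdorff groups is a topological isomorphism) are sound ways to close that gap.
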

	\begin{proof}
	As the ring $\hat{\Z}$ is totally disconnected, we have
	\begin{align*}
	\left(\dBohr{\UT_n(\Z)}\right)^t & \cong
	\begin{pmatrix}
	1 & \dBohr{(\Z,+)} & 0 & \ldots & 0 & 0 \\
	0 & 1 & \dBohr{(\Z,+)} & \ldots & 0 & 0 \\
	0 & 0 & 1 & \ldots & 0 & 0 \\
	\vdots & \vdots & \vdots & \ddots & \vdots & \vdots \\
	0 & 0 & 0 & \ldots & 1 & \dBohr{(\Z,+)} \\
	0 & 0 & 0 & \ldots & 0 & 1 \\
	\end{pmatrix}^t \\
	& \cong \left(\left(\dBohr{(\Z,+)}\right)^{n-1}\right)^t = \left(\left(\dBohr{(\Z,+)}\right)^t\right)^{n-1}. \qedhere
	\end{align*}
	\end{proof}


	Moving to $\T_n(\Z)$, observe that $\ZZ^* = \Z^* =  \{1, -1\}$, and hence $I_i' = I_i$ for all $i$. Then, by Proposition \ref{prop:tn00quot} or Corollary \ref{cor:utn00quot with dagger}, 
		the definable Bohr compactification $\dBohr{\T_n(\Z)}$ of $\T_n(\Z)$ is
		\[\begin{pmatrix}
		\pm 1 & \dBohr{(\Z,+)} & \hat{\Z} & \ldots & \hat{\Z} & \hat{\Z} \\
		0 & \pm 1 & \dBohr{(\Z,+)} & \ldots & \hat{\Z} & \hat{\Z} \\
		0 & 0 & \pm 1 & \ldots & \hat{\Z} & \hat{\Z} \\
		\vdots & \vdots & \vdots & \ddots & \vdots & \vdots \\
		0 & 0 & 0 & \ldots & \pm 1 & \dBohr{(\Z,+)} \\
		0 & 0 & 0 & \ldots & 0 & \pm 1 \\
		\end{pmatrix}.\]


	We are now interested in the rings of polynomials $R:=K[\bar X]$, where $K$ is an arbitrary infinite field and $\bar X$ is a (possibly infinite) tuple of variables. We show that $\RR \cdot \RRaT_R =\bar R =\RRiD_R$ (i.e. again $\RR \cdot \RRaT_R$ generates $\RRiD_R$ in a single step). 
In fact, we will work more generally with any ring $R$ containing an infinite subfield $K$, covering also rings of the form $K[G]$, where $G$ is a group or semigroup.

Recall the notion of a thick set from \cite[Definition 3.1]{modcon}. 

\begin{definition}
A subset $D$ of a group is said to be thick if it is symmetric and there is a natural number $n>0$ such that for any elements $g_0,\dots,g_{n-1}$ there exist $i<j<n$ with $g_i^{-1}g_j \in D$.
\end{definition}
By compactness, it is clear that for any $A$-definable group $G$, each definable superset $\bar D$ of $\bar G^{00}_A$ contains a definable superset of $\bar G^{00}_A$ that is thick in $\bar G$, namely $\bar D \cap \bar D^{-1}$. Hence $\bar G^{00}_A$ is the intersection of some directed family of $A$-definable thick subsets of $\bar G$.
	
	Note that for an arbitrary (unital) ring $R$, $\RR \cdot \RRaT_A \subseteq \RRiT_A = \RRiD_A$. Hence, by compactness, we get
\begin{lemma}\label{very basic lemma2}
Let $R$ be any ring. 
\begin{enumerate}
\item[(i)] $\RR \cdot \RRaT_A =\RRiD_A$ if and only if for every $A$-definable superset $P$ of $(\RR,+)^{00}_A$ there is an $A$-definable two-sided (or just left or right) ideal $P'$ of $\RR$ of finite index with $ P' \subseteq \bar R P$.
\item[(ii)] $\RR \cdot \RRaT_A =\bar R$  if and only if for every $P$ as in (i), $\bar R = \bar R P$.
\item[(iii)] $\RR^* \cdot \RRaT_A =\bar R$  if and only if for every $P$ as in (i), $\bar R = \RR^* P$.
\end{enumerate}
\end{lemma}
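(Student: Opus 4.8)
The plan is to deduce all three equivalences from a single saturation (compactness) argument — presumably what the phrase ``by compactness, we get'' refers to. The first step is to fix the downward directed family $\{P_i\}_{i\in I}$ of \emph{all} $A$-definable supersets of $(\RR,+)^{00}_A$, so that this family is closed under finite intersections and $\bigcap_{i\in I}P_i=\RRaT_A$. Since the universe $\RR$ is $0$-definable and $\RR^*$ is $0$-definable (via $\exists y\,(xy=yx=1)$), each set $\RR\cdot P_i$ and $\RR^*\cdot P_i$ is $A$-definable; this is exactly the observation that makes compactness applicable. I would also recall the already-available inclusion $\RR\cdot\RRaT_A\subseteq\RRiT_A=\RRiD_A$ stated just before the lemma.

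The crux — and the only step I expect to be nonroutine — is the identity $\RR\cdot\RRaT_A=\bigcap_{i\in I}\RR\cdot P_i$, together with its unit analogue $\RR^*\cdot\RRaT_A=\bigcap_{i\in I}\RR^*\cdot P_i$. The inclusion ``$\subseteq$'' is immediate from $\RRaT_A\subseteq P_i$. For ``$\supseteq$'', given $x$ in the right-hand side one forms the partial type $\Sigma(r,p)$ over $A\cup\{x\}$ consisting of the formulas ``$p\in P_i$'' for all $i\in I$ together with $x=r\cdot p$ (and, in the unit version, also $\exists s\,(rs=sr=1)$). A finite fragment of $\Sigma$ mentions only $P_{i_1},\dots,P_{i_n}$; directedness yields $j$ with $P_j\subseteq P_{i_1}\cap\dots\cap P_{i_n}$, and the hypothesis $x\in\RR\cdot P_j$ (resp. $x\in\RR^*\cdot P_j$) supplies a realization of that fragment. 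By $\kappa$-saturation $\Sigma$ is realized by some $(r,p)$, whence $p\in\bigcap_i P_i=\RRaT_A$ and $x=r\cdot p\in\RR\cdot\RRaT_A$ (resp. $\in\RR^*\cdot\RRaT_A$). As a byproduct one sees that $\RR\cdot\RRaT_A$ and $\RR^*\cdot\RRaT_A$ are themselves $A$-type-definable (intersections of $A$-definable sets).

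With the crux in hand the equivalences are pure bookkeeping. For (i): if $\RR\cdot\RRaT_A=\RRiD_A$ and $P$ is an $A$-definable superset of $(\RR,+)^{00}_A$, then $\RRiD_A\subseteq\RR\cdot P$; writing $\RRiD_A=\RRiD_{A,ideal}$ as the intersection of the directed family of all $A$-definable finite-index two-sided ideals and applying compactness against the $A$-definable set $\RR\cdot P$ produces a finite subintersection $P'$ of that family — again an $A$-definable finite-index two-sided ideal — with $P'\subseteq\RR\cdot P$. Conversely, applying the right-hand condition to each $P_i$ gives $A$-definable finite-index ideals $P_i'\subseteq\RR\cdot P_i$; since $\RRiD_A\subseteq P_i'$ by definition of $\RRiD_A$, we get $\RRiD_A\subseteq\bigcap_i\RR\cdot P_i=\RR\cdot\RRaT_A$, and with the reverse inclusion recorded earlier this is an equality. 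Parts (ii) and (iii) run identically with $\bar R$ in place of $\RRiD_A$, the ``insert a definable ideal in between'' step now being vacuous: one direction uses $\RRaT_A\subseteq P$ to get $\bar R=\bar R\cdot\RRaT_A\subseteq\bar R\cdot P\subseteq\bar R$ (resp. $\bar R=\RR^*\cdot\RRaT_A\subseteq\RR^*\cdot P\subseteq\bar R$), and the other applies the hypothesis to every $P_i$ and invokes the crux: $\bar R=\bigcap_i\bar R\cdot P_i=\RR\cdot\RRaT_A$ (resp. $\bar R=\bigcap_i\RR^*\cdot P_i=\RR^*\cdot\RRaT_A$). I foresee no real obstacle beyond keeping straight which directed family each intersection ranges over and the fact that all the sets involved are genuinely $A$-definable.
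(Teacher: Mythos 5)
Your proof is correct and follows the same route the paper intends: the paper's entire proof is the remark that $\RR\cdot\RRaT_A\subseteq\RRiT_A=\RRiD_A$ followed by ``by compactness, we get,'' and your write-up supplies exactly the saturation argument (the identity $\RR\cdot\RRaT_A=\bigcap_i\RR\cdot P_i$ over the directed family of $A$-definable supersets $P_i$ of $\RRaT_A$, and its unit analogue) plus the routine bookkeeping that that comment is compressing.
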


	\begin{proposition}\label{proposition: one step}
		Let $R$ be any ring containing an infinite field $K$ (e.g. $R=K[\bar X]$), equipped with any structure. Then $\RR \cdot \RRaT_A =\bar R =\RRiD_A$.
	\end{proposition}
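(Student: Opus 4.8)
The plan is to use Lemma \ref{very basic lemma2}(ii): it suffices to show that for every $A$-definable superset $P$ of $(\RR,+)^{00}_A$ we have $\bar R = \bar R \cdot P$; equality with $\RRiD_A$ then follows from the trivial inclusion $\RR \cdot \RRaT_A \subseteq \RRiD_A$ together with the fact that $\bar R = \bar R \cdot P \subseteq \RR \cdot \RRaT_A$ forces $\RRiD_A = \bar R$ as well. So fix such a $P$. Replacing $P$ by $P \cap (-P)$ we may assume $P$ is symmetric, and since $P \supseteq (\RR,+)^{00}_A$, by the remark on thick sets recalled just before the proposition, $P$ contains an $A$-definable thick subset of $(\RR,+)$; shrinking further we may assume $P$ itself is thick, so there is $n > 0$ such that among any $n$ elements of $\RR$ two have difference in $P$.

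The key point is to exploit that $K \subseteq R$ is an infinite field, hence $\bar K \subseteq \bar R$ is an infinite field (indeed of unbounded size by saturation). Fix any $a \in \bar R$; I want to write $a = r \cdot p$ with $r \in \bar R$, $p \in P$. The idea: consider the elements $\{ \lambda a : \lambda \in \bar K\}$ — actually, better to consider products $\{\lambda^{-1} : \lambda \in \bar K^\times\}$ scaling a fixed element. Concretely, pick $n+1$ distinct nonzero scalars $\lambda_0, \dots, \lambda_n \in \bar K$ (possible as $\bar K$ is infinite). Look at the elements $\lambda_0 a, \dots, \lambda_n a \in \bar R$. By thickness of $P$ there are $i < j$ with $\lambda_i a - \lambda_j a = (\lambda_i - \lambda_j) a \in P$. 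Set $p := (\lambda_i - \lambda_j) a \in P$ and $r := (\lambda_i - \lambda_j)^{-1} \in \bar K^\times \subseteq \bar R$ (using that $\lambda_i \ne \lambda_j$ and $\bar K$ is a field). Then $r \cdot p = a$, and $a$ is arbitrary, so $\bar R = \bar R \cdot P$. This shows $\RR \cdot \RRaT_A = \bar R$; combined with $\RR \cdot \RRaT_A \subseteq \RRiD_A \subseteq \bar R$ we also get $\RRiD_A = \bar R$ (which of course is also immediate since the only finite-index subring-turned-ideal situation here is trivial, $\bar K$ being a field forcing $(\bar R,+)$ to have no proper finite-index additive subgroup of the relevant kind — but the clean argument is just the sandwich).

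The main obstacle, such as it is, is making sure the reduction to a \emph{thick} $P$ is legitimate and that one really may intersect $P$ with its own symmetrization and a thick subset while keeping it $A$-definable and still a superset of $(\RR,+)^{00}_A$ — but this is exactly the content of the remark recalled before the proposition, so it is routine. The genuinely substantive ingredient is the one-line combinatorial trick with $n+1$ distinct field scalars against an $n$-thick set, which is why the hypothesis "$R$ contains an infinite field" is exactly what is needed: it supplies arbitrarily many units $\lambda_i - \lambda_j$ to undo the scaling. I would also remark, as the proposition's statement and the surrounding text emphasize, that the same computation with $\RR^*$ in place of $\RR$ shows $\RR^* \cdot \RRaT_A = \bar R$ when the relevant differences $\lambda_i - \lambda_j$ are units of $R$ (which they are, lying in $\bar K^\times \subseteq \bar R^*$), so in particular $I_{1,A}'(\bar R) = \bar R$ as well — this is what feeds into the $\T_n$ computations for fields and for $K[\bar X]$.
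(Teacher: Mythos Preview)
Your proof is correct and follows essentially the same route as the paper: both reduce via Lemma \ref{very basic lemma2}(ii) to showing $\bar R = \bar R P$ for every $A$-definable thick superset $P$ of $(\RR,+)^{00}_A$, and both exploit the infinite field $K$ to produce units. The paper's version is marginally slicker --- it simply notes that $P \cap K$ is thick in $(K,+)$, hence contains some non-zero $d$, and then $\bar R = \bar R d \subseteq \bar R P$ finishes in one stroke, without fixing an element $a$ in advance. Your element-by-element variant (for each $a$, pick distinct $\lambda_0,\dots,\lambda_n \in K$ and use thickness to find $\mu = \lambda_i - \lambda_j \in K^\times$ with $\mu a \in P$, then $a = \mu^{-1}(\mu a)$) has a genuine payoff, however: since $\mu^{-1}$ lies in $K^\times \subseteq \bar R^*$, the identical computation already yields $\bar R = \bar R^* P$, i.e.\ Proposition \ref{proposition: tn for polynomial ring}, bypassing the paper's considerably longer character-based argument for that result. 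One cosmetic point: you invoke $\bar K$ as a field ``of unbounded size by saturation'', but $K$ need not be definable in the given structure, so $\bar K$ has no canonical meaning; your argument only needs finitely many scalars, and these can (and should) simply be taken in $K$ itself.
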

	\begin{proof}
		We need to check that the right hand side of item (ii) from Lemma \ref{very basic lemma2} is satisfied. For this, take any $A$-definable symmetric subset $P$ of $\bar R$ containing $(\RR,+)^{00}_A$. Then $P$ is thick. So $P \cap K$ is thick in $K$, hence there is a non-zero $d \in P \cap K$ (as $K$ is infinite). Since $d$ is invertible, $\bar R d =\bar R$. Thus, we have proved that $\bar R P =\bar R$, so we are done. 
	\end{proof}

	As a corollary, we extend the description of the definable Bohr compactification of $\UT_n(K)$ from the beginning of the subsection to $\UT_n(R)$ for any $R$ containing an infinite field $K$. By the last proposition, $I_{i,R}(\RR) = \RR$ for all $i > 1$, so Corollary \ref{cor:utn00quot with dagger} yields the following description of $\dBohr{\UT_n(R)}$:
	\[
	\begin{pmatrix}
	1 & \dBohr{(R,+)} & 0 & \ldots & 0 & 0 \\
	0 & 1 & \dBohr{(R,+)} & \ldots & 0 & 0 \\
	0 & 0 & 1 & \ldots & 0 & 0 \\
	\vdots & \vdots & \vdots & \ddots & \vdots & \vdots \\
	0 & 0 & 0 & \ldots & 1 & \dBohr{(R,+)} \\
	0 & 0 & 0 & \ldots & 0 & 1 \\
	\end{pmatrix} \cong \left(\dBohr{(R,+)}\right)^{n-1}.\]
	
	We now turn to the group $\T_n(R)$ for $R$ containing an infinite field $K$. 
 First, we need to show the following strengthening of Proposition \ref{proposition: one step} whose proof is less elementary, as it uses Proposition \ref{fact:char3}.

	\begin{proposition}\label{proposition: tn for polynomial ring}
		Let $R$ be any ring containing an infinite field $K$ (e.g. $R=K[\bar X]$), equipped with any structure. Then $\RR^* \cdot \RRaT_A=\bar R =\RRiD_A$.
	\end{proposition}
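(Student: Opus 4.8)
The plan is to go through the thick-set criterion of Lemma~\ref{very basic lemma2}(iii): that lemma reduces the statement to showing that $\bar R = \RR^* P$ for \emph{every} $A$-definable set $P \supseteq \RRaT_A$. So I would fix such a $P$. By the compactness observation recalled just before Lemma~\ref{very basic lemma2}, $P$ is thick in the additive group $\RRa$; pick $n \in \N_{>0}$ witnessing thickness, so that among any $n$ elements of $\RRa$ some two have their difference in $P$ (and note $0 \in P$).

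The key step is then to handle an arbitrary $a \in \bar R$ by spreading it out over the infinitely many units supplied by $K$. For $a=0$ there is nothing to do since $0\in P$. For $a\neq 0$, using that $K$ is infinite I would choose pairwise distinct $\lambda_0,\dots,\lambda_{n-1}\in K$; their pairwise differences $\lambda_j-\lambda_i$ ($i\neq j$) are nonzero elements of the field $K$, hence units of $R$ and of $\bar R$. Applying thickness of $P$ to the $n$ elements $\lambda_0 a,\dots,\lambda_{n-1}a$ of $\RRa$ yields $i<j$ with $(\lambda_j-\lambda_i)a=\lambda_j a-\lambda_i a\in P$, and then, writing $u:=\lambda_j-\lambda_i\in\RR^*$, one gets $a=u^{-1}(ua)\in\RR^* P$. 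This proves $\bar R=\RR^* P$ for every admissible $P$, so $\RR^*\cdot\RRaT_A=\bar R$ by Lemma~\ref{very basic lemma2}(iii); the final equality $\bar R=\RRiD_A$ then follows from the inclusion chain $\bar R=\RR^*\cdot\RRaT_A\subseteq\RR\cdot\RRaT_A\subseteq\RRiT_A=\RRiD_A\subseteq\bar R$, the middle identification being Proposition~\ref{prop:con} (and in any case this was already established in Proposition~\ref{proposition: one step}).

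I do not expect a real obstacle here; the only points needing a word of care are that the chosen $\lambda_i$ genuinely give an $n$-element tuple (which holds since $a\neq 0$ and the $\lambda_j-\lambda_i$ are units, though thickness applies even allowing repetitions) and that a difference of distinct field elements is invertible in $\bar R$ (immediate). As an alternative, one could instead run the argument through the character description $\RRaT_A=\bigcap_{\chi}\ker\bar\chi$ from Proposition~\ref{fact:char3} — reducing matters to the behaviour of $A$-definable characters of $\bar R$ on the $K$-line through $a$ — but the thick-set argument above is shorter, works for arbitrary (possibly noncommutative) $R\supseteq K$ and an arbitrary ambient structure, and avoids Pontryagin duality, so that is the route I would take.
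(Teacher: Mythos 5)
Your proof is correct and is a genuine simplification of the paper's argument, which the authors themselves flag as ``less elementary'' because it invokes Proposition~\ref{fact:char3} (the character description of $(\bar{G},+)^{00}_M$, which rests on Pontryagin duality). The paper's proof fixes a $K$-basis of $R$, uses the character description together with the facts that preimages and finite intersections of thick sets are thick to construct a thick set $D_J \subseteq K$ controlling $D$ along each finite set of basis vectors, and then finds a single scalar working uniformly across a whole $K$-linear combination. You bypass all of this by applying thickness of $P$ in $\RR$ directly to the tuple $(\lambda_0 a,\dots,\lambda_{n-1} a)$: the resulting difference $(\lambda_j-\lambda_i)a$ lands in $P$, and the decisive observation is that $\lambda_j-\lambda_i$ is a nonzero element of $K$ and hence a unit. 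This is the same elementary idea as the paper's short proof of Proposition~\ref{proposition: one step}, where one extracts a nonzero element of $P\cap K$, but rearranged so that the unit arises from the thickness condition evaluated on the $K$-line through $a$ --- exactly what is needed to land in $\RR^* P$ rather than merely in $\RR P$. Your proof also sidesteps the paper's preliminary reduction to $A=R$ and avoids Pontryagin duality entirely. What the paper's approach buys is an explicit family of thick subsets of $K$ attached to a $K$-basis, which may be of independent interest, but for proving this proposition your route is shorter and more elementary.
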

	\begin{proof}
Without loss of generality, we can assume that $A =R$ (enlarging $R$ and $A$ if necessary).

We need to check that the right hand side of item (iii) from Lemma \ref{very basic lemma2} is satisfied. For this, take any symmetric $R$-definable set $\bar D$ containing $\RRaT_A$. Note that $\bar D$ is thick in $\RR$ and that its realization $D$ in $R$ is also thick in $R$. We need to show that $R =R^*D$. 

Choose a basis $\{b_i\}_{i \in I}$ for $R$ treated as a linear space over $K$.
\begin{claim2}
For every finite $J \subseteq I$ there is a thick subset $D_J$ of $K$ such that $\sum_{j \in J} D_Jb_{j} \subseteq D$.
\end{claim2}

\begin{proof}[Proof of Claim.]		
By Proposition  \ref{fact:char3} and compactness, there are finitely many $R$-definable characters $\chi_0, \dots,\chi_{k-1} \colon (R,+) \to S^1$ and $m \in \mathbb{N}_{>0}$ such that $\chi_0^{-1}[[-\frac{1}{m}, \frac{1}{m}]] \cap \dots \cap \chi_{k-1}^{-1}[[-\frac{1}{m}, \frac{1}{m}]]\subseteq D$. Let $\chi_{ij} \colon (K,+) \to S^1$ be the character defined as the composition $\chi_i \circ e_j$, where $e_j \colon (K,+) \to R$ is given by $e_j(a) :=ab_j$. Put $n:=|J|$. Then, for $D_{ij}:=\chi_{ij}^{-1}[[-\frac{1}{mn}, \frac{1}{mn}]]$ (where $i=0,\dots,k-1$ and $j \in J$) we have 
$$\chi_i\left[\sum_{j \in J} D_{ij}b_j\right] = \sum_{j \in J}\chi_{ij}\left[D_{ij}\right] \subseteq \left[-\frac{1}{m}, \frac{1}{m}\right].$$
Hence, 
$$\sum_{j \in J} D_{ij}b_j \subseteq \chi_i^{-1}\left[\left[ -\frac{1}{m}, \frac{1}{m}\right]\right].$$
Since each $D_{ij}$ is thick (as the preimage  of a thick set by a homomorphism), the set $D_J$ defined as $\bigcap_{i<k,j \in J} D_{ij}$ is also thick (see \cite[Lemma 1.2]{gismatullin2010absolute}). By the last displayed formula, we have 
\begin{equation*}\sum_{j \in J} D_Jb_j \subseteq \bigcap_{i<k} \chi_i^{-1}\left[\left[ -\frac{1}{m}, \frac{1}{m}\right]\right] \subseteq D. \qedhere \end{equation*}
\end{proof}


\begin{claim2}
	For every $n \in \omega$ and thick subset $D_n$ of $K$ we have $K\cdot D_n^{\times n} = K^{\times n}$, where $X^{\times n}$ denotes the $n$-fold Cartesian power, and $\cdot $ coordinatewise multiplication.
\end{claim2}

\begin{proof}[Proof of Claim.]
	We need to show that for every $a_0,\dots, a_{n-1} \in K$  there exists $a \in K$ such that $(a_0,\dots,a_{n-1}) \in a \cdot D_n^{\times n}$. Since $0 \in D_n$, we can assume that all the $a_i$'s are non-zero. Then the last statement is equivalent to the condition $a_0^{-1}D_n \cap \dots \cap a_{n-1}^{-1}D_n \ne \{ 0\}$. 
	Now, since $D_n$ is thick and each $a_i^{-1} \cdot$ is an automorphism of $(K,+)$, each $a_i^{-1}D_n$ is thick, so the intersection of all of them is also thick by \cite[Lemma 1.2]{gismatullin2010absolute}, so contains a non-zero element, because $K$ is infinite.
\end{proof}

Now take any $x \in R$. Then there are a finite $J \subseteq I$ and $k_j \in K$ for $j \in J$ such that $x = \sum_{j \in J}k_j b_j$. By the first claim, there is a thick subset $D_J$ of $K$ such that $\sum_{j \in J}D_J b_j \subseteq D$. Hence, we have $x \in \sum_{j \in J}K b_j = K\sum_{j \in J}D_J b_j \subseteq KD$, where the equality $\sum_{j \in J}K b_j = K\sum_{j \in J}D_J b_j$ is provided by the second claim. Thus,  we have shown that $R \subseteq KD \subseteq R^*D$, so $R=R^*D$.
\end{proof}

	From Proposition \ref{proposition: tn for polynomial ring}, we obtain that $I_i' = \RR$ for all $i$. So, by Proposition \ref{prop:tn00quot} or Corollary \ref{cor:utn00quot with dagger}, $\dBohr{\T_n(R)}$ is 
		\[
		\begin{pmatrix}
		\dBohr{(R^*,\cdot)} & 0 & 0 & \ldots & 0 & 0 \\
		0 & \dBohr{(R^*,\cdot)} & 0 & \ldots & 0 & 0 \\
		0 & 0 & \dBohr{(R^*,\cdot)} & \ldots & 0 & 0 \\
		\vdots & \vdots & \vdots & \ddots & \vdots & \vdots \\
		0 & 0 & 0 & \ldots & \dBohr{(R^*,\cdot)} & 0 \\
		0 & 0 & 0 & \ldots & 0 & \dBohr{(R^*,\cdot)} \\
		\end{pmatrix} \cong \left(\dBohr{(R^*,\cdot)}\right)^{n}.\]

\subsection{Topological triangular groups}\label{subsection:topological}

We describe how our approach can be adapted to compute the classical Bohr compactification of $\UT_n(R)$ and $\T_n(R)$ treated as topological groups with the product  topology induced from the topology on $R$, where we assume that $R$ is a (unital) topological ring.

In order to do that, we need first to recall how to present model-theoretically the Bohr compactification of a topological group. So let $G$ be a topological group 0-definable in a first order structure $M$ in such a way that all open subsets of $G$ are 0-definable (e.g. we can work in $\Lsetp{M}$). Following \cite[Definition 2.3]{krupil}, we define $\bar G^{00}_{\topo}$ to be the smallest bounded index subgroup of $\bar G$ which is an intersection of some sets of the form $\bar U$ for $U$ open in $G$. Let $\mu$ denote the intersection of the $\bar U$'s for $U$ ranging over all open neighborhoods of the neutral element of $G$; $\mu$ is the group of infinitesimal elements of $\bar G$. Proposition 2.1 of \cite{GPP} or Fact 2.4 of \cite{krupil} says that  $\bar G^{00}_{\topo}$ is a normal subgroup of $\bar G$, and 
the quotient mapping $\pi \colon G \to \bar G/\bar G^{00}_{\topo}$ is the Bohr compactification of $G$ (treated as a topological group). Proposition 2.5 of \cite{krupil} describes $\bar G^{00}_{\topo}$ as the smallest $M$-type-definable [or 0-type-definable], bounded index subgroup of $\bar G$ which contains $\mu$. We will be using this description rather than the original definition.

We need the following variant of Lemma \ref{lem:twosemi}.

\begin{lemma}\phantomsection\label{lem: properties of topological components}
\begin{enumerate}[(i)]
\item Let $I_1,\dots,I_{n}$ be topological groups 0-definable in $M$. Equip $G: = I_n \times \dots \times I_{1}$ with the product topology, and assume that all open subsets of $G$ are  0-definable in $M$. Then $\mu = \mu_{I_n} \times \dots \times \mu_{I_{1}}$ (where $\mu_{I_i}$ is the group of infinitesimals in $I_i$), and $\bar G^{00}_{\topo} = {{}\bar I_{n}}^{00}_{\topo} \times \dots \times {{}\bar I_{1}}^{00}_{\topo}$.
\item Let $K$, $H$, and $N$ be topological groups $0$-definable in $M$, and let  $\phi_1$ and $\phi_2$ be continuous, $0$-definable, respectively left and right actions by automorphisms of $K$ on $N$ and of $H$ on $N$. Equip $G: = (K, H) \ltimes_{\phi_1}^{\phi_2} N$ with the product topology, and assume that all open subsets of $G$ are 0-definable in $M$. Then $\mu = (\mu_K \times \mu_H)  \ltimes_{\phi_1}^{\phi_2} \mu_N$ (where $\mu_K$, $\mu_H$, and $\mu_N$ are the groups of infinitesimals in $K$, $H$, and $N$, respectively), and $\G^{00}_{\topo} = (\bar{K}^{00}_{\topo} \times \bar{H}^{00}_{\topo}) \ltimes_{\phi_1}^{\phi_2} N'$, where $N'$ is the smallest $0$-type-definable, bounded index subgroup of $\bar{N}$ containing $\mu_N$ (equivalently, containing $\bar N^{00}_{\topo}$), and invariant under the actions of both $\bar{K}$ and $\bar{H}$.
\item Let $H$ and $N$ be topological groups 0-definable in $M$, and let $\phi$ be a continuous,  0-definable left action of $H$ on $N$ by automorphisms. 
Equip $G: = H \ltimes_\phi N$ with the product topology, and assume that all open subsets of $G$ are 0-definable in $M$. Then $\mu = \mu_H  \ltimes_\phi \mu_N$ (where $\mu_H$ and $\mu_N$ are the groups of infinitesimals in $H$ and $N$, respectively), and $\bar G^{00}_{\topo} = \bar H^{00}_{\topo} \ltimes_\phi N'$, where $N'$ is the smallest 0-type-definable, bounded index subgroup of $\bar{N}$ containing $\mu_N$ (equivalently, containing $\bar N^{00}_{\topo}$), and invariant under the action of $\bar{H}$.
\end{enumerate}
\end{lemma}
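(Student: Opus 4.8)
The three parts are not independent: (iii) is the special case of (ii) with $K$ trivial, and (i) follows from (ii) with both actions trivial by an easy induction on $n$, writing $I_n\times\dots\times I_1=(I_n\times\dots\times I_2)\times I_1$ and feeding in the inductive description of the components of $I_n\times\dots\times I_2$. So I would first dispose of (i) directly (it is needed inside (ii) anyway, to treat the $\bar K\times\bar H$ part), and then prove (ii), reading (iii) off at the end. Note that the hypothesis that all open subsets of $G$ are $0$-definable forces all open subsets of each factor ($I_i$, resp. $K$, $H$, $N$) to be $0$-definable, by intersecting with slices; hence throughout I may invoke Proposition 2.5 of \cite{krupil} for each of these groups: $\bar H^{00}_{\topo}$ is the smallest $M$-type-definable, bounded index subgroup of $\bar H$ containing the infinitesimals of $H$, and likewise for the others.

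\textbf{The infinitesimals.} The first step is the computation of $\mu$. Since the topology on $G$ is the product topology, a basis of neighbourhoods of the identity of $G$ is given by boxes, and $\mu=\bigcap\{\bar U: U\ni e\text{ open, }0\text{-definable}\}$ is then the product of the infinitesimal subgroups of the factors: $\mu=\mu_{I_n}\times\dots\times\mu_{I_1}$ in (i), and $\mu=\mu_K\times\mu_H\times\mu_N$ as a set in (ii), (iii). The set of infinitesimals of any $0$-definable topological group is automatically a subgroup of its monster model (a routine saturation and elementarity argument: if $VV\subseteq U$ then $\bar V\bar V\subseteq\overline{VV}\subseteq\bar U$). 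In case (ii) this means the product set $\mu_K\times\mu_H\times\mu_N$ is closed under the group law of $\bar G=(\bar K,\bar H)\ltimes_{\phi_1}^{\phi_2}\bar N$; extracting from that closure the elements $(k,e,e)\cdot(e,e,n')=(k,e,\phi_1(k)(n'))$ and $(e,h,e)\cdot(e,e,n')=(e,h,\phi_2(h)(n'))$ for $k\in\mu_K$, $h\in\mu_H$, $n'\in\mu_N$ shows that $\mu_N$ is invariant under $\phi_1(\mu_K)$ and $\phi_2(\mu_H)$, so indeed $\mu=(\mu_K\times\mu_H)\ltimes_{\phi_1}^{\phi_2}\mu_N$, as asserted.

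\textbf{The component.} Now I would replay the proof of Lemma \ref{lem:twosemi} with $\bar G^{00}_{\topo}$ in place of $\G^{00}_A$. First, $N'$ exists: the family of $M$-type-definable, bounded index subgroups of $\bar N$ that contain $\mu_N$ and are invariant under the actions of $\bar K$ and $\bar H$ is nonempty (it contains $\bar N$) and closed under small intersections, hence has a least element $N'$; requiring containment of $\bar N^{00}_{\topo}$ instead of $\mu_N$ gives the same $N'$, since $\bar N^{00}_{\topo}$ is the least $M$-type-definable bounded index subgroup of $\bar N$ containing $\mu_N$. One inclusion: since $\bar G^{00}_{\topo}\lhd\bar G$, the group $\bar G^{00}_{\topo}\cap\bar N$ is $M$-type-definable of bounded index in $\bar N$, invariant under conjugation by $\bar K\times\bar H$ — equivalently under the two actions, by the same observation as at the start of the proof of Lemma \ref{lem:twosemi} — and contains $\mu_N\subseteq\mu\subseteq\bar G^{00}_{\topo}$; hence it contains $N'$. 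Likewise $\bar G^{00}_{\topo}\cap(\bar K\times\bar H)$ is $M$-type-definable of bounded index in $\bar K\times\bar H$ and contains the infinitesimals $\mu_K\times\mu_H$ of $K\times H$, so by Proposition 2.5 of \cite{krupil} it contains $(\bar K\times\bar H)^{00}_{\topo}$, which equals $\bar K^{00}_{\topo}\times\bar H^{00}_{\topo}$ by part (i). As $\bar G^{00}_{\topo}$ is a subgroup containing $N'$ and $\bar K^{00}_{\topo}\times\bar H^{00}_{\topo}$, it contains the subgroup they generate, namely $(\bar K^{00}_{\topo}\times\bar H^{00}_{\topo})\ltimes_{\phi_1}^{\phi_2}N'$. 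For the reverse inclusion, $(\bar K^{00}_{\topo}\times\bar H^{00}_{\topo})\ltimes_{\phi_1}^{\phi_2}N'$ is a subgroup (using invariance of $N'$ under the restricted actions), it is $M$-type-definable, of bounded index (a product of three bounded indices), and it contains $\mu=(\mu_K\times\mu_H)\ltimes_{\phi_1}^{\phi_2}\mu_N$ because $\mu_K\subseteq\bar K^{00}_{\topo}$, $\mu_H\subseteq\bar H^{00}_{\topo}$, $\mu_N\subseteq N'$; so by minimality (Proposition 2.5 of \cite{krupil}) it contains $\bar G^{00}_{\topo}$. This proves (ii); part (iii) is the case $K=\{e\}$, and part (i) is obtained by the same pattern with trivial actions, so that $N'={\bar I_1}^{00}_{\topo}$, together with the induction described above.

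\textbf{Main obstacle.} There is no deep obstacle: the argument is essentially a topological overlay of Lemma \ref{lem:twosemi} and Corollary \ref{cor:semi}, with Proposition 2.5 of \cite{krupil} replacing the defining property of $\G^{00}_A$. The points that genuinely need care are (a) the computation of $\mu$ for the (double) semidirect product — specifically, verifying that $\mu_N$ is invariant under the actions of the infinitesimals of $K$ and $H$, which is exactly what makes the notation $(\mu_K\times\mu_H)\ltimes_{\phi_1}^{\phi_2}\mu_N$ legitimate — and (b) bookkeeping: keeping straight which ambient group each ${}^{00}_{\topo}$ and each index is computed in, so that the applications of Proposition 2.5 of \cite{krupil} and of part (i) to the subgroups $\bar K\times\bar H$ and $\bar N$ of $\bar G$ are valid.
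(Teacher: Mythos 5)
Your proposal is correct and follows essentially the same route as the paper: compute $\mu$ from the product topology, then rerun Lemma \ref{lem:twosemi} with $\bar G^{00}_{\topo}$ in place of $\G^{00}_A$, using the characterization of ${}^{00}_{\topo}$ from \cite{krupil} for the minimality step, with (i) established first and used to handle the $\bar K\times\bar H$ factor in (ii), and (iii) read off from (ii). One small slip: to extract $\phi_2(\mu_H)$-invariance of $\mu_N$ you want $(e,e,n')\cdot(e,h,e)=(e,h,n'h)$; the product $(e,h,e)\cdot(e,e,n')$ you wrote actually equals $(e,h,n')$ under the paper's group law $(k,h,n)\cdot(k',h',n')=(kk',hh',(kn')(nh'))$, though the corrected computation gives exactly what you need.
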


\begin{proof}
(i) follows easily from the definitions of infinitesimals and product topology, and the aforementioned characterization of $\bar G^{00}_{\topo}$ in terms of $\mu$.

(ii)  Note that $G$ is a topological group. Observe that all open subsets of $K$, of $H$, and of $N$ are 0-definable in $M$, so the objects $\mu_K$, $\mu_H$, $\mu_N$, $\bar K^{00}_{\topo}$, $\bar H^{00}_{\topo}$, and $\bar N^{00}_{\topo}$ are defined. As in (i), the equality $\mu = (\mu_K \times \mu_H)  \ltimes_{\phi_1}^{\phi_2} \mu_N$ is clear from definitions. Having this, the equality $\G^{00}_{\topo} = (\bar{K}^{00}_{\topo} \times \bar{H}^{00}_{\topo}) \ltimes_{\phi_1}^{\phi_2} N'$ follows as in Lemma \ref{lem:twosemi}, using the aforementioned characterization of $\bar G^{00}_{\topo}$ in terms of $\mu$.

(iii) follows from (ii).
\end{proof}

Let now $R$ be a (unital) topological ring. We work in $\Lsetp{R}$. Define a sequence $I_{i}(\RR)$, $i > 0$, of 0-type-definable subgroups of $\RRa$ as follows: $I_{1}(\RR) := \RRaT_{\topo}$, and for $i > 0$, $I_{i+1}(\RR)$ is the smallest 0-type-definable subgroup of $\RRa$ containing the set $\RR \cdot I_{i}(\RR)$. By Corollary \ref{corollary: charact. of R^00_topo}, we have
	\[\RRaT_{\topo} = I_{1}(\RR) \leq I_{2}(\RR) \leq \ldots \leq I_{i}(\RR) \leq \ldots \leq \RRiT_{\topo},\]
and all the comments right after the definition of $I_{i,A}$ in Subsection \ref{sec:unig} have their obvious counterparts. In particular, $I_j$ is constant for $j \geq i$ if and only of $I_i = \RRiT_{\topo}$.

Using Lemma \ref{lem: properties of topological components}, one can easily check that the proof of Lemma \ref{lem:utninv}(i) adapts to the present context, so we get the following variant of Proposition \ref{prop:utn00}.

	\begin{proposition} \label{prop:utn00 in topological case}
		Let $R$ be a (unital) topological ring. Then
		\[\UT_n(\RR)^{00}_{\topo} =
		\begin{pmatrix}
		1 & I_1 & I_2 & \ldots & I_{n-2} & I_{n-1} \\
		0 & 1 & I_1 & \ldots & I_{n-3} & I_{n-2} \\
		0 & 0 & 1 & \ldots & I_{n-4} & I_{n-3} \\
		\vdots & \vdots & \vdots & \ddots & \vdots & \vdots \\
		0 & 0 & 0 & \ldots & 1 & I_1 \\
		0 & 0 & 0 & \ldots & 0 & 1 \\
		\end{pmatrix},\]
		where $I_i = I_i(\RR)$.
	\end{proposition}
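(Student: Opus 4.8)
The plan is to mirror the proof of Proposition \ref{prop:utn00}, replacing the model-theoretic component $\UT_n(\RR)^{00}_A$ by the topological one $\UT_n(\RR)^{00}_{\topo}$ and the ordinary semidirect-product decomposition lemmas by their topological analogues from Lemma \ref{lem: properties of topological components}. First I would recall, from Subsection \ref{subsection: linear algebra}, that $\UT_{n+1}(R)$ decomposes as the semidirect product $\UT_n(R) \ltimes_\phi (R,+)^n$ with $\phi(A)(v) = Av$, and that this is a decomposition of topological groups when $R$ carries a ring topology and all spaces are given the product topology. Working in $\Lsetp{R}$, all open sets are $0$-definable, so the hypotheses of Lemma \ref{lem: properties of topological components}(iii) are met for $H := \UT_n(R)$, $N := (R,+)^n$.

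The core step is an adaptation of Lemma \ref{lem:utninv}(i) to the topological setting: I claim that the smallest $0$-type-definable, bounded index subgroup $N'$ of $(\RR,+)^n$ which contains $\mu_N = \mu^n$ and is invariant under the action of $\UT_n(\RR)$ is exactly $I_n \times I_{n-1} \times \dots \times I_1$, where now $I_i = I_i(\RR)$ is built from $\RRaT_{\topo}$ rather than $\RRaT_A$. The argument is literally the same induction on $k$ as in the proof of Lemma \ref{lem:utninv}(i): for $k=1$ one uses that $N' \cap S_1$ is a $0$-type-definable bounded index subgroup of $\RRa$ containing $\mu$ (the projection of $\mu_N$), hence contains $I_1 = \RRaT_{\topo}$ by the characterization of $\RRaT_{\topo}$ recalled in Subsection \ref{subsection:topological}; for the inductive step, conjugating $v = (0, x, 0, \dots, 0)$ with $x \in I_k$ by suitable unitriangular matrices produces $(rx, 0, \dots, 0) \in N' \cap S_{k+1}$ for all $r \in \RR$, so the set $C := \{c : (c,0,\dots,0) \in N' \cap S_{k+1}\}$ contains $\RR \cdot I_k$ and, being a $0$-type-definable bounded index subgroup containing $\mu$, contains $I_{k+1}$ by definition of the latter. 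For the reverse inclusion, one checks that $I_n \times \dots \times I_1$ is $0$-type-definable, has bounded index, contains $\mu^n$, and is $\UT_n(\RR)$-invariant, exactly as in the original proof, using $I_i \supseteq I_i + \RR I_{i-1} + \dots + \RR I_1$; that it contains $\mu^n$ follows since $\mu \subseteq \RRaT_{\topo} = I_1 \subseteq I_i$ for every $i$.

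Having this, I would feed it into Lemma \ref{lem: properties of topological components}(iii): applied to $G := \UT_{n+1}(R) \cong \UT_n(R) \ltimes_\phi (R,+)^n$ it gives
\[
\UT_{n+1}(\RR)^{00}_{\topo} = \UT_n(\RR)^{00}_{\topo} \ltimes_\phi (I_n \times \dots \times I_1) \cong \QM{\UT_n(\RR)^{00}_{\topo}}{\begin{array}{c} I_n \\ \vdots \\ I_1 \end{array}}{0}{1},
\]
where the isomorphisms are the obvious coordinatewise ones making the outer groups literally equal. The proposition then follows by induction on $n$, the base case $n = 1$ being trivial (the group is $\{1\}$). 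A mild point to be careful about is the base of the induction on $n$ versus the base $k=1$ of the inner induction, but these are routine.

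The main obstacle, such as it is, is purely bookkeeping: one must make sure that at every stage the relevant subgroups genuinely contain the infinitesimals $\mu$ (so that the topological component, not the ordinary one, is the right minimal object), and that Lemma \ref{lem: properties of topological components}(iii) applies, i.e. that $\phi$ is continuous and $0$-definable and that all open subsets of the semidirect product are $0$-definable — all of which hold automatically in $\Lsetp{R}$ and because $\phi$ is given by polynomial (hence continuous) formulas in the ring operations. No genuinely new idea beyond what is already in the proofs of Lemma \ref{lem:utninv}(i) and Proposition \ref{prop:utn00} is needed; the work is to verify that the $\mu$-containment is preserved through the induction, which it is because $\mu \subseteq I_1 \subseteq I_i$ throughout.
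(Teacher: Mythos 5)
Your proposal is correct and matches the paper's approach, which is precisely to adapt Lemma \ref{lem:utninv}(i) to the topological setting via Lemma \ref{lem: properties of topological components}(iii) and then run the same induction as in Proposition \ref{prop:utn00}. The one small redundancy is that in the inductive step you invoke ``bounded index and containing $\mu$'' for $C$ before concluding $I_{k+1} \subseteq C$, whereas the definition of $I_{k+1}(\RR)$ only requires $C$ to be a $0$-type-definable subgroup containing $\RR \cdot I_k$ (the other two properties follow automatically since $\mu \subseteq I_1 \subseteq I_k \subseteq \RR \cdot I_k$ as $R$ is unital); this is harmless.
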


Keeping in mind the identifications as in the discrete case described right after	 Proposition \ref{prop:utn00},
the proof of the next result is the same as for Proposition  \ref{prop:utn00quot} (using Proposition \ref{prop:utn00 in topological case} in place of \ref{prop:utn00}).

\begin{proposition}
		\label{prop:utn00quot in topological case}
		Let $R$ be a (unital) topological ring. Then the Bohr compactification of the topological group $\UT_n(R)$ equals
		\[\UT_n(\RR)/\UT_n(\RR)^{00}_{\topo}  \cong
		\begin{pmatrix}
		1 & B/I_1 & B/I_2 & \ldots & B/I_{n-2} & B/I_{n-1} \\
		0 & 1 & B/I_1 & \ldots & B/I_{n-3} & B/I_{n-2} \\
		0 & 0 & 1 & \ldots & B/I_{n-4} & B/I_{n-3} \\
		\vdots & \vdots & \vdots & \ddots & \vdots & \vdots \\
		0 & 0 & 0 & \ldots & 1 & B/I_1 \\
		0 & 0 & 0 & \ldots & 0 & 1 \\
		\end{pmatrix},\]
		where $B := \RRa$ and the topology on the right hand side is the product topology induced from the logic topologies on the quotients $B/I_i$. The quotient $B/I_1$ is exactly the Bohr compactification of the topological group $(R,+)$.
	\end{proposition}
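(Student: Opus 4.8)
The plan is to transcribe the proofs of Propositions \ref{prop:utn00} and \ref{prop:utn00quot}, replacing the abstract component $\UT_n(\RR)^{00}_A$ by the topological component $\UT_n(\RR)^{00}_{\topo}$ and using Proposition \ref{prop:utn00 in topological case} in place of Proposition \ref{prop:utn00}.

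First I would place myself in the framework of the topological $\bar G^{00}_{\topo}$ machinery. Working in $\Lsetp{R}$, the group $\UT_n(R)$ is $0$-definable (it is quantifier-free definable from $R$), it is a topological group in the product topology, and all of its open subsets are $0$-definable; its interpretation in $\bar R$ is $\UT_n(\RR)$. Hence, by the description of the Bohr compactification of a topological group recalled at the start of Subsection \ref{subsection:topological} (Proposition 2.1 of \cite{GPP}, Fact 2.4 and Proposition 2.5 of \cite{krupil}), the quotient map $\pi \colon \UT_n(R) \to \UT_n(\RR)/\UT_n(\RR)^{00}_{\topo}$, with the logic topology on the target (a compact Hausdorff group), is the Bohr compactification of the topological group $\UT_n(R)$; here $\UT_n(\RR)^{00}_{\topo}$ is the smallest bounded index, $0$-type-definable subgroup of $\UT_n(\RR)$ containing the infinitesimals, and Proposition \ref{prop:utn00 in topological case} identifies it with the displayed matrix group built from the groups $I_i = I_i(\RR)$.

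Next I would identify this quotient with the matrix group $H$ on the right-hand side of the statement. As in the discrete case (the discussion before Proposition \ref{prop:utn00quot}, whose obvious counterparts hold here), the non-decreasing sequence $I_1 \le I_2 \le \cdots$ satisfies $\RR \cdot I_i \subseteq I_{i+1}$, so for $i,j<k$ the product of a coset of $I_i$ and a coset of $I_j$ is a well-defined coset of $I_k$; with this identification, $H$ (matrices with $1$'s on the diagonal, $0$'s below it, and $(i,j)$-entry for $i<j$ lying in $B/I_{j-i}$, where $B = \RRa$) is a group under ordinary matrix multiplication. Define $F \colon \UT_n(\RR) \to H$ by sending $[a_{ij}]$ to the matrix whose $(i,j)$-entry for $i<j$ is $a_{ij} + I_{j-i}$. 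Then $F$ is a surjective group homomorphism, and Proposition \ref{prop:utn00 in topological case} gives $\ker F = \UT_n(\RR)^{00}_{\topo}$, so $F$ induces an abstract group isomorphism $\UT_n(\RR)/\UT_n(\RR)^{00}_{\topo} \cong H$. Both sides carry compact Hausdorff topologies (the logic topology, resp.\ the product of the logic topologies on the $B/I_i$), so to conclude that it is a homeomorphism it suffices to see that the induced map is continuous; this is immediate because $F$ pulls back a subbasic closed set of $H$ --- the set of matrices with a fixed $(i,j)$-entry in a fixed closed subset of $B/I_{j-i}$ --- to a type-definable, hence logic-closed, subset of $\UT_n(\RR)$. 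Finally, $B/I_1 = \RRa/\RRaT_{\topo}$ is the Bohr compactification of the topological group $(R,+)$ by the same machinery applied to $(R,+)$, since the infinitesimals of $(R,+)$ are precisely the ring infinitesimals $\mu$.

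I do not expect a genuine obstacle: the argument is essentially a rerun of the discrete proofs once Lemma \ref{lem: properties of topological components} and Proposition \ref{prop:utn00 in topological case} are in hand. The only points requiring a little attention are checking that the hypotheses of the topological $\bar G^{00}_{\topo}$ machinery apply to $\UT_n(R)$ (it is a topological group and all its open sets are $0$-definable in $\Lsetp{R}$), that $\UT_n(\RR)^{00}_{\topo}$ in Proposition \ref{prop:utn00 in topological case} is exactly the subgroup appearing in that machinery, and the well-definedness of the multiplication on $H$ --- all of which are already handled elsewhere in the paper.
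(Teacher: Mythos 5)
Your proposal is correct and follows exactly the route the paper takes: it invokes the $\bar G^{00}_{\topo}$ machinery from \cite{krupil} (and \cite{GPP}) to identify the Bohr compactification with $\UT_n(\RR)/\UT_n(\RR)^{00}_{\topo}$, substitutes Proposition \ref{prop:utn00 in topological case} for Proposition \ref{prop:utn00}, and then reruns the homomorphism-and-continuity argument from the proof of Proposition \ref{prop:utn00quot}. This is precisely what the paper does, as it simply states that the proof is the same as that of Proposition \ref{prop:utn00quot} after this substitution.
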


In order to state similar results for $\T_n(\RR)$, we need to and do assume that the group of units $(R^*,\cdot)$ is topological with the topology induced from $R$.
	As in the discrete case, to state the results for the group $\T_n(\RR)$, we need to define another non-decreasing sequence $I_i'(\RR)$, $i\in\N_{> 0}$, of $0$-type-definable subgroups of $\RRa$ as follows: $I_1'(\RR)$ is the smallest $0$-type-definable subgroup of $\RRa$ which contains $\RRaT_{\topo}$ and which is closed under multiplication by $\RR^*$ from both left and right; for $i > 0$, $I_{i+1}'(\RR)$ is the smallest $0$-type-definable subgroup of $\RRa$ that contains the set $\RR \cdot I_i'(\RR) \cdot \RR^*$ and that is closed under multiplication by $\RR^*$ from both left and right.
And again, the comments right after the definition of $I_{i,A}'$ in Subsection  \ref{sec:unig} have their obvious counterparts. In particular, 
$I_i(\RR) \subseteq I_i'(\RR) \subseteq  \RRiT_{\topo}$ for all $i$. 

Using Lemma \ref{lem: properties of topological components}, one can easily check that the proof of Lemma \ref{lem:utninv}(ii) adapts to the present context, so we get the following variants of Propositions \ref{prop:tn00} and \ref{prop:tn00quot}.

	\begin{proposition}
		\label{prop:tn00topo}
		\[\T_n(\RR)^{00}_{\topo}  =
		\begin{pmatrix}
		(\RR^*,\cdot)_{\topo}^{00} & I_1' & I_2' & \ldots & I_{n-2}' & I_{n-1}' \\
		0 & (\RR^*,\cdot)_{\topo}^{00} & I_1' & \ldots & I_{n-3}' & I_{n-2}' \\
		0 & 0 & (\RR^*,\cdot)_{\topo}^{00} & \ldots & I_{n-4}' & I_{n-3}' \\
		\vdots & \vdots & \vdots & \ddots & \vdots & \vdots \\
		0 & 0 & 0 & \ldots & (\RR^*,\cdot)_{\topo}^{00} & I_1' \\
		0 & 0 & 0 & \ldots & 0 & (\RR^*,\cdot)_{\topo}^{00} \\
		\end{pmatrix}.\]
	\end{proposition}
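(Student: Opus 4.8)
The plan is to induct on $n$, in complete parallel with the proof of Proposition~\ref{prop:tn00}, replacing Lemma~\ref{lem:twosemi} by Lemma~\ref{lem: properties of topological components}(ii) and Lemma~\ref{lem:utninv}(ii) by its topological counterpart. Recall from Subsection~\ref{subsection: linear algebra} that $\T_{n+1}(R) \cong \left(\T_n(R) \times (R^*,\cdot)\right) \ltimes_{\phi_1}^{\phi_2} (R,+)^n$, with $\phi_1(A)(v) = Av$ and $\phi_2(r)(v) = vr$. Since we work in $\Lsetp{R}$, all subsets of all the groups involved are $0$-definable, and the actions $\phi_1$, $\phi_2$ are continuous because $R$ is a topological ring and $(R^*,\cdot)$ carries the subspace topology by our standing assumption. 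Thus the hypotheses of Lemma~\ref{lem: properties of topological components}(ii) are met, and it yields
\[\T_{n+1}(\RR)^{00}_{\topo} = \left(\T_n(\RR)^{00}_{\topo} \times (\RR^*,\cdot)_{\topo}^{00}\right) \ltimes_{\phi_1}^{\phi_2} N',\]
where $N' \leq (\RR,+)^n$ is the smallest $0$-type-definable, bounded index subgroup containing $\mu_N = \mu^n$ and invariant under the actions of both $\T_n(\RR)$ and $\RR^*$.

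The main step is to show $N' = I_n'(\RR) \times \cdots \times I_1'(\RR)$, which is a verbatim adaptation of the proof of Lemma~\ref{lem:utninv}(ii). With $S_k$ as in that proof, I would prove $N' \cap S_k \supseteq I_k'(\RR) \times \cdots \times I_1'(\RR)$ by induction on $k$: for $k=1$, $N' \cap S_1$ is a $0$-type-definable, bounded index subgroup of $(\RR,+)$ which contains $\mu_N \cap S_1 = \mu$ and is closed under left and right multiplication by $\RR^*$, hence contains $I_1'(\RR)$ by the very definition of $I_1'(\RR)$; the inductive step runs exactly as in Lemma~\ref{lem:utninv}(ii), producing elements of $\RR \cdot I_k'(\RR) \cdot \RR^*$ in the relevant coordinate and invoking again the definition of $I_{k+1}'(\RR)$. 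For the reverse inclusion, $I_n'(\RR) \times \cdots \times I_1'(\RR)$ is $0$-type-definable, of bounded index, contains $\mu^n = \mu_N$ (each factor contains $I_1'(\RR) \supseteq \RRaT_{\topo} \supseteq \mu$), is invariant under right multiplication by $\RR^*$, and is invariant under the action of $\T_n(\RR)$ by the same computation with $\RR$-linear combinations of lower-index coordinates as in Lemma~\ref{lem:utninv}(ii), using $I_i'(\RR) \supseteq \RR^* I_i'(\RR) + \RR I_{i-1}'(\RR) + \cdots + \RR I_1'(\RR)$; minimality of $N'$ then forces the inclusion.

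Feeding the computed $N'$ back into the semidirect decomposition gives, with the notation of Proposition~\ref{prop:tn00},
\[\T_{n+1}(\RR)^{00}_{\topo} \cong \QM{\T_n(\RR)^{00}_{\topo}}{\begin{array}{c} I_n' \\ \vdots \\ I_1' \end{array}}{0}{(\RR^*,\cdot)_{\topo}^{00}},\]
where the isomorphisms are the obvious ones making the first and last groups literally equal; starting from the base case $\T_1(\RR)^{00}_{\topo} = (\RR^*,\cdot)_{\topo}^{00}$, the induction yields the claimed matrix form.

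I do not expect a genuine obstacle here; the only thing requiring a little care is the matching, in the $k=1$ base case, of the ``smallest subgroup containing $\mu_N$'' appearing in Lemma~\ref{lem: properties of topological components}(ii) with the ``smallest $0$-type-definable, bounded index subgroup containing $\RRaT_{\topo}$ and closed under $\RR^*$-multiplication'' from the definition of $I_1'(\RR)$ --- these coincide because $\RRaT_{\topo}$ is itself the smallest $0$-type-definable, bounded index subgroup of $(\RR,+)$ containing $\mu$, and one must keep track that the relevant infinitesimal group in the coordinate $S_1$ is $\mu$ (the infinitesimals of $R$), not $\mu_N$. Everything else is a mechanical transcription of the discrete argument with Lemma~\ref{lem: properties of topological components} in place of Lemma~\ref{lem:twosemi}.
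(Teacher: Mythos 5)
Your proposal is correct and follows exactly the route the paper indicates: the paper itself gives no explicit proof, simply remarking that the argument of Lemma~\ref{lem:utninv}(ii) adapts to the topological setting via Lemma~\ref{lem: properties of topological components}, after which one runs the same inductive semidirect-product argument as in Proposition~\ref{prop:tn00}. Your careful note about matching the ``smallest subgroup containing $\mu_N$'' in the $k=1$ step with the definition of $I_1'(\RR)$ via $\RRaT_{\topo}$ is exactly the one point where the adaptation requires attention, and you handle it correctly.
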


	The group operation in the result below uses the identifications analogous to those discussed before Proposition \ref{prop:utn00quot}:
	
	\begin{proposition}
		\label{prop:tn00quottopo}
		The Bohr compactification of the topological group $\T_n(R)$ is
		\[\T_n(\RR)/\T_n(\RR)^{00}_{\topo} \cong
		\begin{pmatrix}
		P & B/I_1' & B/I_2' & \ldots & B/I_{n-2}' & B/I_{n-1}' \\
		0 & P & B/I_1' & \ldots & B/I_{n-3}' & B/I_{n-2}' \\
		0 & 0 & P & \ldots & B/I_{n-4}' & B/I_{n-3}' \\
		\vdots & \vdots & \vdots & \ddots & \vdots & \vdots \\
		0 & 0 & 0 & \ldots & P & B/I_1' \\
		0 & 0 & 0 & \ldots & 0 & P \\
		\end{pmatrix},\]
		where $P: = (\RR^*,\cdot)/(\RR^*,\cdot)_{\topo}^{00}$ is the Bohr compactification of the topological group $(R^*,\cdot)$, $B: = \RRa$, and $\cong$ is a topological group isomorphism, with the right hand side equipped with the product topology induced from the logic topologies on the quotients $B/I_i'$.
	\end{proposition}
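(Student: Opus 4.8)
The plan is to reproduce, almost verbatim, the passage from Proposition~\ref{prop:tn00} to Proposition~\ref{prop:tn00quot}, now using Proposition~\ref{prop:tn00topo} in place of Proposition~\ref{prop:tn00} and the topological components $\bar G^{00}_{\topo}$ in place of the $\bar G^{00}_A$'s. First I would record the inputs that make the statement meaningful. Since $R$ is a (unital) topological ring and, by assumption, $(R^*,\cdot)$ carries the induced topology, the group $\T_n(R)$ with the product topology is a topological group all of whose open subsets are $0$-definable in $\Lsetp{R}$; hence, by the description recalled at the beginning of Subsection~\ref{subsection:topological} (Proposition~2.1 of \cite{GPP}, Fact~2.4 and Proposition~2.5 of \cite{krupil}), $\T_n(\RR)^{00}_{\topo}$ is a normal, bounded index, $0$-type-definable subgroup of $\T_n(\RR)$, and the quotient map $\T_n(R)\to\T_n(\RR)/\T_n(\RR)^{00}_{\topo}$, with the logic topology on the target, is the Bohr compactification of the topological group $\T_n(R)$. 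The same remark applied to $(R^*,\cdot)$ shows that $P:=(\RR^*,\cdot)/(\RR^*,\cdot)_{\topo}^{00}$ is its Bohr compactification. So it remains to identify the compact group $\T_n(\RR)/\T_n(\RR)^{00}_{\topo}$, as a topological group, with the matrix group $H$ on the right hand side.

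Put $B:=\RRa$. Define $F\colon\T_n(\RR)\to H$ by sending a matrix $[a_{ij}]\in\T_n(\RR)$ to the matrix whose $(i,i)$-entry is the coset $a_{ii}(\RR^*,\cdot)_{\topo}^{00}\in P$, whose $(i,j)$-entry for $i<j$ is the coset $a_{ij}+I'_{j-i}\in B/I'_{j-i}$, and whose entries below the diagonal are $0$. As in the discrete case, $F$ is nothing but the quotient map $\T_n(\RR)\to\T_n(\RR)/\T_n(\RR)^{00}_{\topo}$ followed by the labelling of a coset $[a]\cdot\T_n(\RR)^{00}_{\topo}$ by the tuple of its entrywise cosets: by Proposition~\ref{prop:tn00topo} this labelling is well defined and injective, because multiplying $[a]$ on the right by a matrix in $\T_n(\RR)^{00}_{\topo}$ alters the $(i,j)$-entry only within $I'_{j-i}$ (respectively the $(i,i)$-entry only within $a_{ii}(\RR^*,\cdot)_{\topo}^{00}$), which rests on the inclusions $\RR\cdot I'_i\subseteq I'_{i+1}$ and $I'_i\subseteq I'_{i+1}$ together with the closure of each $I'_i$ under multiplication by $\RR^*$ on both sides --- precisely the properties built into the recursive definition of the sequence $I'_i$. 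With these coset identifications the group operation on $H$ is ordinary matrix multiplication, so $F$ is a surjective group homomorphism, and again by Proposition~\ref{prop:tn00topo} its kernel is exactly $\T_n(\RR)^{00}_{\topo}$. Thus $F$ induces an abstract group isomorphism $\T_n(\RR)/\T_n(\RR)^{00}_{\topo}\cong H$.

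It remains to upgrade this to a homeomorphism, which goes exactly as in the proof of Proposition~\ref{prop:utn00quot}. The left hand side is compact Hausdorff, being the quotient of the monster model by a normal, type-definable, bounded index subgroup equipped with the logic topology; and $H$ is compact Hausdorff as a finite product of the compact groups $P$ and $B/I'_i$ (each of $(\RR^*,\cdot)_{\topo}^{00}$ and $I'_i$ being type-definable of bounded index). Hence it suffices to check that the bijection induced by $F$ is continuous, and for that it is enough that the $F$-preimage of every subbasic closed subset of $H$ --- the set of matrices whose fixed $(i,j)$-entry lies in a prescribed closed subset of $B/I'_{j-i}$, respectively of $P$ --- is type-definable in $\T_n(\RR)$, which is immediate. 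The one step I expect to need any care --- and, I believe, the reason the paper presents Propositions~\ref{prop:tn00topo} and \ref{prop:tn00quottopo} merely as ``variants'' of the discrete ones --- is checking that Lemma~\ref{lem: properties of topological components} legitimately substitutes for Lemma~\ref{lem:twosemi} throughout the chain of arguments (so that Proposition~\ref{prop:tn00topo} indeed holds) and that the coset bookkeeping describing the operation on $H$ survives the replacement of $\bar G^{00}_A$ by $\bar G^{00}_{\topo}$; both points are routine given the results already established, so the proof introduces no new idea.
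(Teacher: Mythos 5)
Your overall strategy is the right one and is exactly what the paper intends (it gives no separate proof of this proposition, referring instead to the arguments for Propositions~\ref{prop:utn00quot} and \ref{prop:tn00quot} with Proposition~\ref{prop:tn00topo} in place of \ref{prop:utn00}). However, your justification that the entrywise labelling of cosets is well defined is incomplete. When you compute the effect of right-multiplying $[a]\in\T_n(\RR)$ by $[b]\in\T_n(\RR)^{00}_{\topo}$ on the $(i,j)$-entry with $i<j$, you get
\[
[ab]_{ij}-a_{ij}=a_{ii}b_{ij}+\sum_{i<k<j}a_{ik}b_{kj}+a_{ij}\bigl(b_{jj}-1\bigr).
\]
The first two groups of terms are handled by exactly the properties you list ($I'_m$ closed under $\RR^*$-multiplication, and $\RR\cdot I'_m\subseteq I'_{m+1}\subseteq I'_{j-i}$), but the last term $a_{ij}(b_{jj}-1)$ is not: it requires $\RR\cdot\bigl((\RR^*,\cdot)_{\topo}^{00}-1\bigr)\subseteq I'_{j-i}$, and this inclusion is nowhere among ``the properties built into the recursive definition of the sequence $I'_i$.'' It is true, but one must argue for it separately, for instance by using normality of $\T_n(\RR)^{00}_{\topo}$: for $v\in(\RR^*,\cdot)_{\topo}^{00}$, $c\in\RR$, the commutator of the diagonal matrix with $v$ in the $(j,j)$-slot (which lies in $\T_n(\RR)^{00}_{\topo}$) with $I+cE_{ij}\in\T_n(\RR)$ is $I+c(v^{-1}-1)E_{ij}$ and must again lie in $\T_n(\RR)^{00}_{\topo}$, forcing $c(v^{-1}-1)\in I'_{j-i}$; since inversion is a bijection of $(\RR^*,\cdot)_{\topo}^{00}$ this gives the needed inclusion. (Equivalently, this drops out of Lemma~\ref{lem: properties of topological components}(ii) plus normality of $\G^{00}_{\topo}$.) Once this point is supplied, the rest of your argument --- surjectivity, identification of the kernel via Proposition~\ref{prop:tn00topo}, and the compactness argument for the homeomorphism --- goes through exactly as in Proposition~\ref{prop:utn00quot} and matches the paper's intended proof.
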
	

We expect that the following variant of $(\dagger)$ is true for every topological ring:
	\[I_i(\RR) = \RRiT_{\topo} \text{ for all } i \geq 2.\eqno{(\dagger\dagger)}\]
Using Lemma \ref{lemma: mu + 00 is an ideal}, one can show that $(\dagger\dagger)$ would follow from a positive answer to Question \ref{question: to be answered in the next paper}. This will be discussed in the forthcoming paper. For now, notice that if $R$ satisfies $(\dagger\dagger)$, then our formulas for the Bohr compactifications of the topological groups $\UT_n(R)$ and $\T_n(R)$ obtained in Propositions \ref{prop:utn00quot in topological case} and \ref{prop:tn00quottopo} simplify in the same manner as in Corollary \ref{cor:utn00quot with dagger} but with each definable Bohr compactification replaced by the (topological) Bohr compactification.

	\begin{example}\label{example: top. field}
		Let $R = K$ be a topological field (e.g. $\R$). Then $\bar{K} \cdot (\bar{K}, +)^{00}_{\topo} = \bar{K}$, and so for all $i > 1$, $I_{i}(\bar{K}) = \bar{K}$. Let $Q := (\bar{K},+)/(\bar{K}, +)^{00}_{\topo}$, i.e. the Bohr compactification of the topological group $(K,+)$. By Proposition \ref{prop:utn00quot in topological case}, the Bohr compactification of the topological group $\UT_n(K)$ is
		\[
		\UT_n(\bar{K})/\UT_n(\bar{K})^{00}_{\topo} \cong
		\begin{pmatrix}
		1 & Q & 0 & \ldots & 0 & 0 \\
		0 & 1 & Q & \ldots & 0 & 0 \\
		0 & 0 & 1 & \ldots & 0 & 0 \\
		\vdots & \vdots & \vdots & \ddots & \vdots & \vdots \\
		0 & 0 & 0 & \ldots & 1 & Q \\
		0 & 0 & 0 & \ldots & 0 & 1 \\
		\end{pmatrix} \cong Q^{n-1}.\]
Let $P:= (\bar K^*,\cdot)/(\bar K^*, \cdot)^{00}_{\topo}$, i.e. the Bohr compactification of the topological group $(K^*,\cdot)$.
Since  $\bar{K}^* \cdot (\bar{K}, +)^{00}_{\topo} = \bar{K}$, and so for all $i \geq 1$, $I_{i}'(\bar{K}) = \bar{K}$, by Proposition \ref{prop:tn00quottopo}, the Bohr compactification of the topological group $\T_n(K)$ is
\[ \T_n(\bar{K})/\T_n(\bar{K})^{00}_{\topo} \cong
\begin{pmatrix}
		P & 0 & 0 & \ldots & 0 & 0 \\
		0 & P & 0 & \ldots & 0 & 0 \\
		0 & 0 & P & \ldots & 0 & 0 \\
		\vdots & \vdots & \vdots & \ddots & \vdots & \vdots \\
		0 & 0 & 0 & \ldots & P & 0 \\
		0 & 0 & 0 & \ldots & 0 & P \\
		\end{pmatrix} \cong P^{n}.\]
\end{example}


	\section*{Acknowledgements}
	We would like to thank the anonymous referee for careful reading of the original manuscript and all the suggestions.

	\bibliographystyle{alpha-abbrvsort}
	\bibliography{bohr9}
\end{document}